\numberwithin{equation}{section}
\newtheorem{theorem}{Theorem}[section]
\newtheorem{lemma}[theorem]{Lemma}
\newtheorem{obs}[theorem]{Observation}
\newtheorem{clobs}{Observation}[theorem]
\newtheorem{lclaim}{Claim}[theorem]
\newtheorem{ex}[theorem]{Example}
\newtheorem{prob}[theorem]{Problem}
\newtheorem{claim}[theorem]{Claim}
\newtheorem{sclaim}{Subclaim}[theorem]
\newtheorem{conj}[theorem]{Conjecture}
\newtheorem*{rep@theorem}{\rep@title}
\newcommand{\newreptheorem}[2]{%
\newenvironment{rep#1}[1]{%
 \def\rep@title{#2 \ref{##1}}%
 \begin{rep@theorem}}%
 {\end{rep@theorem}}}
\DeclareMathOperator{\ih}{IH}
\DeclareMathOperator{\cf}{cf}
\theoremstyle{definition}
\newtheorem{definition}[theorem]{Definition}
\newtheorem{ldefinition}{Definition}[theorem]
\theoremstyle{remark}
\newif\ifdeveloping
\newif\ifcommented
\newcommand{\NN}{\mathbb{N}}
\newcommand{\lkd}{\omega\text{-unseparable}}
\newcommand{\smf}{\hspace{0.008 cm}^\smallfrown}
\newcommand{\omg}{{\omega_1}}
\newcommand{\halff}[1]{H_{#1,#1}}
\newcommand{\sat}[1]{{#1}\text{-unseparable}}
\newcommand{\cent}[2]{(#1,#2)\text{-centered}}
\newcommand{\den}[2]{(#1,#2)\text{-dense}}
\newcommand{\utak}[2]{\spadesuit_{#1,#2}}
\newcommand{\utakk}[1]{\spadesuit_{#1}}
\newcommand{\mc}[1]{\mathcal{#1}}
\newcommand{\mbb}[1]{\mathbb{#1}}
\newcommand{\mb}[1]{\mathbf{#1}}
\newcommand{\uhp}{\upharpoonright}
\newcommand{\gr}{G=(V,E)}
\newcommand{\setm}{\setminus}
\newcommand{\subs}{\subset}
\newcommand{\ran}{\operatorname{ran}}
\def\<{\left\langle}
\def\>{\right\rangle}
\def\br#1;#2;{\bigl[ {#1} \bigr]^ {#2} }
\author[D. T. Soukup]{D\'aniel T. Soukup}
\thanks
  {
   } 
\date{\today}
\address{Mathematics \& Statistics, 
University of Calgary
612 Campus Place N.W.
Calgary, AB, Canada
T2N 1N4}
\email{daniel.soukup@ucalgary.ca}
\urladdr{http://www.renyi.hu/$\sim  $dsoukup}
\subjclass[2010]{05C63, 05C70}
\keywords{graph partition, monochromatic path, infinite complete graph, edge colouring}
\title[Monochromatic path decompositions]
   {Decompositions of edge-coloured infinite complete graphs into monochromatic paths II}
\date{\today}
\begin{document}

 \begin{abstract}
We prove that given an edge colouring of an infinite complete graph with finitely many colours, one can partition the vertices of the graph into disjoint monochromatic paths of different colours. This answers a question of R. Rado from 1978.

\end{abstract}
\maketitle

\section{Introduction}

A \emph{path} in a  graph $\gr$ is a 1-1 sequence of vertices $v_0,v_1,v_2\dots$ such that consecutive pairs of vertices form an edge. Now, suppose that one colours the edges of $G$ with finitely many colours, i.e. consider an arbitrary map $c:E\to r$ for some finite $r\geq 1$. Then it makes sense to talk about \textit{monochromatic paths}, i.e. paths $v_0,v_1,v_2\dots$ so that all the edges $\{v_i,v_{i+1}\}$ are coloured identically.

P. Erd\H os proved that if the edges of the complete graph on $\mbb N$ are coloured with two colours, then the vertices can be partitioned into two monochromatic paths of different colours. In this situation, we say that the paths cover $G$. This result appeared in a paper of R. Rado \cite{R} along with a significant generalization: 

\begin{theorem}[{\cite[Theorem 2]{R}}]\label{radostrong} Suppose that $\gr$ is an infinite graph,  $A\subseteq V$ is countable and $$|\{v\in V: \{u,v\}\notin E\}|<|V|$$ for every $u\in A$. If the edges of $G$ are coloured with finitely many colours then $A$ is covered by the vertex set of finitely many disjoint monochromatic paths of different colours.

\end{theorem}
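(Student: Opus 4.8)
The plan is to argue by induction on the number of colours $r=|\ran(c)|$, constructing the covering paths by a greedy recursion that processes the countably many vertices of $A$ one at a time. For the base case $r=1$ every edge carries the same colour, so it suffices to thread a single (automatically monochromatic) path through $A$: enumerating $A=\{a_n:n<\oo\}$ and writing $N(u)$ for the neighbourhood of $u$, the hypothesis makes each $N(a_n)$ have complement of size $<|V|$, so any two $a_n,a_{n+1}$ have $|V|$ many common neighbours. Connecting consecutive $a_n$ by the direct edge when it exists and through a \emph{fresh} common neighbour otherwise yields the path; freshness is available because at each finite stage only finitely many vertices have been used while $|V|$ many remain. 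For the inductive step one uses that if the construction ever needs only $<r$ colours we finish by the inductive hypothesis, so the real work is the case where all $r$ colours are genuinely in play.

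For that case I would maintain, after stage $n$, pairwise disjoint monochromatic paths $P_i$ (one per colour $i$ used so far), each with a distinguished \emph{active endpoint} lying in $A$, together covering $\{a_0,\dots,a_{n-1}\}$. Keeping endpoints inside $A$ is the key invariant, since it is exactly what allows the degree hypothesis to be reused at every stage; an endpoint outside $A$ could have $|V|$ many non-neighbours. The target structure is the one recorded in the commented claim: each colour-$i$ path alternates vertices of $A$ (even positions) with connecting hubs (odd positions), each hub joined in colour $i$ to the $A$-vertex before and after it. Incorporating the next vertex $a_n$ then amounts to finding a colour $i$ and a single fresh hub $w$ with $c(e_i w)=c(w a_n)=i$, or else using the direct edge $e_i a_n$ when it already has colour $i$, or starting $P_i$ from $a_n$ when $P_i$ is still empty and $i$ is a majority colour of $a_n$.

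To produce these attachments I would exploit the countability of $A$ together with the degree hypothesis. At stage $n$ the finitely many current endpoints $e_1,\dots,e_k$ and $a_n$ all lie in $A$, so their common neighbourhood is co-small and hence of size $|V|$; assigning to each common neighbour $w$ the colour vector $(c(w a_n),c(w e_1),\dots,c(w e_k))\in r^{k+1}$ and applying the pigeonhole principle yields $|V|$ many \emph{interchangeable} hubs realising one fixed pattern. These clones give enough room both to keep all hubs fresh, so that the paths stay disjoint, and to perform \emph{transfer moves} generalising Erd\H{o}s's two-colour swap: when $a_n$ cannot simply be appended in any colour, I would re-route an endpoint from one path to another through a clone hub so as to liberate a colour in which $a_n$ does attach, each time restoring the endpoint-in-$A$ invariant. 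A final bookkeeping argument verifies that every vertex is placed permanently after finitely many transfers, so that the countable union of the stagewise paths is a genuine family of disjoint monochromatic paths of distinct colours covering $A$.

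The hard part will be showing that the recursion never gets stuck, i.e.\ that at every stage some colour admits a monochromatic connection from its endpoint to $a_n$. This is precisely a monochromatic connectivity problem: two vertices of $A$ may each have $|V|$ many colour-$i$ neighbours and yet share none, so no single colour need work and the correct colour must be selected by a global analysis of the majority colours and of the colour-$i$ connected components, which is exactly where the inductive hypothesis and the transfer gadget must be combined. I also expect the cardinal arithmetic behind ``$<|V|$'' to be delicate, in particular the case $\cf(|V|)=\oo$ where co-small sets are not closed under countable unions; I would neutralise this by reflecting the whole configuration into a suitable elementary submodel containing $A$, $G$ and $c$, reducing the overlap and freshness questions to a fully controlled countable setting before running the construction.
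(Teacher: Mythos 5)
There is a genuine gap, and you have in fact pointed at it yourself: the claim that ``the recursion never gets stuck'' is exactly the content of the theorem, and your proposal defers it to an unspecified ``global analysis of the majority colours and of the colour-$i$ connected components'' combined with ``transfer moves''. For $r=2$ the transfer/swap argument of Erd\H{o}s closes by a short case analysis, but for general $r$ no such local repair is known to terminate, and your pigeonhole step does not supply the coherence you need: fixing the colour vector of $|V|$-many common neighbours of $\{a_n,e_1,\dots,e_k\}$ gives one pattern realised often, but nothing forces that pattern to contain an index $i$ with $c(w\,a_n)=c(w\,e_i)=i$, and a ``majority colour'' assignment is not transitive --- two vertices each having $|V|$-many colour-$i$ neighbours may share none, so vertices you would both like to route through colour $i$ need not be colour-$i$ connectable at all. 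The missing idea, which is how the paper (following \cite{monopathI} and Rado) actually proceeds, is Lemma \ref{uftrick}: fix a single non-principal ultrafilter $U$ on $V$ containing every $N_G[F]$ for $F\in[A]^{<\omega}$, and assign to each $v\in A$ the unique colour $i$ with $N(v,i)\in U$. This partitions $A$ into classes $A_i$ such that any \emph{finite} subset of $A_i$ has a $U$-large, hence infinite, common colour-$i$ neighbourhood, i.e.\ each $A_i$ is $\omega$-unseparable in colour $i$. Once that global partition is in hand, the greedy threading you describe runs separately inside each class with fresh hubs and no transfers ever needed, which is the content of the simultaneous path-building lemma the paper relies on.

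Two smaller points. Your base case $r=1$ is fine. Your worry about $\cf(|V|)=\omega$ is a non-issue and the elementary-submodel detour is unnecessary: at every stage only finitely many neighbourhood constraints are active, a finite union of sets of size $<|V|$ has size $<|V|$ regardless of cofinality, and all the construction needs is that each $N_G[F]$ for finite $F\subseteq A$ is infinite.
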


In particular,  Erd\H os' result extends from two colours to an arbitrary finite number of colours. 

Theorem \ref{radostrong} was the starting point of several papers in the past which dealt with similar path decomposition problems either on finite or countably infinite graphs; see \cite{EGyP, Gy, GyS1, Po2, monopathI}. While it is easy to see that every 2-edge coloured finite complete graph is the union of two disjoint monochromatic paths (of different colours) the corresponding question for more colours is significantly harder. Indeed, it was shown only recently by A. Pokrovskiy that every 3-edge coloured finite complete graph is the union of 3 disjoint monochromatic paths (of not necessarily different colours) \cite{Po2}. The case of 4 colours is still completely open; we mention that the currently known best upper bound is given by the following theorem of A. Gy{\'a}rf{\'a}s, M. Ruszink{\'o}, G. N. S{\'a}rk{\"o}zy and E. Szemer{\'e}di:

\begin{theorem}[\cite{bestbound}]\label{bestbound} For every integer $r\geq 2$ there is $n_0(r)\in \mbb N$ such that if $n\geq n_0(r)$ then every $r$-edge coloured copy of $K_n$ can be partitioned into at most $100r\log(r)$ monochromatic cycles.
\end{theorem}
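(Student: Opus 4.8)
The plan is to follow the now-standard ``regularity plus connected matchings'' strategy, since the statement is about large finite complete graphs. First I would apply a multicolour version of Szemer\'edi's Regularity Lemma to the $r$-coloured copy of $K_n$ (applying it simultaneously to all $r$ colour classes) to obtain a partition of the vertices into a bounded number $M=M(r,\epsilon)$ of clusters together with a small exceptional set. From this I build the \emph{reduced graph} $R$ on the clusters: I join two clusters by an edge of colour $i$ whenever the pair is $\epsilon$-regular and has density at least, say, $1/(2r)$ in colour $i$, choosing one majority colour per edge. Since almost all pairs are regular, $R$ is essentially a complete graph on $M$ vertices carrying an $r$-colouring, and the whole problem is transported to $R$: it suffices to partition the vertices of $R$ into few monochromatic \emph{connected matchings}, i.e.\ matchings whose edges all lie in a single monochromatic connected component.

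The reason connected matchings are the right object is the \emph{lifting step}, which I would carry out next. Given a connected matching $\{A_1B_1,\dots,A_kB_k\}$ of colour $i$ in $R$, each matched pair $(A_j,B_j)$ is a dense regular pair and hence contains a colour-$i$ path covering all but an $\epsilon$-fraction of $A_j\cup B_j$; because the clusters lie in one colour-$i$ component of $R$, I can splice these paths together through short connecting colour-$i$ paths and close the result into a single long monochromatic cycle covering almost all vertices of $\bigcup_j (A_j\cup B_j)$. Thus each connected matching in $R$ becomes one monochromatic cycle in $K_n$, and the vertices left over by regularity (the exceptional set, the regularity slack, and the connectors) are swept into $O(1)$ additional degenerate cycles. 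Consequently the cycle-partition number of $K_n$ is bounded, up to an additive constant, by the number of monochromatic connected matchings needed to partition $V(R)$ — a quantity that must be controlled independently of $n$.

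The heart of the argument is therefore the purely combinatorial statement that every $r$-edge-coloured complete graph can be partitioned into $O(r\log r)$ monochromatic connected matchings, with the bound independent of the number of vertices. To prove this I would argue by induction on the number of colours, peeling off monochromatic connected matchings one colour at a time. The engine is the elementary observation that the vertices left exposed by a \emph{maximum} matching of a colour-$i$ component form a colour-$i$-\emph{independent} set; taking, for a fixed colour $i$, a maximum connected matching inside each colour-$i$ component simultaneously, the totality of exposed vertices spans no colour-$i$ edge at all (two exposed vertices in one component would give an augmenting edge, and exposed vertices in distinct colour-$i$ components are never joined by colour $i$). Hence this remainder is $(r-1)$-coloured and the induction applies to it. Combined with a largeness estimate controlling how many matchings are spent per colour — in the spirit of Gy\'arf\'as's bound that some colour has a component on at least a $1/(r-1)$ fraction of the vertices — this yields a recursion whose solution is $O(r\log r)$, and a careful tracking of constants produces the stated $100\,r\log r$.

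The main obstacle, I expect, is exactly this combinatorial lemma. The subtlety is that its bound must depend only on $r$: a naive greedy cover that removes a $1/r$-fraction of the remaining vertices at each step would cost $\Theta(r\log M)$ matchings, and the cluster count $M$ produced by regularity is an astronomically large function of $r$, so such an argument is worthless here. One is forced into the structural induction above, and the delicate point is to bound the number of connected matchings spent while peeling a single colour, since a colour may split into many small components; keeping this cost under control across all $r$ rounds is what ultimately pins the factor at $\log r$. By comparison the remaining steps are technical rather than conceptual: organising the regularity bookkeeping, absorbing the exceptional set and the slack into $O(1)$ extra degenerate cycles, and checking that the connecting paths needed in the lifting step can always be found inside the relevant monochromatic component.
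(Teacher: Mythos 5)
This statement is not proved in the paper at all: it is quoted verbatim from Gy\'arf\'as--Ruszink\'o--S\'ark\"ozy--Szemer\'edi \cite{bestbound} as background, so there is no in-paper proof to compare against. Your outline does follow the strategy of that reference (regularity, reduced graph, monochromatic connected matchings, lifting), so the architecture is right; the problem is that what you have written is a frame around the theorem rather than a proof of it.

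The genuine gap is the combinatorial core: the claim that the vertices of every $r$-coloured complete graph can be partitioned into $O(r\log r)$ monochromatic connected matchings, \emph{uniformly in the number of vertices}. Your proposed induction does not deliver this. Peeling a colour $i$ by taking a maximum connected matching inside \emph{each} colour-$i$ component costs one connected matching per component, and the number of colour-$i$ components is not bounded by any function of $r$ (in the reduced graph it can be as large as $M(r,\epsilon)$, which you yourself note is astronomical). So the very first peeling round can already spend far more than $r\log r$ matchings. You identify this as ``the delicate point'' but offer no mechanism to resolve it --- yet resolving it is precisely the content of the theorem; everything else in your sketch (regularity bookkeeping, lifting connected matchings to cycles) was already standard before \cite{bestbound} and by itself only reduces the cycle-partition problem to this lemma. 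A correct argument has to exploit more structure, e.g.\ that after removing a maximum matching from a large monochromatic component the exposed part of that component is independent in that colour, combined with a largeness estimate for monochromatic components, arranged so that the number of matchings charged per round is $O(r)$ and the number of rounds is $O(\log r)$; none of that is present. A secondary but real error: the vertices missed by the lifting step (the exceptional set and the regularity slack) form a set of size $\Theta(\epsilon n)$, and since a ``degenerate cycle'' is a single vertex or edge, they cannot be ``swept into $O(1)$ additional degenerate cycles.'' They must instead be absorbed into the long cycles (each exceptional vertex has high degree in some colour into some cluster), which is a separate step you would need to supply.
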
 

On generalizations of Theorem \ref{radostrong} to hypergraphs and powers of path on countably infinite vertex sets, we refer the reader to \cite{monopathI}.



At the very end of \cite{R}, Rado introduced a natural extension of paths to the uncountable setting; see Section \ref{notsec} for the definition. Rado asked if the path decomposition result of Erd\H os concerning the complete graph on $\mathbb N$ extends to \emph{uncountable complete graphs of arbitrary size}. 

The goal of this paper is to answer this question affirmatively by proving the following:

\begin{reptheorem}{maindecomp} Suppose that $c$ is a finite-edge colouring of an infinite graph $G=(V,E)$ which satisfies $$|\{v\in V:\{u,v\}\notin E\}|<|V|$$ for all $u\in V$. Then the vertices of $G$ can be partitioned into disjoint monochromatic paths of different colours.
\end{reptheorem}

The smallest uncountable case of this theorem with two colours was essentially proved by M. Elekes, L. Soukup, Z. Szentmiklóssy and the present author \cite{monopathI}; hence the current paper can be considered a continuation of that project (thus the title of our paper). 

Our paper is structured as follows: we start with introducing notations, basic definitions and stating easy observations in Sections \ref{notsec} and \ref{pathconsec}. The proof of Theorem \ref{maindecomp} is preceded by a series of results on finding monochromatic paths in certain classes of graphs. 

First, let us emphasize Lemma \ref{3line} from Section \ref{sec3line}, where we show that any set of vertices $A$ in a graph $G$ which satisfies three rather simple properties can be covered by a path. Next, we prove two important results: Lemma  \ref{ml1} and \ref{ml2}. These lemmas are used in Section \ref{secdich} to show the existence of large sets satisfying all three conditions of Lemma \ref{3line} in certain finite-edge coloured graphs and hence to show the existence of large monochromatic paths; this is done in Theorem \ref{halff}.


In Section \ref{firstdec}, we prove that there is a large family of bipartite graphs $G$ satisfying that for every finite-edge colouring of $G$, we can cover one class of $G$ with disjoint monochromatic paths of different colours. This is done by putting together several lemmas in Theorem \ref{centeredcover}. Finally, after further preparations in Section \ref{firstdec}, the previous results yield the proof of Theorem \ref{maindecomp} in Section \ref{seconddec}.


We believe that the results of this paper are accessible to a wide audience of combinatorists with minimal background in set theory. Furthermore, we hope that the new combinatorial tricks from this paper can be applied to eventually settle the path decomposition problem for finite complete graphs as well.

\subsection*{Acknowledgements} The research presented in the article was mostly carried out at the University of Toronto and the Alfr\'ed R\'enyi Institute of Mathematics. The paper was finalized at the University of Calgary.

 We thank the authors of \cite{monopathI}, A. Dow and W. Weiss for helpful comments and proofreading at various stages of this long project. We are grateful for the referee's careful reading and several useful comments.

\section{Notations} \label{notsec}

A \emph{graph} is an ordered pair $\gr$ so that $E\subseteq [V]^2$; we will use the notation $V(G),E(G)$ for the vertices and edges of a graph $G$. For a graph $\gr$ we write $$N_G(v)=\{w\in V:\{v,w\}\in E\}$$ for $v\in V$ and $$N_G[F]=\bigcap\{N_G(v):v\in F\}$$ for $F\subseteq V$.

 We say that $H$ is a \emph{subgraph} of $G$ iff $V(H)\subseteq V(G)$ and  $E(H)\subseteq E(G)$.

\subsection*{Paths} Clasically, a \emph{path} in a graph $G$ is a 1-1 sequence of vertices $v_0,v_1,\dots$ such that $\{v_i,v_{i+1}\}\in E(G)$. R. Rado introduced a more general definiton which applies for uncountable sequences of vertices as well.

\begin{definition}[R. Rado, \cite{R}] We say that a graph $P$ is a \emph{path} iff there is a well ordering $\prec$ on $V(P)$ such that $$\{w\in N_P(v):w\prec v\} \text{ is }\prec\text{-cofinal below } v$$ for all $v\in V(P)$.
\end{definition}

Given a graph $P=(V,E)$ and well ordering $\prec$ on $V$ we let $$E_\prec=\{(u,v)\in V^2:u\prec v,\{u,v\}\in E\}.$$

\begin{obs} A graph $P$ is a path witnessed by the well ordering $\prec$ iff for all $v\prec w\in V(P)$  there is an injective map $p:n+2\to V(P)$ for some $n\in \omega$ so that $p(0)=v,p(n+1)=w$ and $(p(j),p(j+1))\in E_{\prec}$ for all $j\leq n$.
\end{obs}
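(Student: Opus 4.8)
The plan is to prove the two implications of the biconditional separately, disposing of the easy $(\Leftarrow)$ direction first and then handling the $(\Rightarrow)$ direction by transfinite induction, which is the only place where the well ordering genuinely does any work.

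For $(\Leftarrow)$, I would assume that every $\prec$-comparable pair of vertices is joined by a $\prec$-increasing finite path of the indicated form, and verify the defining cofinality property of a path. Fixing $v \in V(P)$, I must show that $\{w \in N_P(v) : w \prec v\}$ is cofinal below $v$, so let $x \prec v$ be arbitrary. Applying the hypothesis to the pair $x \prec v$ produces an injective $p : n+2 \to V(P)$ with $p(0) = x$, $p(n+1) = v$, and $(p(j),p(j+1)) \in E_\prec$ for all $j \le n$. The penultimate vertex $u := p(n)$ is then a neighbour of $v$ lying below $v$, and since the path strictly increases in $\prec$ we get $x = p(0) \preceq p(n) = u$. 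Thus $x \preceq u \prec v$ with $u \in N_P(v)$, which is exactly the required cofinality.

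For $(\Rightarrow)$, I would prove by transfinite induction on $w$ along $\prec$ the statement $\Phi(w)$: for every $v \prec w$ there is a $\prec$-increasing finite path from $v$ to $w$ (the case where $w$ is $\prec$-least being vacuous). Assuming $\Phi(w')$ for all $w' \prec w$ and fixing $v \prec w$, the hypothesis that the neighbours of $w$ below $w$ are cofinal below $w$ yields some $u \in N_P(w)$ with $v \preceq u \prec w$. If $u = v$, then the two-term sequence $v, w$ is the required path; otherwise $v \prec u \prec w$, so $\Phi(u)$ supplies a $\prec$-increasing path from $v$ to $u$, to which I append $w$ (legitimate since $(u,w) \in E_\prec$), producing the desired $\prec$-increasing path from $v$ to $w$.

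The one point requiring care is the forward direction: cofinality must be used to select a neighbour $u$ of $w$ with $v \preceq u$ (not merely \emph{some} neighbour below $w$), so that the inductive hypothesis at the strictly smaller vertex $u$ can be invoked to reach the target $v$ exactly. No separate argument for injectivity is needed anywhere, since membership in $E_\prec$ forces each path to be strictly $\prec$-increasing and hence automatically injective.
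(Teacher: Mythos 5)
Your proof is correct and is exactly the argument the paper leaves implicit (the observation is stated without proof): the backward direction reads off the penultimate vertex of an increasing path, and the forward direction is the evident transfinite induction using cofinality to find a neighbour $u$ of $w$ with $v\preceq u\prec w$. You also correctly note the two points that need care, namely choosing $u$ above $v$ (not just any neighbour below $w$) and that membership in $E_\prec$ makes the sequences strictly $\prec$-increasing and hence injective.
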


That is there is a $\prec$-monotone finite path from $v$ to $w$ for all $v\prec w\in V(P)$. In particular, if $w$ is the $\prec$-successor of $v$ in $V(P)$ then $v$ and $w$ are connected by an edge. Also, two vertices are connected by a transfinite path if and only if they are connected by a finite path.

Suppose $P$ is a path witnessed by  $\prec$ and  $(V(P),\prec)$ has order type $\kappa$. Then we let $P\uhp \alpha$ denote the unique induced subgraph of $P$ spanned by the initial segment of $(V(P),\prec)$ of order type $\alpha$ (for any $\alpha<\kappa$). Similarly, if $q\in V(P)$ then let $P\uhp q=P\uhp \alpha$ where $\alpha$ is the $\prec$-order type of $\{p\in V(P):p\prec q\}$.

Suppose that $P,Q$ are paths witnessed by $\prec_P$ and $\prec_Q$. We will say that a path $Q$ \emph{end extends} the path $P$ iff $P\subseteq Q$, $\prec_Q\uhp V(P)=\prec_P$ and $v\prec_Q w$ for all $v\in V(P), w\in V(Q)\setm V(P)$. 

If $R,S$ are two paths so that the first point of $S$ has $\prec_R$-cofinally many neighbours in $R$ then $R\cup S$ is a path which end extends $R$ and we denote this path by $R^\frown S$ emphasizing this relation.\\

\subsection*{Edge colourings} An \emph{r-edge colouring} of a graph $\gr$ is a map $c:E\to r$ where $r$ is some cardinal. We write $c(v,w)$ instead of $c(\{v,w\})$ for an edge $\{v,w\}\in E$ for notational simplicity. A \emph{finite edge colouring} is an $r$-edge colouring for some $r\in \mbb N$; throughout this paper $r$ will denote a non zero natural number. We will use the following notation: given an edge colouring $c$ of a graph $\gr$ let $$N_G(v,i)=\{w\in N_G(v):c(v,w)=i\}$$ for $v\in V$  and $$N_G[F,i]=\bigcap \{N_G(v,i):v\in F\}$$ for $F\subseteq V$ and $i\in\ran c$. As we always work with a single colouring one at a time, this notation will lead to no misunderstanding. If we work with a single graph then occasionally we omit the subscript $G$ as well. 

Let us fix an edge colouring $c$ of $G$ with $r$ colours and $i<r$. If $\mc P$ is a graph property (e.g. being a path, being connected...) and $A\subseteq V$ then we say that  $$A \text{\emph{ has property }} \mc P \text{\emph{ in colour }} i$$ with respect to $c$ iff $A$ has property $\mc P$  in the graph $(V,c^{-1}(i))$. In particular, by a \emph{monochromatic path} in $G$ we mean a subgraph $P$ of  $(V,c^{-1}(i))$ which is a path (for some $i<r$).

Throughout the paper, we use standard set theoretic notations consistent with the literature, e.g. \cite{kunen}.

\section{Paths and connectivity} \label{pathconsec}

It is not surprising that notions of \emph{connectivity} are closely related to paths and they will indeed play an important role in our proofs. Let us introduce some terminology:

\begin{definition} Let $\gr$ be a graph, $\kappa$ a cardinal and let $A\subseteq V$. We say that $A$ is \emph{$\sat \kappa$} iff for any $v\neq w\in A$ there are $\kappa$-many finite paths from $v$ to $w$ in $G$ which only intersect in $v$ and $w$. We say that $A$ is \emph{$\kappa$-connected} iff $A$ is $\sat \kappa$ in the graph induced by $A$.
\end{definition}

The name \emph{$\sat \kappa$} sets is justified by the following

\begin{obs}Suppose that $G$ is a graph, $\kappa$ is a cardinal and $A$ is a set of vertices. Then the following are equivalent:
\begin{enumerate}
 \item $A$ is $\sat \kappa$,
\item any $v\neq w\in A$ are not separated by a set of size $<\kappa$ i.e. if $F$ is a set of vertices of size $<\kappa$ not containing $v,w$ then there is a path $P$ in $G$ from $v$ to $w$ which avoids $F$.
\end{enumerate}

\end{obs}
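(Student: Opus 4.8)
The statement is a Menger-type duality between internally disjoint paths and vertex separators, and the plan is to prove the two implications separately, splitting the nontrivial direction according to whether $\kappa$ is finite or infinite.

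The implication $(1)\Rightarrow(2)$ is a counting argument. Fix $v\neq w\in A$ and a set $F$ of vertices with $|F|<\kappa$ and $v,w\notin F$. By $(1)$ there are $\kappa$-many finite $v$--$w$ paths whose interiors (their vertex sets minus $\{v,w\}$) are pairwise disjoint. Each vertex of $F$ lies in the interior of at most one of these paths, so $F$ meets the interiors of at most $|F|<\kappa$ of them; since there are $\kappa$ paths, at least one has interior disjoint from $F$ and, as $v,w\notin F$, avoids $F$ entirely. This is the path required by $(2)$.

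For $(2)\Rightarrow(1)$ with $\kappa$ infinite I would build the paths by transfinite recursion. Suppose internally disjoint finite $v$--$w$ paths $\{P_\beta:\beta<\alpha\}$ have been constructed for some $\alpha<\kappa$, and set $F_\alpha=\bigcup_{\beta<\alpha}\bigl(V(P_\beta)\setminus\{v,w\}\bigr)$. Each interior is finite and $|\alpha|<\kappa$, so $|F_\alpha|<\kappa$ since $\kappa$ is infinite. By $(2)$ there is a $v$--$w$ path avoiding $F_\alpha$, and by the earlier observation that two vertices joined by a transfinite path are joined by a finite one (applied inside the induced subgraph on $V\setminus F_\alpha$) we may take this path $P_\alpha$ to be finite. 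Its interior is disjoint from $F_\alpha$, hence from every earlier interior; as these earlier interiors are nonempty (here one uses that $v,w$ are non-adjacent, so every $v$--$w$ path is nontrivial), the $P_\beta$ are pairwise distinct and the family stays internally disjoint. After $\kappa$ steps this produces $\kappa$-many internally disjoint finite $v$--$w$ paths, which is $(1)$.

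The case of $(2)\Rightarrow(1)$ with $\kappa$ finite is where the genuine content lies, since the greedy construction breaks down: the accumulated interiors $F_\alpha$ need not have size $<\kappa$. Here I would invoke the classical vertex form of Menger's theorem, valid in arbitrary graphs for finite path counts: if no set of size $<\kappa$ separates $v$ from $w$, then every $v$--$w$ separator has size at least $\kappa$, whence there exist at least $\kappa$ internally disjoint $v$--$w$ paths. This appeal to Menger's theorem is the step I expect to be the main obstacle, being the one point that rests on an external result rather than a direct construction. The only remaining delicacy is the degenerate pair where $v$ and $w$ are adjacent and admit no separator at all; this case must be isolated and treated on its own, since the single edge then furnishes a trivial path whose empty interior is invisible to the counting and recursion above.
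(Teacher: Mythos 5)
The paper states this as an unproved observation, offered only to justify the terminology ``$\sat{\kappa}$'', so there is no proof of record to compare against; your argument is the intended one. The counting argument for $(1)\Rightarrow(2)$ and the greedy transfinite recursion for $(2)\Rightarrow(1)$ when $\kappa$ is infinite are both correct: at stage $\alpha<\kappa$ the union of fewer than $\kappa$ finite interiors still has size $<\kappa$ because $\kappa$ is infinite, and the paper's earlier remark that two vertices joined by a transfinite path are joined by a finite one lets you take each new path finite. Two comments. First, the appeal to Menger's theorem for finite $\kappa$ is legitimate but not load-bearing: every later use of unseparability in the paper has $\kappa\geq\omega$, so the finite case is never invoked. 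Second, the degenerate situation you flag at the end is not merely a delicacy to be ``treated on its own'' --- it is a genuine counterexample to the equivalence as literally stated: if $G$ consists of the single edge $\{v,w\}$ and $A=\{v,w\}$, then $(2)$ holds vacuously for every $\kappa$ (no set omitting $v,w$ separates them), while $(1)$ fails for every $\kappa\geq 2$, since the edge is the only $v$--$w$ path and a path is a graph, so it cannot be counted twice. The observation must therefore be read as implicitly excluding adjacent pairs, or as asserting the equivalence only up to this triviality; under that reading your proof is complete, and under the literal reading no proof could succeed.
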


The following is obvious:

\begin{obs}\label{pathconnobs}Every $\omega$-connected countable graph is a path of order type $\omega$. Every countable $\sat \omega$ set is covered by a path of order type $\leq\omega$.
\end{obs}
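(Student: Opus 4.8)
The plan is to reduce both assertions to a single greedy construction, using first the observation that a path of order type $\omega$ is nothing more than a classical one-way infinite path. Indeed, if $\prec$ has order type $\omega$ then below any non-minimal $v$ the set of predecessors has a $\prec$-maximum, so the requirement that $\{w\in N_P(v):w\prec v\}$ be $\prec$-cofinal below $v$ simply says that the immediate $\prec$-predecessor of $v$ is a neighbour of $v$. Hence witnessing that a graph is a path of order type $\omega$ amounts to producing an enumeration $x_0,x_1,\dots$ of its vertices with $\{x_n,x_{n+1}\}\in E$ for every $n$ (extra edges are harmless).

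The key step is an extension lemma: if $A$ is $\sat{\omega}$ in $G$, if $P$ is a finite path in $G$ whose last vertex $a$ lies in $A$, and if $w\in A\setminus V(P)$, then $P$ end extends to a finite path $P'$ ending at $w$ with $V(P')=V(P)\cup V(Q)$ for some $a$--$w$ connection $Q$. To find $Q$, use that there are $\omega$-many finite $a$--$w$ paths meeting pairwise only in $a$ and $w$; since each vertex of the \emph{finite} set $V(P)\setminus\{a\}$ lies on at most one of them, all but finitely many of these paths avoid $V(P)\setminus\{a\}$, and we pick one such $Q$. Setting $S$ to be $Q$ with its first vertex $a$ deleted, the new first vertex of $S$ is a neighbour of $a$ and hence $\prec_P$-cofinal below it, so $P'=P^\frown S$ is a legitimate end extension covering $w$ and again ending in a vertex of $A$.

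For the second statement, enumerate $A=\{a_n:n<\omega\}$ and build an end-extension chain $P_0\subseteq P_1\subseteq\cdots$ of finite paths in $G$, each ending in a vertex of $A$: let $P_0=(a_0)$, and at stage $n+1$ apply the extension lemma with target $w$ the least-indexed element of $A$ not yet covered by $P_n$ (if $A\subseteq V(P_n)$ already, stop, in which case $A$ is finite and covered by a finite path). Because we always extend towards the least missing element of $A$, every $a_n$ is eventually included, so $P=\bigcup_{n}P_n$ is a path of order type $\leq\omega$ covering $A$, of order type exactly $\omega$ when $A$ is infinite. The first statement is the special case $A=V$, where by $\omega$-connectedness the required internally disjoint connections live in the induced graph $G$ itself; then every vertex of $G$ is a target, so $P$ spans $V$, and since $\omega$-connectedness of two distinct vertices already forces $V$ to be infinite, $P$ has order type exactly $\omega$.

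The only genuine obstacle is the vertex-avoidance in the extension step, and it is dispatched precisely by the definition of $\sat{\omega}$ sets: infinitely many internally disjoint connections between the current endpoint and the target guarantee one that misses the finitely many vertices used so far. The one bookkeeping subtlety is to keep the running endpoint inside $A$ throughout, so that the $\sat{\omega}$ property (which quantifies only over pairs from $A$) stays applicable at each stage; choosing each target $w$ inside $A$ arranges this automatically.
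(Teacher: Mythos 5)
Your proof is correct; the paper states this observation without proof (labelling it obvious), and your greedy end-extension argument — reducing order-type-$\omega$ paths to ordinary one-way infinite paths and using the infinitely many internally disjoint connections to route around the finitely many vertices already used — is exactly the standard construction the paper intends. The only caveat is the degenerate case of a graph with fewer than two vertices, where the first assertion is vacuous/false as literally stated, but this is immaterial.
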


The next lemma describes a method to find connected subsets of edge coloured graphs and was essentially proved in \cite{conn_decomp}.

\begin{lemma}\label{uftrick} Suppose that $\gr$ is a graph, $A\in [V]^\omega$ and $N_G[F]$ is infinite for all $F\in [A]^{<\omega}$. Given any edge
colouring $c: E \to r$ with $r\in \omega$, there is a partition $d_c: V \to r$ and a colour $i_c < r$
so that
$$N_G[F,i]\cap V_{i_c} \text{ is infinite for all } i < r \text{ and finite set } F\subs
A\cap V_i \text{ where }V_i=d_c^{-1}\{i\}.$$ 

In particular, $A\cap V_i$ is $\lkd$ in colour $i$ for all $i < r$ and if $V=A$ then $V_{i_c}$
is $\omega$-connected as well in colour $i_c$. 
\end{lemma}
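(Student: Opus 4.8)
The natural engine here is a single nonprincipal ultrafilter on $V$, which will at once decide, for every $v\in A$, the colour dominating the neighbourhood $N_G(v)$ and single out the ``large'' part of the partition. The plan is to build such a $\mathcal U$, read off $d_c$ and $i_c$ from it, and then let the ultrafilter axioms do all the work.

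First I would construct the ultrafilter. The hypothesis that $N_G[F]$ is infinite for every $F\in[A]^{<\omega}$ says precisely that the family $\{N_G(v):v\in A\}$ has the finite intersection property, and moreover that every finite subintersection $N_G[F]$ is infinite, hence meets every cofinite subset of $V$. Therefore the Fréchet filter together with $\{N_G(v):v\in A\}$ generates a proper filter, which I extend to an ultrafilter $\mathcal U$ on $V$. Since $\mathcal U$ contains all cofinite sets it is nonprincipal, so every member of $\mathcal U$ is infinite; this is the only property of $\mathcal U$ I will use besides it being an ultrafilter containing each $N_G(v)$.

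Next I define the partition. For $v\in A$ we have $N_G(v)\in\mathcal U$ and $N_G(v)=\bigsqcup_{i<r}N_G(v,i)$, so since $\mathcal U$ is an ultrafilter and $r$ is finite there is exactly one colour $i$ with $N_G(v,i)\in\mathcal U$; set $d_c(v):=i$ and extend $d_c$ arbitrarily on $V\setminus A$. Put $V_i=d_c^{-1}\{i\}$. As $V=\bigsqcup_{i<r}V_i\in\mathcal U$ there is a unique $i_c<r$ with $V_{i_c}\in\mathcal U$, and this is the distinguished colour. The main assertion is then immediate: if $F\subseteq A\cap V_i$ is finite then $N_G(v,i)\in\mathcal U$ for each $v\in F$ by the definition of $d_c$, so $N_G[F,i]=\bigcap_{v\in F}N_G(v,i)\in\mathcal U$; intersecting with $V_{i_c}\in\mathcal U$ stays in $\mathcal U$, whence $N_G[F,i]\cap V_{i_c}$ is infinite.

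The two ``in particular'' clauses follow by routing through common neighbours. For distinct $v,w\in A\cap V_i$, applying the above to $F=\{v,w\}$ produces infinitely many common $i$-coloured neighbours $u$, each giving a length-two $i$-monochromatic path $v\,u\,w$; distinct $u$ yield internally disjoint paths, so $A\cap V_i$ is $\sat{\omega}$ in colour $i$. When $V=A$ we have $V_{i_c}=A\cap V_{i_c}$, and here the extra factor $\cap V_{i_c}$ in the statement is exactly what is needed: the infinitely many witnessing neighbours now lie inside $V_{i_c}$, so the connecting paths stay within $V_{i_c}$, giving $\omega$-connectedness of $V_{i_c}$ in colour $i_c$. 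I expect the only genuinely delicate point to be this last step, and it is precisely the reason for demanding $V_{i_c}\in\mathcal U$: plain $\sat{\omega}$-ness needs common neighbours somewhere in $G$, whereas $\omega$-connectedness of $V_{i_c}$ needs them inside $V_{i_c}$. Recognising that one ultrafilter can simultaneously define the partition and make the target class large is the small insight; everything after that is routine ultrafilter bookkeeping.
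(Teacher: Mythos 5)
Your proof is correct and follows essentially the same route as the paper's: a single non-principal ultrafilter containing all the sets $N_G[F]$ for $F\in[A]^{<\omega}$, with $d_c$ and $i_c$ read off from which colour-classes of neighbourhoods lie in the ultrafilter. The paper leaves the verification as "not hard to check"; your write-up simply carries out that check, including the correct observation that intersecting with $V_{i_c}\in\mathcal U$ is what makes the length-two paths land inside $V_{i_c}$ for the $\omega$-connectedness clause.
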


The above lemma was used in \cite{monopathI} to deduce Theorem \ref{radostrong} and we will apply it later as well.

The next example shows that Observation \ref{pathconnobs} cannot be extended (word by word) to the uncountable case.

\begin{ex} There is a graph $G$ which contains no uncountable paths however $N_G[F]$ is uncountable for all finite $F\subseteq V(G)$.
\end{ex}
\begin{proof} Take a partition of $\omega_1$ into uncountable sets $X_F$ with $F\in [\omg]^{<\omega}$. Let $G=(\omg,E)$ with $$E=\{\{\alpha,\beta\}:\alpha\in F,\beta\in X_F\setm (\max F+1),F\in [\omg]^{<\omega}\}.$$ It is clear that  $N_G[F]$ is uncountable for all finite $F\subseteq V(G)$ and $|N_G(\alpha)\cap \alpha|<\omega$ for all $\alpha<\omg$.

The following observation leads to a contradiction if $G$ contains an uncountable path.

\begin{clobs}\label{trail}
 If a graph $G=(\omg,E)$ contains a path of size $\omg$ then there is a club $C\subs \omg$ so that for all $\alpha\in C$ there is $\beta\in C\setm \alpha$ with $$\sup (N_G(\beta)\cap \alpha)=\alpha.$$
\end{clobs}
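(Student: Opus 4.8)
The plan is to read the club off from the well ordering witnessing that $G$ contains an uncountable path, and then to use a reflection argument to convert cofinality in that auxiliary well ordering into cofinality in the usual ordering of $\omega_1$.

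First I would fix an uncountable path $P\subseteq G$ witnessed by a well ordering $\prec$ of $W:=V(P)$. Replacing $P$ by $P\uhp\omega_1$ (which is again a path and a subgraph of $G$, so that $N_P(v)\subseteq N_G(v)$ for every vertex) I may assume that $(W,\prec)$ has order type exactly $\omega_1$; write $\langle p_\eta:\eta<\omega_1\rangle$ for the $\prec$-increasing enumeration of $W$ and let $e(w)$ denote the $\prec$-index of $w\in W$. By the definition of a path, for every $\eta$ the set $\{w\in N_P(p_\eta):w\prec p_\eta\}$ is $\prec$-cofinal in $\{p_\zeta:\zeta<\eta\}$.

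Next I would let $C$ be the collection of all limit ordinals $\alpha<\omega_1$ that are closed in the following three senses: (i) $W\cap\alpha=\{p_\eta:\eta<\alpha\}$, so that the natural initial segment of $W$ below $\alpha$ coincides with the $\prec$-initial segment of length $\alpha$; (ii) $\sup\{e(w)+1:w\in W\cap\gamma\}<\alpha$ for every $\gamma<\alpha$; and (iii) $\sup(W\cap\alpha)=\alpha$. Each of these closure demands cuts out a club (equivalently, $C$ may be obtained as the set of $\delta=M\cap\omega_1$ along a continuous increasing chain of countable elementary submodels $M\ni P,\prec$), so $C$ is a club. Now fix $\alpha\in C$ and put $\beta=p_\alpha$. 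By (i) we have $\beta\notin W\cap\alpha$, hence $\beta\geq\alpha$, and the set $D:=\{w\in N_P(\beta):w\prec\beta\}$ is $\prec$-cofinal in $\{p_\eta:\eta<\alpha\}=W\cap\alpha$; moreover $D=N_P(\beta)\cap\alpha\subseteq\alpha$, since a neighbour of value $<\alpha$ lies in $W\cap\alpha$ and therefore has $\prec$-index $<\alpha=e(\beta)$. I claim $\sup D=\alpha$. Granting this, $N_P(\beta)\subseteq N_G(\beta)$ gives $\sup(N_G(\beta)\cap\alpha)\geq\sup D=\alpha$, while $N_G(\beta)\cap\alpha\subseteq\alpha$ forces the reverse inequality, yielding the required $\sup(N_G(\beta)\cap\alpha)=\alpha$.

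The one substantive point, and the step I expect to be the main obstacle, is precisely proving $\sup D=\alpha$: the path axiom only supplies neighbours that are cofinal in the auxiliary ordering $\prec$, whereas the statement demands cofinality in the usual ordering of $\omega_1$. This is where closure condition (ii) does the work. Since $D$ is $\prec$-cofinal in the set $\{p_\eta:\eta<\alpha\}$ of $\prec$-order type $\alpha$, the index set $\{e(w):w\in D\}$ is cofinal in $\alpha$. If we had $\sup D=\gamma<\alpha$, then $D\subseteq W\cap(\gamma+1)$, and (ii) applied to $\gamma+1<\alpha$ would bound $\{e(w):w\in D\}$ strictly below $\alpha$, contradicting its cofinality in $\alpha$. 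Hence $\sup D=\alpha$, completing the argument. I note that the membership $\beta\in C$, as opposed to merely $\beta\geq\alpha$, is inessential for the subsequent contradiction in the Example (which only uses a vertex $\beta\geq\alpha$ with neighbours cofinal below $\alpha$), and can be secured, if one insists on it, by a routine strengthening of the closure used to define $C$.
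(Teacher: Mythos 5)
Your proof is correct and takes essentially the same route as the paper's one‑line argument: the paper picks a countable elementary submodel $M\ni G,P$, lets $\beta$ be the $\prec_P$-least element of $P\setminus M$, and uses that the $\prec_P$-cofinal set of earlier neighbours of $\beta$ lands inside $M\cap\omega_1$; your closure conditions (i)--(iii) are exactly the trace of such submodels, and your condition (ii) makes explicit the step (upgrading ``infinite subset of $\alpha$'' to ``cofinal in $\alpha$'') that the paper leaves implicit. The point you flag about $\beta\in C$ versus merely $\beta\geq\alpha$ is likewise glossed over in the paper and, as you say, is irrelevant to the contradiction drawn in the Example.
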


 Indeed, take any countable elementary submodel $M$ of $H(\omega_2)$ with $G,P\in M$ and let $\alpha$ denote the $\prec_P$-minimal element of $P\setm M$. Note that $M\cap P$ is an initial segment of $P$ and $\alpha$ must be a $\prec_P$-limit in $P$. Hence, the infinite set $N(\alpha)\cap \{\xi\in \omg:\xi\prec_P \alpha\}$ is contained in $M\cap \omg \subseteq \alpha$ which finishes the proof. 
\end{proof}

However, every uncountable path contains large unseparable sets:

\begin{obs}\label{satobs}
 If $P$ is a path witnessed by an ordering of type $\omg$ then $\{v\in V(P): |N_P(v)|=\omg\}$ is uncountable and $\sat \omg$ in $P$.
\end{obs}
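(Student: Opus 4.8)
The plan is to fix the witnessing well-ordering $\prec$ and enumerate $V(P)=\{v_\alpha:\alpha<\omg\}$ in $\prec$-increasing order, so that the defining property of a path reads: the backward neighbourhood $N_P(v_\alpha)\cap\{v_\beta:\beta<\alpha\}$ is $\prec$-cofinal in $\{v_\beta:\beta<\alpha\}$ for every $\alpha<\omg$. Write $D=\{v\in V(P):|N_P(v)|=\omg\}$. I would prove the two assertions separately, using throughout two facts: that the backward neighbours of $v_\alpha$ are cofinal below $\alpha$ (so for limit $\alpha$ there are infinitely many of them reaching up to $\alpha$), and that by the Observation immediately preceding this one any two $\prec$-comparable vertices are joined by a $\prec$-monotone finite path in $P$.

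For the uncountability of $D$ I would argue by contradiction with a club argument. Suppose $D$ is countable and fix $\gamma_0<\omg$ with $D\subseteq\{v_\beta:\beta<\gamma_0\}$. Then every $v_\beta$ with $\beta\geq\gamma_0$ has countable degree, so $g(\beta):=\sup\{\delta:v_\delta\in N_P(v_\beta)\}<\omg$ is well defined. The set $C$ of limit ordinals $\gamma>\gamma_0$ that are closed under $g$ (meaning $g(\beta)<\gamma$ whenever $\gamma_0\leq\beta<\gamma$) is a club, hence nonempty. Picking $\gamma\in C$, the backward neighbours of $v_\gamma$ are cofinal in $\gamma$, so some backward neighbour $v_\beta$ has $\beta\in[\gamma_0,\gamma)$; but then $v_\gamma\in N_P(v_\beta)$ forces $\gamma\leq g(\beta)<\gamma$, a contradiction. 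Thus $D$ is uncountable.

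For $\sat\omg$ I would verify the separation formulation from the Observation above: no two distinct $v,w\in D$ are separated in $P$ by a countable set $F$. The key point is that a countable $F$ is $\prec$-bounded, say below $\gamma_F<\omg$, and that only countably many vertices lie $\prec$-below $\gamma_F$. Since $v,w\in D$ have uncountable neighbourhoods, each of $v$ and $w$ has a neighbour — call them $v'$ and $w'$, chosen distinct — lying $\prec$-above $\gamma_F$ and outside $F$. Assuming $v'\prec w'$, the preceding Observation furnishes a $\prec$-monotone finite path from $v'$ to $w'$; every vertex on it lies $\prec$-between $v'$ and $w'$, hence at position $\geq\gamma_F$, so the path avoids $F$ and avoids $v,w$ (which sit below $\gamma_F$). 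Splicing the edges $\{v,v'\}$ and $\{w,w'\}$ onto its ends yields a finite path from $v$ to $w$ in $P$ missing $F$; it is $1$–$1$ because $v,w$ and the high vertices are separated by their $\prec$-positions and the monotone path is injective. As $F$ was an arbitrary countable set, $v$ and $w$ are not separated by any countable set, and the Observation's equivalence then delivers the $\omg$ internally disjoint finite paths witnessing that $D$ is $\sat\omg$ in $P$.

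The routine parts are the club computation in the first half and the bookkeeping that the spliced path is genuinely injective. The only point I regard as the crux of the second half is its reliance on the equivalence between unseparability by countable sets and the existence of $\omg$ internally disjoint paths; rather than reprove this Menger-type equivalence, I would simply invoke the Observation stated earlier in this section.
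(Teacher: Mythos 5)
Your proof is correct. The paper states this Observation without proof, treating it as routine, so there is no argument of record to compare against; your write-up supplies exactly the kind of argument the author presumably had in mind. The club/pressing-down contradiction for uncountability of $D=\{v:|N_P(v)|=\omg\}$ is sound, and the reduction of $\sat\omg$ to non-separation by countable sets correctly leans on the two Observations stated earlier in that section (the Menger-type equivalence and the existence of $\prec$-monotone finite paths between comparable vertices). One small patch: you choose $\gamma_F$ only to bound the countable set $F$, but later you use that $v$ and $w$ also ``sit below $\gamma_F$''; since $v,w\in D$ need not have small $\prec$-position, you should take $\gamma_F$ to bound $F\cup\{v,w\}$ from the start. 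With that adjustment the spliced path $v,v',\dots,w',w$ is genuinely injective and avoids $F$, as you claim, and the argument goes through.
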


Finally, we will use elementary submodels in certain proofs to split large, highly connected set of vertices into a sequence of smaller, well-behaved and still fairly connected sets.

\begin{definition}A \emph{nice $\kappa$-chain of elementary submodels for $\mc X$} is a sequence $(M_\alpha)_{\alpha<\cf(\kappa)}$ so that $M_0=\emptyset$, $(M_\alpha)_{1\leq \alpha<\cf(\kappa)}$  is an increasing sequence of elementary submodels of $H(\Theta)$ (for some large enough cardinal $\Theta$) with $|M_\alpha|=\kappa_\alpha$ and
\begin{enumerate}
\item $\mc X\in M_\alpha$ and $M_\alpha\cup \{M_\alpha\}\subseteq M_ \beta$ if $1\leq \alpha<\beta<\cf(\kappa)$,
\item $\kappa_{\alpha+1}\cup\{\kappa_{\alpha+1}\}\subseteq M_{\alpha+1}$ for $\alpha<\cf(\kappa)$,
\item the sequence is continuous i.e. $M_ \beta=\bigcup\{M_\alpha:\alpha<\beta\}$ for any limit $\beta<\cf(\kappa)$,
\item if $\kappa$ is a limit cardinal then $(\kappa_\alpha)_{\alpha<\cf(\kappa)}$ is a strictly increasing sequence of regular cardinals in $\kappa$,
\item if $\kappa$ is regular then $M_\alpha\cap \kappa \in \kappa$, 
\item if $\kappa=\lambda^+$ then $\kappa_\alpha=\lambda$ for all $1\leq \alpha<\kappa$.
\end{enumerate}
\end{definition}

\begin{obs}\label{elem} Let $\gr$ be a graph and $A\subseteq V$ $\kappa$-unseparable. Suppose that  $(M_\alpha)_{\alpha<\cf(\kappa)}$ is a nice $\kappa$-chain of elementary submodels covering $A$ so that $A,G\in M_1$. Then $A\cap (M_{\alpha+1}\setm M_\alpha)$ is $\sat {|M_{\alpha+1}|}$ in $V\cap (M_{\alpha+1}\setm M_\alpha)$ for all $\alpha<\cf(\kappa)$.
\end{obs}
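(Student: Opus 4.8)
The plan is to fix $\alpha<\cf(\kappa)$ and two distinct vertices $v,w\in A\cap(M_{\alpha+1}\setm M_\alpha)$, to abbreviate $N=V\cap(M_{\alpha+1}\setm M_\alpha)$ and $\lambda=|M_{\alpha+1}|=\kappa_{\alpha+1}$, and to produce $\lambda$-many finite $v$--$w$ paths inside $N$ that pairwise meet only in $v$ and $w$. By the definition of $\sat\lambda$ this is exactly what must be shown. Throughout I will use that the chain is nice, so that $M_\alpha\in M_{\alpha+1}$, that $G,A\in M_{\alpha+1}$ (since $A,G\in M_1\subseteq M_{\alpha+1}$), that $\lambda\subseteq M_{\alpha+1}$ (from the clause $\kappa_{\alpha+1}\cup\{\kappa_{\alpha+1}\}\subseteq M_{\alpha+1}$), and — crucially — that $\lambda<\kappa$; also $v,w\in M_{\alpha+1}\setm M_\alpha$, so in particular $v,w\notin M_\alpha$.

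The tempting route, namely taking an arbitrary separator $F\subseteq N$ of size $<\lambda$ and asking for a $v$--$w$ path in $N$ avoiding $F$, runs into the obstacle that such an $F$ need not be an \emph{element} of $M_{\alpha+1}$, so it cannot be fed into an elementarity argument. I would sidestep this by reflecting the existence of an entire family of paths at once, rather than one path per separator. The statement to reflect is: \emph{there is a sequence $\langle P_i:i<\lambda\rangle$ of pairwise internally disjoint finite $v$--$w$ paths in $G$, each avoiding $M_\alpha$.} Its parameters $v,w,G,M_\alpha,\lambda$ all lie in $M_{\alpha+1}$, so elementarity applies.

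First I would check that this statement holds in $H(\Theta)$. Since $A$ is $\sat\kappa$, there are $\kappa$-many pairwise internally disjoint finite $v$--$w$ paths in $G$; as their internal vertex sets are pairwise disjoint and $v,w\notin M_\alpha$, any path meeting $M_\alpha$ does so through an internal vertex, giving an injection from the paths meeting $M_\alpha$ into $M_\alpha$. Hence at most $|M_\alpha|=\kappa_\alpha<\kappa$ of them meet $M_\alpha$, leaving $\kappa$-many (in particular $\lambda$-many) that avoid $M_\alpha$. Reflecting into $M_{\alpha+1}$ yields such a sequence $\langle P_i:i<\lambda\rangle\in M_{\alpha+1}$. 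Finally, because $\lambda\subseteq M_{\alpha+1}$, every index $i$ lies in $M_{\alpha+1}$, so each $P_i\in M_{\alpha+1}$; being finite, $P_i\subseteq M_{\alpha+1}$, and since $P_i$ avoids $M_\alpha$ while $v,w\notin M_\alpha$, the whole vertex set of $P_i$ sits in $M_{\alpha+1}\setm M_\alpha$, i.e.\ in $N$. Thus $\langle P_i:i<\lambda\rangle$ is the desired family inside $N$, and as $v\neq w\in A\cap N$ were arbitrary, $A\cap(M_{\alpha+1}\setm M_\alpha)$ is $\sat{|M_{\alpha+1}|}$ in $N$.

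I expect the only genuine obstacle to be the one flagged above: realizing that one must reflect the existence of the full $\lambda$-sized family (a single first-order assertion whose parameters all belong to $M_{\alpha+1}$) instead of attacking separators one at a time, and then combining $\lambda\subseteq M_{\alpha+1}$ with the finiteness of each $P_i$ to conclude that the reflected paths really live in $N$. The counting step bounding by $\kappa_\alpha<\kappa$ the paths that meet $M_\alpha$, and the verification that $\lambda<\kappa$ in each clause of the niceness definition, are routine.
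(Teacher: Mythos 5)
Your proof is correct and follows essentially the same route as the paper: reflect a $\kappa_{\alpha+1}$-sized family of internally disjoint $v$--$w$ paths into $M_{\alpha+1}$, discard (or, as you do, build into the reflected statement) the $\leq|M_\alpha|$-many that meet $M_\alpha$, and use $\kappa_{\alpha+1}\subseteq M_{\alpha+1}$ together with the finiteness of each path to place them inside $V\cap(M_{\alpha+1}\setminus M_\alpha)$. The only cosmetic difference is that the paper first extracts a $\kappa_{\alpha+1}^+$-sequence witnessing unseparability inside $M_{\alpha+1}$ and then prunes, whereas you reflect the avoidance of $M_\alpha$ directly; both hinge on the same parameters $v,w,G,M_\alpha$ lying in $M_{\alpha+1}$.
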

\begin{proof} Fix $\alpha<\cf(\kappa)$ and two vertices $u,v \in A\cap (M_{\alpha+1}\setm M_\alpha)$. As $M_{\alpha+1}\models A$ is $\sat {\kappa_{\alpha+1}^+}$, we can find a ${\kappa_{\alpha+1}^+}$-sequence $(P_\xi)_{\xi<\kappa_{\alpha+1}^+}\in M_{\alpha+1}$ of disjoint finite path from $u$ to $v$. As $M_\alpha\in M_{\alpha+1}$ and $|M_\alpha|<\kappa_{\alpha+1}^+$, we can suppose that each path  $ P_\xi$ is disjoint from $M_\alpha$. Now, using $\kappa_{\alpha+1}\cup \{\kappa_{\alpha+1}\}\subseteq M_{\alpha+1}$, we get that $$\{ P_\xi:\xi<\kappa_{\alpha+1}\}\subseteq M_{\alpha+1}.$$ As each $P_\xi$ is finite, we actually have $P_\xi\subseteq V\cap (M_{\alpha+1}\setm M_\alpha)$ for $\xi<\kappa_{\alpha+1}$ which finishes the proof.
\end{proof}

We refer the reader to \cite{soukel} for an introduction to elementary submodels in combinatorics.

\section{Constructing uncountable paths}\label{sec3line}

Now, we present our most important tools in constructing uncountable paths with Lemma \ref{3line} being the main result of this section. 

\begin{definition} For a path $P$ and $x\prec_P y\in P$ let $P\uhp [x,y)$ denote the segment of $P$ from $x$ to $y$ (excluding $y$) i.e. the graph spanned by $\{z\in V(P):x\leq_P z\prec_P y\}$. For a set $A$ and path $P$ we say that \emph{$P$ is concentrated on $A$} iff $$N(y)\cap A \cap V(P\uhp[x,y))\neq \emptyset$$ for every $\prec_P$-limit $y\in P$ and $x\prec_P y$ in $P$. 
\end{definition}

We will use the following easy observation regularly

\begin{obs} Suppose that $P$ is a path concentrated on a set $A$, $p\in V(P)$ and there is a limit element of $P$ above $p$. Then there is a $q\in A\cap V(P)$ such that $p\prec_P q$ and $P\uhp[p,q)$ is finite. 
\end{obs}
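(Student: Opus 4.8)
The plan is to find $q$ only finitely far above $p$ by working just below the least limit element of $P$ that lies above $p$. First I would set $y$ to be the $\prec_P$-least limit element of $P$ with $p\prec_P y$; this exists because the hypothesis guarantees that the set of $\prec_P$-limit elements above $p$ is nonempty and $\prec_P$ is a well ordering. The whole reason for taking the \emph{least} such limit is the following claim, which I would prove next: the segment $P\uhp[p,y)$ has order type $\omega$. Granting this, every $z\in V(P\uhp[p,y))$ sits at finite $\prec_P$-distance above $p$, i.e. $P\uhp[p,z)$ is finite.

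To verify the claim, write the order type of $\{z\in V(P):z\prec_P p\}$ as $\alpha=\lambda+n$, where $\lambda$ is a limit ordinal (or $0$) and $n<\omega$. The least limit ordinal exceeding $\alpha$ is $\lambda+\omega$, so $y$ occupies position $\lambda+\omega$, and the positions filled by $V(P\uhp[p,y))$ are exactly the ordinals in $[\lambda+n,\lambda+\omega)$, which has order type $\omega$. (One can avoid the ordinal bookkeeping: were some $w\in V(P\uhp[p,y))$ to have $P\uhp[p,w)$ infinite, the $\prec_P$-supremum of the first $\omega$ elements of $P\uhp[p,w)$ would be a limit element of $P$ strictly between $p$ and $y$, contradicting the minimality of $y$.) This is the only place where minimality of $y$ enters, and it is the one step I expect to need genuine care; the rest is just unwinding the definitions.

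With the claim available, I would finish via concentration. Since $y$ is a $\prec_P$-limit lying above $p$, the segment $P\uhp[p,y)$ is infinite, so $p$ has a $\prec_P$-successor $p'$ with $p'\prec_P y$. Applying the definition of ``$P$ is concentrated on $A$'' to the limit $y$ and to $x=p'$ produces a vertex
$$q\in N(y)\cap A\cap V(P\uhp[p',y)).$$
Then $q\in A\cap V(P)$ and $p\prec_P p'\leq_P q$, so $p\prec_P q$; and since $q\in V(P\uhp[p,y))$, the claim gives that $P\uhp[p,q)$ is finite. Hence $q$ is as required. Note that the neighbour-of-$y$ feature of $q$ is incidental: all that matters is that concentration forces a member of $A$ into the ``finite zone'' $V(P\uhp[p,y))$ immediately above $p$.
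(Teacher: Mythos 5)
Your argument is correct and is exactly the intended (routine) verification of this observation, which the paper states without proof: take the least limit $y$ above $p$, note that $[p,y)$ has order type $\omega$ so everything in it is at finite distance from $p$, and apply concentration at $y$ with $x$ the successor of $p$ to force a point of $A$ strictly above $p$ into that finite zone. The small care you take in using $x=p'$ rather than $x=p$ (so that the point of $A$ produced is not $p$ itself) is the right touch.
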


We will apply the next lemma multiple times:

\begin{lemma}\label{lego}
Let $G=(V,E)$ be a graph and $\kappa\geq \omega$. If $A$ is a $\sat \kappa$ subset of $V$ then the following are equivalent:
\begin{enumerate}
	\item there is a path $P$ of order type $\kappa$ concentrated on $A$,
	\item $A$ is covered by the vertices of a path $Q$ of order type $\kappa$ concentrated on $A$.
\end{enumerate}
Moreover, if $a\in A$ and $C\in [A]^{\cf(\kappa)}$ then in clause (2) we can construct $Q$ with first point $a$ and cofinal set $C$. 

\end{lemma}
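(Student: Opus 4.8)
The implication $(2)\Rightarrow(1)$ is immediate: the path $Q$ produced in $(2)$ is itself a path of order type $\kappa$ concentrated on $A$, so it witnesses $(1)$. All the content lies in $(1)\Rightarrow(2)$ together with the moreover clause, and the plan is to prove these by induction on $\kappa$. The base case $\kappa=\oo$ reduces to Observation \ref{pathconnobs}: there are no $\prec$-limits below $\oo$, so concentration is vacuous, and a countable $\sat\oo$ set is covered by a path of order type $\leq\oo$; prescribing the first point $a$ and threading a prescribed cofinal $C\subseteq A$ is routine, using that $\sat\oo$ sets are connected by finite paths avoiding any finite set.

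For the inductive step I would fix a nice $\kappa$-chain $(M_\alpha)_{\alpha<\cf(\kappa)}$ for $\{G,A,P,a,C\}$ covering $A$, so that by Observation \ref{elem} each block $A_\alpha:=A\cap(M_{\alpha+1}\setm M_\alpha)$ is $\sat{\kappa_{\alpha+1}}$ inside $V_\alpha:=V\cap(M_{\alpha+1}\setm M_\alpha)$. Restricting $P$ to a block gives $P_\alpha:=P\uhp(M_{\alpha+1}\setm M_\alpha)$; for $\kappa$ regular $M_\alpha\cap\kappa$ is an ordinal, so $M_\alpha\cap V(P)$ is an initial segment of $P$ and $P_\alpha$ is a genuine sub-path. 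One checks that $P_\alpha$ is concentrated on $A_\alpha$, since any $A$-neighbour of a $\prec_P$-limit $p_\zeta$ whose index lies in $[M_\alpha\cap\kappa,\zeta)$ automatically lands in $A_\alpha$, and concentration of $P$ keeps such neighbours cofinal below $p_\zeta$. Thus hypothesis $(1)$ holds block-by-block for $A_\alpha$ at the smaller cardinal $\kappa_{\alpha+1}$, and the inductive hypothesis — crucially its moreover clause — yields covering paths $Q_\alpha$ of $A_\alpha$ inside $V_\alpha$ whose first point and cofinal set I get to prescribe.

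The heart of the argument is chaining the $Q_\alpha$ into a single concentrated path $Q$ of type $\kappa$, and this is where I expect the main obstacle. Concatenating the blocks produces, at each block boundary, a vertex sitting at a limit position of $Q$; to keep $Q$ a path concentrated on $A$ I must cap each partial concatenation by a vertex with cofinally many $A$-neighbours below it. The plan is to use as caps the backbone vertices $p_{\delta_\alpha}$ of $P$ at the boundary indices $\delta_\alpha=M_\alpha\cap\kappa$: these are $\prec_P$-limits, so by concentration their $A$-neighbours are $\prec_P$-cofinal below them, hence spread across cofinally many lower blocks. The point that makes this work is the coverage invariant: since each $Q_\beta$ covers $A_\beta$, every such $A$-neighbour already lies on the path built so far, and appearing in cofinally many lower blocks forces it to be $\prec_Q$-cofinal below the cap. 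At successor boundaries there is only one top block, so there I additionally invoke the moreover clause, prescribing the cofinal set of $Q_\alpha$ to consist precisely of $A$-neighbours of the next backbone cap; for the global requirements one sets the very first vertex to $a$ and distributes $C$ across the boundaries.

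The genuinely delicate technical points are therefore (i) aligning the $\prec_P$-ordering with the block decomposition — clean for regular $\kappa$ via $M_\alpha\cap\kappa\in\kappa$, but needing extra care for singular $\kappa$, where $M_\alpha\cap\kappa$ need not be an ordinal and so $P_\alpha$ need not be a sub-path — and (ii) the bookkeeping of the moreover clause that lets each cap see a cofinal $A$-neighbourhood. Everything else (finite connections within and between blocks) is supplied by $\sat{\kappa}$-ness, which by the Observation following the definition of $\sat{\kappa}$ is exactly the statement that small vertex sets cannot separate points of $A$.
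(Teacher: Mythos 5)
Your reduction to $(1)\Rightarrow(2)$ plus the moreover clause, the induction on $\kappa$, and your treatment of the regular case all track the paper's proof closely: for regular $\kappa$ the paper likewise takes a nice chain $(M_\alpha)_{\alpha<\kappa}$, uses that $M_\alpha\cap V(P)$ is a proper initial segment of $P$, applies the inductive hypothesis (with its moreover clause) inside each block $M_{\alpha+1}\setm M_\alpha$, and glues at the boundary points $p_\alpha=\min_{\prec_P}P\setm M_\alpha$ essentially as you describe. One piece of bookkeeping you should not understate: at a successor boundary the set $N(p_{\alpha+1})\cap A\cap M_{\alpha+1}\setm M_\alpha$ is only guaranteed to be \emph{infinite}, whereas the moreover clause of the inductive hypothesis wants a cofinal set of size $\cf(\lambda)$ for $\lambda=|A\cap M_{\alpha+1}\setm M_\alpha|$, which may well exceed $\omega$. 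The paper gets around this by ending the block path with an extra tail of order type $\omega$ threaded through those neighbours (so the block path has type $\lambda+\omega$), while the main body of type $\lambda$ takes its prescribed cofinal set from the neighbourhood of a suitably chosen $\cf(\lambda)$-limit point of $P$ inside the block. Your phrase ``prescribing the cofinal set of $Q_\alpha$ to consist precisely of $A$-neighbours of the next backbone cap'' does not typecheck as stated when $\cf(\lambda)>\omega$.

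The genuine gap is the singular case, which you flag but do not resolve, and which cannot be fixed by ``extra care'' on the same plan. If $\kappa>\cf(\kappa)$ the chain has length $\cf(\kappa)$ with $|M_\alpha|=\kappa_\alpha$ strictly increasing, and $M_\alpha\cap V(P)$ is no longer an initial segment of $P$; hence $P\uhp (M_{\alpha+1}\setm M_\alpha)$ need not be a path at all, hypothesis $(1)$ does not relativize to the blocks, and your backbone caps $p_{\delta_\alpha}$ lose their meaning. The paper abandons elementary submodels here entirely: it first carves out of $P$ pairwise disjoint subpaths $R_\alpha$ of order type $\kappa_\alpha+n_\alpha$, starting and ending in $A$ (possible because $P$ has $\kappa$ many $\kappa_\alpha$-limit points), together with reserved connector sets $W_\alpha$ guaranteeing that points of $A$ remain joinable by $\kappa$ many disjoint finite paths avoiding $\bigcup_\alpha R_\alpha$; it then assembles $Q$ by a recursion of length $\cf(\kappa)$ along the spine $R_0$ (of order type $\cf(\kappa)=\kappa_0$), at stage $\beta$ invoking the inductive hypothesis on $R_\beta\uhp\kappa_\beta$ to swallow $A_\beta\cup(R_\beta\cap A)$ before returning to the spine. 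Some such advance disjointification of $P$ into concentrated subpaths of prescribed order types, with reserved connectors, is what your singular step is missing.
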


What this lemma says is that the existence of a large path concentrated on $A$ is sufficient to construct another path which includes all of $A$ and has a prescribed first point and cofinal set. The proof will proceed by taking the path $P$ apart and building a new path $Q$ using that $A$ is a $\sat \kappa$.


\begin{proof}
We prove $(1)\Rightarrow(2)$ by induction on $\kappa$. The result holds for $\kappa=\omega$ by Observation \ref{pathconnobs} so suppose that $\kappa>\omega$ and that we proved for cardinals $<\kappa$. Also, fix $a\in A$, $C\in [A]^{\cf(\kappa)}$ and path $P$ concentrated on $A$; note that we do not need to worry about $C$ if $\kappa$ is regular as every subset of $A$ of size $\kappa$ will be cofinal in $Q$. We distinguish two cases:

\textbf{Case 1:} $\kappa>\cf(\kappa)$.

Let us fix an increasing cofinal sequence of regular cardinals $(\kappa_\alpha)_{\alpha<\cf(\kappa)}$ in $\kappa$ so that $\kappa_0=\cf(\kappa)$ and $\kappa_\beta>\sup\{\kappa_\alpha:\alpha<\beta\}$ for all $\beta<\cf(\kappa)$. 

\begin{lclaim} There are pairwise disjoint paths $\{R_\alpha:\alpha<\cf(\kappa)\}$ in $V\setm (\{a\}\cup C)$ concentrated on $A$ such that 
\begin{enumerate}[(i)]
	\item $R_0$ has order type $\kappa_0$, $R_\alpha$ has order type $\kappa_\alpha+n_\alpha$ for some $n_\alpha\in \omega\setm \{0\}$,
	\item $R_0$ starts with an element of $A$, $R_\alpha$ starts and finishes  with an element of $A$ for $0<\alpha<\cf(\kappa)$,
	\item for every $x,y\in A$ there are $\kappa$ many pairwise disjoint finite paths from $x$ to $y$ in $\{x,y\}\cup V\setm \bigcup_{\alpha<\cf(\kappa)}R_\alpha$.
\end{enumerate}
\end{lclaim}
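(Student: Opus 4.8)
The plan is to exploit the given path $P$ together with a nice $\kappa$-chain of elementary submodels, using $P$ itself as a parameter so that the chain slices $P$ into manageable segments, and to verify the delicate clause (iii) by reflecting the $\sat{\kappa}$ property into the successive layers of the chain. \textbf{Setup.} First I would identify $V(P)$ with $\kappa$ via the witnessing well ordering $\prec_P$, so that segments of $P$ are literally intervals of ordinals. With the sequence $(\kappa_\alpha)_{\alpha<\cf(\kappa)}$ fixed as in the statement, take a nice $\kappa$-chain $(M_\alpha)_{\alpha<\cf(\kappa)}$ of elementary submodels covering $A$ with $G,A,P,a,C\in M_1$ and $|M_\alpha|=\kappa_\alpha$, arranged so that each $M_\alpha\cap V(P)$ is an initial segment of $(V(P),\prec_P)$, say of order type $\delta_\alpha$ (a standard closure requirement compatible with niceness). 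Write $G_\alpha=M_{\alpha+1}\setm M_\alpha$ for the $\alpha$-th gap. By Observation \ref{elem}, $A\cap G_\alpha$ is $\sat{\kappa_{\alpha+1}}$ inside $G_\alpha$; this surplus of connectivity in each gap, over the size of the piece we shall remove there, is exactly what drives (iii).

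\textbf{Constructing the $R_\alpha$.} For each $\alpha$ the restriction $P\uhp[\delta_\alpha,\delta_{\alpha+1})$ is a segment of $P$, hence again a path concentrated on $A$ (the concentration condition only constrains limits and points below them, so it is inherited by intervals), it lies entirely in $G_\alpha$, and its order type has size $\kappa_{\alpha+1}$. I would let $R_\alpha$ be the initial subsegment $P\uhp[\delta_\alpha,\delta_\alpha+\kappa_\alpha)$, then trim its first point and, for $\alpha>0$, extend it by finitely many steps so that it begins, and for $\alpha>0$ also ends, in $A$; since $P$ is concentrated on $A$ there are points of $A$ cofinally below every limit, so these adjustments cost only finitely many vertices and produce order type $\kappa_0$ for $R_0$ and $\kappa_\alpha+n_\alpha$ for $0<\alpha<\cf(\kappa)$. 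As the gaps are pairwise disjoint, the $R_\alpha$ are automatically pairwise disjoint and concentrated on $A$, which gives (i) and (ii). For $\alpha\geq 1$ the set $\{a\}\cup C\subseteq M_1\subseteq M_\alpha$ is disjoint from $G_\alpha$, so no conflict arises; for $\alpha=0$ I would choose the window inside $G_0=M_1$ to avoid the $\leq\kappa_0$ points of $\{a\}\cup C$, which is possible because the available segment has order type of size $\kappa_1>\kappa_0$.

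\textbf{The heart of the matter: clause (iii).} This is the only genuine difficulty, since $S:=\bigcup_{\alpha<\cf(\kappa)}R_\alpha$ has full size $\kappa$, so one cannot simply invoke $\sat{\kappa}$ to route paths around it. Here I would fix $x,y\in A$ and $\delta<\cf(\kappa)$ with $x,y\in M_\delta$, and manufacture the required paths layer by layer. For each $\gamma$ with $\delta\leq\gamma<\cf(\kappa)$, the model $M_{\gamma+1}$ satisfies that $A$ is $\sat{\kappa_{\gamma+1}^+}$ and contains $x,y$, so it holds a family of $\kappa_{\gamma+1}^+$-many pairwise disjoint finite $x$–$y$ paths; since these are disjoint and $|M_\gamma|=|R_\gamma|=\kappa_\gamma$, discarding the at most $\kappa_\gamma$ whose interior meets $M_\gamma$ and the at most $\kappa_\gamma$ whose interior meets $R_\gamma$ leaves $\kappa_{\gamma+1}$-many whose interiors lie in $G_\gamma\setm R_\gamma$. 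Because $S\cap G_\gamma=R_\gamma$, those interiors avoid $S$ entirely. Paths arising from different $\gamma$ have interiors in different gaps, hence are pairwise disjoint, so amalgamating over $\gamma\in[\delta,\cf(\kappa))$ yields $\sum_{\delta\leq\gamma<\cf(\kappa)}\kappa_{\gamma+1}=\kappa$ pairwise disjoint finite $x$–$y$ paths inside $\{x,y\}\cup(V\setm S)$, which is precisely (iii).

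I expect the two load-bearing points to be, first, arranging the chain so that each $M_\alpha\cap V(P)$ is an initial segment — this is what makes the gap-segments of $P$ honest concentrated paths of the correct order type and what confines $R_\alpha$ to $G_\alpha$ — and second, the reflection bookkeeping in (iii), whose crux is that each removed piece $R_\gamma$ is \emph{strictly} smaller ($\kappa_\gamma$) than the connectivity $\kappa_{\gamma+1}$ native to its gap, so unseparability survives in every layer. Everything else (inheritance of concentration to subsegments, the finite endpoint corrections, and disjointness from $\{a\}\cup C$) I expect to be routine.
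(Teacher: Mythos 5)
Your construction hinges on the assumption that the chain can be ``arranged so that each $M_\alpha\cap V(P)$ is an initial segment of $(V(P),\prec_P)$,'' and this is where the argument breaks down: the Claim sits inside Case 1 of Lemma \ref{lego}, where $\kappa$ is \emph{singular}, and for singular $\kappa$ no such arrangement exists. Indeed, already $M_1$ contains $\cf(\kappa)=\kappa_0\subseteq\kappa_1\subseteq M_1$ together with (by elementarity) a cofinal sequence in $\kappa$ of length $\cf(\kappa)$, so $M_\alpha\cap\kappa$ is cofinal in $\kappa$ for every $\alpha\geq 1$; identifying $V(P)$ with $\kappa$ via $\prec_P$, each $M_\alpha$ then contains points of arbitrarily large $\prec_P$-rank while $|M_\alpha|=\kappa_\alpha<\kappa$, so $M_\alpha\cap V(P)$ cannot be $\prec_P$-downward closed. (This is exactly why clause (5) of the definition of a nice chain, $M_\alpha\cap\kappa\in\kappa$, is imposed only for regular $\kappa$.) Without initial-segmenthood the ``gaps'' $G_\alpha=M_{\alpha+1}\setm M_\alpha$ are not intervals of $P$ and need not contain any interval of $P$ of order type $\kappa_\alpha$, so your definition of $R_\alpha$ collapses, and with it the whole verification of (iii), which relied on $\bigcup_\alpha R_\alpha$ meeting $G_\gamma$ exactly in $R_\gamma$. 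In effect you have transplanted the elementary-submodel slicing that the paper uses in Case 2 (regular $\kappa$) into Case 1, where it is unavailable.

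Your reflection argument for (iii) is sound in itself (it is essentially Observation \ref{elem}), but the paper secures (iii) by a different and more elementary device, which is what is actually needed here: writing $A=\bigcup_{\alpha<\cf(\kappa)}A_\alpha$ with $|A_\alpha|\leq\kappa_\alpha$, it reserves at stage $\beta$, alongside $R_\beta$, a witness set $W_\beta\in[V]^{\kappa_\beta}$ inside which any two points of $A_\beta$ are joined by $\kappa_\beta$ pairwise disjoint finite paths; $W_\beta$ is chosen disjoint from everything built so far (possible since $A_\beta$ is $\kappa$-unseparable and only $<\kappa$ vertices have been used), and each later $R_{\beta'}$ --- an interval of $P$ of the right order type, of which $P$ contains $\kappa$ many pairwise disjoint ones --- is chosen to avoid all earlier $W_\alpha$'s. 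For $x,y\in A$ the tail of the $W_\gamma$'s then already supplies the $\kappa$ required paths missing $\bigcup_\alpha R_\alpha$. Some such explicit reservation seems unavoidable precisely because the submodel gaps cannot be made to respect the path ordering when $\kappa$ is singular.
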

\begin{proof}We proceed by induction on $\alpha<\cf(\kappa)$. Let $A=\bigcup\{A_\alpha:\alpha<\cf(\kappa)\}$ be an increasing union with $|A_\alpha|\leq \kappa_\alpha$. Simply choose $R_0$ to be a segment of $P$ which satisfies the above conditions (on the starting point and order type). Suppose we constructed $\{R_\alpha:\alpha<\beta\}$ satisfying (i) and (ii) above and sets $\{W_\alpha:\alpha<\beta\}$ so that 
\begin{enumerate}[(a)]
	\item $W_\alpha\in [V]^{\kappa_\alpha}$ and any two points $x\neq y\in A_\alpha$ can be connected by $\kappa_\alpha$ pairwise disjoint finite paths in $\{x,y\}\cup W_\alpha$, and
	\item $W_\alpha\cap R_{\alpha'}=\emptyset$ for $\alpha,\alpha'<\beta$.
\end{enumerate}
Let $X_\beta=\bigcup\{R_\alpha:\alpha<\beta\}\cup \bigcup\{W_\alpha:\alpha<\beta\}$ and note that $X_\beta$ has size less than $\kappa$. As the path $P$ has $\kappa$ many $\kappa_\beta$-limit points, we can select a subpath $R_\beta$ of $P$ (an interval of $P$ starting and finishing  with an element of $A$) of order type $\kappa_\beta+n_\beta$ such that $R_\beta\cap X_\beta=\emptyset$. We can construct now $W_\beta\subs V\setm (X_\beta\cup R_\beta)$ as desired using that $A_\beta$ is $\kappa$-unseparable.
\end{proof}

Fix $\{R_\alpha:\alpha<\cf(\kappa)\}$ as above. Let $C_\alpha$ denote a subset of $A\cap R_\alpha$ which is cofinal in $R_\alpha\uhp \kappa_\alpha$ and let $t_\alpha$ denote the $\kappa_\alpha$-limit point of $R_\alpha$ for $0<\alpha<\cf(\kappa)$. Write $A\setm \bigcup_{\alpha<\cf(\kappa)}R_\alpha$ as $\{A_\alpha:\alpha<\cf(\kappa)\}$ so that $|A_\alpha|\leq \kappa_\alpha$. List $C$ as $\{c_\alpha:\alpha<\cf(\kappa)\}$.

 Construct a sequence of paths $\{Q_\alpha:\alpha<\cf(\kappa)\}$ concentrated on $A$ so that 
\begin{enumerate}
\item $Q_\beta$ end extends $Q_\alpha$ 
for $\alpha<\beta<\cf(\kappa)$,
\item $Q_\alpha$ starts with $a$ and finishes with a point $r_\alpha\in A\cap R_0$,
\item $Q_\alpha\cap R_0\subs R_0\uhp r_\alpha\cup \{r_\alpha\}$ and $Q_\alpha$ covers all points $x\in A\cap R_0$ such that $x<_{R_0}r_\alpha$,
\item $(Q_{\alpha+1}\setm Q_\alpha)\cap C=\{c_\alpha\}$,
\item $Q_\alpha$ covers $A_\alpha \cup (R_\alpha\cap A)$ for $0<\alpha<\cf(\kappa)$,
\item $Q_\alpha\cap R_\beta=\emptyset$ if $\alpha<\beta<\cf(\kappa)$.
\end{enumerate}
Suppose we have $Q_\alpha$ for $\alpha<\beta$. If $\beta=\alpha+1$ then let $r_\beta^-=r_\alpha$, if $\beta$ is limit then let $r_\beta^-$ be the first limit point of $R_0$ above $\{r_\alpha:\alpha<\beta\}$. Note that $r_\beta^-=\sup_{<_{R_0}}\{r_\alpha:\alpha<\beta\}$ if $\beta$ is a limit and hence $Q_{<\beta}=\bigcup\{Q_\alpha:\alpha<\beta\}\cup \{r_\beta^-\}$ is a path concentrated on $A$ by property (3). Let $r^+_\beta <_{R_0} r_\beta\in R_0$ be the first two points of $A$ above $r_\beta^-$ and note that that $R_0\uhp [r_\beta^-,r_\beta]$ is finite. 

\begin{figure}[H]%
\centering

\psscalebox{0.7 0.7} 
{
\begin{pspicture}(0,-2.345)(13.47,2.345)
\psline[linecolor=black, linewidth=0.04](0.8,1.1692857)(9.6,1.1692857)
\psline[linecolor=black, linewidth=0.04, linestyle=dotted, dotsep=0.10583334cm](9.8,1.1692857)(12.2,1.1692857)
\rput[bl](12.9,0.955){\Large{$R_0$}}
\psdots[linecolor=black, dotsize=0.24](0.4,-0.6307143)
\rput[bl](0.0,-0.43071428){\Large{$a$}}
\psbezier[linecolor=black, linewidth=0.04, arrowsize=0.05291666666666667cm 2.0,arrowlength=1.4,arrowinset=0.0]{->}(0.4,-0.6307143)(0.8,-0.6307143)(1.2,-0.23071429)(1.4,0.16928571)(1.6,0.5692857)(1.8,1.7692857)(2.4,1.7692857)(3.0,1.7692857)(3.2,0.3692857)(3.6,0.3692857)(4.0,0.3692857)(4.0,1.0692858)(4.4,1.6692857)(4.8,2.2692857)(5.3142858,1.4407142)(5.9,1.3407143)
\rput[bl](3.1857142,1.855){\Large{$Q_{<\beta}$}}
\psline[linecolor=black, linewidth=0.04](8.2,1.3692857)(8.2,0.9692857)
\psline[linecolor=black, linewidth=0.04](6.4,1.3692857)(6.4,0.9692857)
\psline[linecolor=black, linewidth=0.04](7.4,1.3692857)(7.4,0.9692857)
\rput[bl](6.0,1.5692858){\large{$r_\beta^-$}}
\rput[bl](7.357143,1.5692858){\large{$r_\beta^+,r_\beta\in A$}}
\psline[linecolor=black, linewidth=0.04](4.4,-1.6307143)(8.4,-1.6307143)
\psline[linecolor=black, linewidth=0.04, linestyle=dotted, dotsep=0.10583334cm](8.6,-1.6307143)(10.0,-1.6307143)
\psdots[linecolor=black, dotsize=0.16](10.4,-1.6307143)
\psdots[linecolor=black, dotsize=0.24](11.8,-1.6307143)
\psline[linecolor=black, linewidth=0.04](10.4,-1.6307143)(11.8,-1.6307143)
\rput[bl](12.642858,-1.9021429){\Large{$R_\beta$}}
\psbezier[linecolor=black, linewidth=0.04, linestyle=dashed, dash=0.17638889cm 0.10583334cm, arrowsize=0.05291666666666667cm 2.0,arrowlength=1.4,arrowinset=0.0]{->}(7.4,1.1692857)(7.0,-0.030714286)(6.0,0.16928571)(5.6,0.16928571)(5.2,0.16928571)(4.2,-0.030714286)(4.2,-1.2307143)
\psdots[linecolor=black, dotsize=0.16](4.4,-1.6307143)
\psdots[linecolor=black, dotsize=0.24](7.4,1.1692857)
\psdots[linecolor=black, dotsize=0.24](8.2,1.1692857)
\psbezier[linecolor=black, linewidth=0.04, linestyle=dashed, dash=0.17638889cm 0.10583334cm, arrowsize=0.05291666666666667cm 2.0,arrowlength=1.4,arrowinset=0.0]{->}(11.8,-1.6307143)(11.6,-1.0307143)(11.0,-0.6307143)(10.2,-0.43071428)(9.4,-0.23071429)(8.6,-0.23071429)(8.4,0.5692857)
\psbezier[linecolor=black, linewidth=0.04, linestyle=dashed, dash=0.17638889cm 0.10583334cm, arrowsize=0.05291666666666667cm 2.0,arrowlength=1.4,arrowinset=0.0]{->}(4.4,-1.6307143)(4.4,-2.2307143)(5.1571426,-2.3735714)(5.757143,-2.1735713)(6.357143,-1.9735714)(6.2,-1.2307143)(6.6,-1.2307143)(7.0,-1.2307143)(7.0,-1.6307143)(7.2,-2.0307143)(7.4,-2.4307144)(8.328571,-2.1592858)(8.942857,-2.0164285)
\psdots[linecolor=black, dotsize=0.16](6.2714286,-1.6307143)
\psdots[linecolor=black, dotsize=0.16](7.0285716,-1.6307143)
\psdots[linecolor=black, dotsize=0.24](10.228572,-0.445)
\rput[bl](10.485714,-0.23071429){\large{$c_\beta$}}
\rput[bl](10.3,-2.345){\large{$t_\beta$}}
\end{pspicture}
}

\caption{Extending $Q_{<\beta}$ to $Q_\beta$.}%

\end{figure}

\begin{lclaim} There is a path $S$ concentrated on $A$ in $V\setm (\bigcup\{R_\alpha:\alpha\in \kappa\setm \{\beta\}\}\cup Q_{<\beta})$ such that 
\begin{enumerate}[(i)]
	\item $S$ end extends $R_0\uhp [r_\beta^-,r_\beta^+]$ and the last point of $S$ is $r_\beta$,
	\item $S$ covers $A_\beta\setm Q_{<\beta}\cup (R_\beta\cap A)$,
	\item $S\cap C=\{c_\beta\}$.
\end{enumerate}
\end{lclaim}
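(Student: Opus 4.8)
The plan is to build $S$ by invoking the induction hypothesis at the smaller cardinal $\kappa_\beta<\kappa$ to produce the transfinite bulk of the path, and then to splice on the prescribed finite ends. Concretely, I would work in the induced subgraph on
$$V^*=V\setm\Big(\bigcup\{R_\alpha:\alpha\neq\beta\}\cup Q_{<\beta}\cup(C\setm\{c_\beta\})\Big)$$
and take as the set to be covered
$$D=(A_\beta\setm Q_{<\beta})\cup\big((R_\beta\uhp\kappa_\beta)\cap A\big)\cup\{c_\beta\}.$$
Here I first arrange, by choosing the partition $\{A_\alpha\}$ of $A\setm\bigcup R_\alpha$ compatibly with the listing of $C$, that $A_\beta\cap C\subseteq\{c_\beta\}$, so that $D\cap C=\{c_\beta\}$; one then checks $c_\beta\in V^*$ using property (4) for $Q_{<\beta}$. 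The first thing to verify is that $D\subseteq A\cap V^*$ and that $D$ is $\sat{\kappa_\beta}$ in $V^*$. This follows from clause (iii) of the previous Claim, which makes $A$ $\sat\kappa$ off $\bigcup R_\alpha$, together with the cardinality bounds $|Q_{<\beta}|<\kappa_\beta$ (as $\kappa_\beta>\sup_{\alpha<\beta}\kappa_\alpha$) and $|C|=\cf(\kappa)=\kappa_0<\kappa_\beta$ (here $\beta\geq 1$; the base step is handled separately, starting at $a$): deleting $Q_{<\beta}\cup(C\setm\{c_\beta\})$ destroys fewer than $\kappa_\beta$ of the $\kappa$ internally disjoint finite paths joining any two points of $D$.

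Next I would observe that $R_\beta\uhp\kappa_\beta$ is a path of order type $\kappa_\beta$ lying in $V^*$ which is concentrated on $(R_\beta\uhp\kappa_\beta)\cap A\subseteq D$, hence on $D$: the neighbours witnessing that $R_\beta$ is concentrated on $A$ already lie on $R_\beta$ below the relevant limit. Thus both hypotheses of the Lemma hold for $\kappa_\beta$ and $D$ in $V^*$. Before applying the induction hypothesis I fix the $\kappa_\beta$-limit point $t_\beta$ of $R_\beta$ and note, again by concentration, that $N(t_\beta)\cap A$ meets $R_\beta\uhp\kappa_\beta$ cofinally; I choose a set $C''\in[D]^{\cf(\kappa_\beta)}$ of such neighbours and a point $a'\in D$. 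Deleting $t_\beta$, the finite tail $R_\beta\uhp[\kappa_\beta,\kappa_\beta+n_\beta)$, and the (fresh, finite) interior of a connecting path $\pi_1$ chosen below, the induction hypothesis (the case $\kappa_\beta<\kappa$, including its \emph{moreover} clause) yields a path $Q''$ of order type $\kappa_\beta$ covering $D$, concentrated on $D$, with first point $a'$ and cofinal set $C''$.

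Finally I assemble $S$. Using clause (iii) of the previous Claim I pick a finite path $\pi_1$ from $r_\beta^+\in A$ to $a'$ whose interior avoids everything already committed. At the other end I reach $r_\beta$ through $R_\beta$ itself: since $C''\subseteq N(t_\beta)$ is cofinal in $Q''$, the finite tail $R_\beta\uhp[\kappa_\beta,\kappa_\beta+n_\beta)$ — which begins at $t_\beta$, ends at a point $\ell\in A$ (property (ii) of the previous Claim), and covers the remaining points of $R_\beta\cap A$ — attaches to $Q''$; then a finite path $\pi_2$ from $\ell$ to $r_\beta$, provided by unseparability and chosen disjoint from all committed vertices and from $C\setm\{c_\beta\}$, completes the path. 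Thus
$$S=\big(R_0\uhp[r_\beta^-,r_\beta^+]\big){}^\frown\pi_1{}^\frown Q''{}^\frown\big(R_\beta\uhp[\kappa_\beta,\kappa_\beta+n_\beta)\big){}^\frown\pi_2$$
end extends $R_0\uhp[r_\beta^-,r_\beta^+]$, ends at $r_\beta$, covers $(A_\beta\setm Q_{<\beta})\cup(R_\beta\cap A)$, touches $R_0$ only in the prescribed segment and in $r_\beta$, and meets $C$ only in $c_\beta$. Its sole limit beyond $Q''$ is $t_\beta$, where concentration on $A$ holds precisely because $C''\subseteq N(t_\beta)\cap A$ is cofinal below it.

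The step I expect to be the main obstacle is reaching the prescribed endpoint $r_\beta$: since $A$ need induce no edges at all, $r_\beta$ may have \emph{no} neighbours in $A$, so it cannot be tacked on as a limit of an $A$-concentrated path. The device that rescues this is to cap the transfinite part not at $r_\beta$ but at the limit point $t_\beta$ of $R_\beta$ — which is guaranteed cofinally many $A$-neighbours by concentration — and then to walk to $r_\beta$ along the reserved finite tail of $R_\beta$ followed by a short finite path. The rest is bookkeeping: keeping $\pi_1$, $Q''$, the tail, $t_\beta$ and $\pi_2$ pairwise disjoint and off $\bigcup_{\alpha\neq\beta}R_\alpha\cup Q_{<\beta}\cup(C\setm\{c_\beta\})$, which is always possible since at every stage fewer than $\kappa$ vertices have been committed while $A$ remains $\sat\kappa$.
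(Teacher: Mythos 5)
Your proposal is correct and follows essentially the same route as the paper: apply the inductive hypothesis at $\kappa_\beta$ to the path $R_\beta\uhp\kappa_\beta$ inside the graph with all committed vertices removed, prescribe the cofinal set to consist of $A$-neighbours of the $\kappa_\beta$-limit $t_\beta$, and reach $r_\beta$ via the reserved finite tail of $R_\beta$ followed by a short finite path supplied by unseparability. The only cosmetic deviations are that the paper absorbs $c_\beta$ into the finite end path starting at $t_\beta$ rather than into the transfinite part, and prescribes $r_\beta^+$ directly as the first point of the inductively obtained path instead of using your connector $\pi_1$.
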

\begin{proof} $S$ is constructed using $R_\beta$ and the inductive hypothesis for $\kappa_\beta$. First, let us find a finite path $S'$ with first point $t_\beta$ and the finite end segment of $R_\beta$ so that $S'\cap C=\{c_\beta\}$ and the last point of $S'$ is $r_\beta$. This can be done as $A$ is $\sat \kappa$.

Now, note that $R_\beta\uhp \kappa_\beta$ is a path of order type $\kappa_\beta$ concentrated on $(R_\beta\cap A)\cup (A_\beta\setm Q_{<\beta})$ in $V_\beta=V\setm (\bigcup\{R_\alpha:\alpha\in \kappa\setm \{\beta\}\}\cup Q_{<\beta}\cup C\cup S')$ and that $(R_\beta\cap A)\cup (A_\beta\setm Q_{<\beta})$ is $\sat {\kappa_\beta}$ in $V_\beta$. Hence, we can apply the inductive hypothesis in $V_\beta$ and find a path $S''$ concentrated on $A$ of order type $\kappa_\beta$ with first point $r^+_\beta$, so that $S''$  covers $(R_\beta\cap A)\cup (A_\beta\setm Q_{<\beta})$ and has cofinal set $C_\beta$. We set $S=R_0\uhp [r^-_\beta,r^+_\beta]\smf S''\smf S'$


\end{proof}

Let $Q_\beta=Q_{<\beta}\smf S$ and thus the inductive step is done. Hence the proof for the case when $\kappa$ is singular is finished.\\

\textbf{Case 2:} $\kappa=\cf(\kappa)$.

We fix a nice sequence of elementary submodels $(M_\alpha)_{\alpha<\kappa}$ covering $A$ with $A,G\in M_1$ and let $A_\alpha=M_\alpha\cap A$. Let $p_\alpha=\min_{\prec_P}P\setm M_\alpha$ for $\alpha<\kappa$ and note that $p_\alpha\in M_{\alpha+1}$ and $p_\alpha$ is a $\prec_P$-limit. Also, observe that $$\{p\in A\cap N(p_\beta): p\prec_P p_\beta\}\setm M_\alpha \text{ is infinite }$$ for all $\alpha<\beta<\kappa$; indeed, this follows from the fact that $M_\alpha\cap P$ is a proper initial segment of $P\uhp p_\beta$.


Now, it suffices to construct a sequence of paths $\{Q_\alpha:\alpha<\kappa\}$ concentrated on $A$ so that $Q_1$ has first point $a$ and
\begin{enumerate}

\item $A_\alpha\subs Q_\alpha \subs M_\alpha$,
\item $Q_\alpha\smf (p_\alpha)$ is a path which is an initial segment of $Q_\beta$

\end{enumerate}
for all $\alpha<\beta<\kappa$. Indeed, $\bigcup \{Q_\alpha:\alpha<\kappa\}$ is the path we are looking for.

Suppose we constructed $Q_\alpha$ for $\alpha<\beta$. Let 
\[
 Q_{<\beta} =
  \begin{cases}
	 \bigcup\{Q_\alpha:\alpha<\beta\}\smf (p_\beta) & \text{if } \beta \text{ is a limit,}\\
	  Q_\alpha\smf (p_\alpha) & \text{if } \beta=\alpha+1.

  \end{cases}
\]

 Note that  $Q_{<\beta}$ is a path; for successor $\beta$ this is ensured by (2) while for a limit $\beta$ ensured by (1) and the observation about $p_\beta$ above.

If $\beta$ is a limit, we simply let $Q_\beta=\bigcup\{Q_\alpha:\alpha<\beta\}$; it is easy to see that (1) is satisfied as the chain $(M_\alpha)_{\alpha<\cf(\kappa)}$ is continuous. 

Now suppose $\beta=\alpha+1$. Our goal is to apply the inductive hypothesis and find a path $S$ concentrated on $A$ in $V\cap(M_{\alpha+1}\setm M_\alpha)$ so that 
\begin{enumerate}[(i)]
	\item $S$ has first point $p_\alpha$,
	\item $S$ covers $A\cap M_{\alpha+1}\setm M_\alpha$, and
		\item there is an infinite subset of $N(p_{\alpha+1})\cap A\cap M_{\alpha+1}\setm M_\alpha$ cofinal in $S$.
\end{enumerate}
Indeed, $Q_{\beta}=Q_\alpha \smf S$ will satisfy (1) and (2).

\begin{figure}[H]
\centering

\psscalebox{0.7 0.7} 
{
\begin{pspicture}(0,-2.72)(15.404142,2.72)
\psbezier[linecolor=black, linewidth=0.04](1.6141422,1.7)(2.4141421,1.7)(4.414142,1.4666667)(4.414142,-0.4)(4.414142,-2.2666667)(2.4141421,-2.5)(1.6141422,-2.5)
\psbezier[linecolor=black, linewidth=0.04](4.814142,2.5)(8.699142,2.5)(10.734142,2.3)(12.214142,1.5)(13.694142,0.7)(13.694142,-0.9)(12.214142,-1.7)(10.734142,-2.5)(8.144142,-2.7)(4.814142,-2.7)
\psbezier[linecolor=black, linewidth=0.04, arrowsize=0.05291666666666667cm 2.0,arrowlength=1.4,arrowinset=0.0]{->}(0.014142151,-1.1)(0.41414216,-0.7)(0.6141422,-0.3)(1.2141422,-0.3)(1.8141421,-0.3)(2.4141421,-0.9)(3.6141422,-0.9)
\rput[bl](1.8141421,0.1){\Large{$Q_\alpha$}}
\rput[bl](2.214142,2.3){\Large{$M_\alpha$}}
\rput[bl](13.014142,2.1){\Large{$M_{\alpha+1}$}}
\psdots[linecolor=black, dotsize=0.3](14.414143,-0.5)
\psdots[linecolor=black, dotsize=0.3](5.014142,-1.1)
\rput[bl](5.2141423,-1.7){\large{$p_\alpha$}}
\rput[bl](14.614142,-1.3){\large{$p_{\alpha+1}$}}
\psline[linecolor=black, linewidth=0.04](12.214142,-0.7)(11.014142,-0.7)(11.014142,-1.1)(12.214142,-1.1)
\psline[linecolor=black, linewidth=0.04](8.614142,0.5)(5.2141423,0.5)(5.2141423,0.9)(8.614142,0.9)
\psline[linecolor=black, linewidth=0.04](9.414143,0.7)(10.814142,0.7)
\psline[linecolor=black, linewidth=0.04](9.414143,0.5)(9.414143,0.9)
\psline[linecolor=black, linewidth=0.04](10.814142,0.5)(10.814142,0.9)
\psbezier[linecolor=black, linewidth=0.04, linestyle=dashed, dash=0.17638889cm 0.10583334cm, arrowsize=0.05291666666666667cm 2.0,arrowlength=1.4,arrowinset=0.0]{<-}(9.214142,1.1)(8.614142,1.5)(7.814142,1.7)(7.414142,1.7)(7.014142,1.7)(6.614142,1.5)(6.2141423,1.1)
\psbezier[linecolor=black, linewidth=0.04, linestyle=dashed, dash=0.17638889cm 0.10583334cm, arrowsize=0.05291666666666667cm 2.0,arrowlength=1.4,arrowinset=0.0]{->}(5.414142,-1.1)(5.814142,-1.1)(6.014142,-0.5)(6.014142,0.1)
\rput[bl](9.414143,1.15){$r_\beta^-$}
\rput[bl](10.814142,1.15){$r_\beta^+\in A$}
\psbezier[linecolor=black, linewidth=0.04, linestyle=dashed, dash=0.17638889cm 0.10583334cm, arrowsize=0.05291666666666667cm 2.0,arrowlength=1.4,arrowinset=0.0]{->}(10.814142,0.3)(10.614142,-0.1)(10.014142,-0.1)(9.814142,-0.3)(9.614142,-0.5)(10.014142,-0.9)(10.614142,-0.9)
\psbezier[linecolor=black, linewidth=0.04, linestyle=dashed, dash=0.17638889cm 0.10583334cm, arrowsize=0.05291666666666667cm 2.0,arrowlength=1.4,arrowinset=0.0]{->}(11.814142,-1.3)(12.414143,-1.9)(13.814142,-1.7)(14.214142,-0.9)
\rput[bl](6.414142,-0.3){{$R_\beta\subseteq A\cap N(r_\beta^-)$}}
\end{pspicture}
}
\caption{Extending $Q_{<\beta}$ to $Q_\beta$.}
\end{figure}

Let us pick the cofinal set mentioned in (iii) first: let $R^-\subseteq N(p_{\alpha+1})\cap A\cap M_{\alpha+1}\setm (M_\alpha\cup \{p_\alpha\})$ be infinite and find a path $R$ of order type $\omega$ in  $M_{\alpha+1}\setm M_\alpha$ covering $R^-$ and starting in $R^-$. The path $S$ will end with $R$ and hence property (iii) will be satisfied. Also, $p_\alpha$ might not be in $A$ but a finite segment of $P$  connects $p_\alpha$ to some $q\in A \cap M_{\alpha+1}\setm M_\alpha$.

Now, let $\lambda=|A\cap M_{\alpha+1}\setm M_\alpha|$ and find a point $r_\beta^-\in P\cap M_{\alpha+1}\setm M_\alpha$ which is a $\cf(\lambda)$-limit point of $P$; let $r_\beta^+$ be the first point of $A\cap P$ above $r_\beta^-$. Let $W=V(R\cup P\uhp[p_\alpha,q]\cup P\uhp [r_\beta^-,r_\beta^+])$.

Find a finite path $T$ in $M_{\alpha+1}\setm (M_\alpha\cup W)$ connecting $r_\beta^+$ to the first point of $R$ and let $R'=P\uhp [r_\beta^-,r_\beta^+]\smf T\smf R$. The path $S$ will start with $P\uhp[p_\alpha,q]$ and end with $R'$. Let $W'=W\cup V(T)$.

Finally, pick any $R_\beta\in[N(r_\beta^-)\cap A\cap M_{\alpha+1}\setm (M_\alpha\cup W')]^{\cf(\lambda)}$. Now apply the inductive hypothesis in the graph $G\uhp V\cap M_{\alpha+1}\setm (M_\alpha\cup W')$ for the $\sat \lambda$ set $A \cap M_{\alpha+1}\setm (M_\alpha\cup W')$; we can find a path $S'$ concentrated on $A$ which starts with $P\uhp[p_\alpha,q]$, covers $A \cap M_{\alpha+1}\setm M_\alpha$ and $R_\beta$ is cofinal in $S'$. Note that  $V\cap M_{\alpha+1}\setm M_\alpha$ contains a path which is concentrated on $A \cap M_{\alpha+1}\setm M_\alpha$ and has ordertype $\lambda$; indeed, consider an appropriate segment of the original path $P$ in $M_{\alpha+1}\setm M_\alpha$. 

We are done by letting $S=S'\smf R'$.

\end{proof}

As we will see, there are three main ingredients to constructing a path covering a set $A$ of size $\kappa$ one of which is being $\sat \kappa$.

\begin{definition}Suppose that $\gr$ is graph and $A\subseteq V$. We say that $A$ satisfies $\utakk \kappa $ iff for each $\lambda< \kappa$ there are $\kappa$-many pairwise disjoint paths concentrated on $A$ each of order type $\lambda$.
\end{definition}

If we have a fixed edge colouring we use $\utak \kappa i$ for ``$\utakk \kappa$ in colour $i$" for short. Also, let us mention an easy result for later reference:

\begin{obs}\label{utakeq}Suppose that $\gr$ is a graph, $A\in [V]^\kappa$. Consider the following statements:
\begin{enumerate}
\item there is a path $P$ in $G$ of size $\kappa$ concentrated on $A$,
\item for all $X\in [V]^{<\kappa}$ and $\lambda<\kappa$ there is a path $P$ of order type $\lambda$ disjoint from $X$ which is concentrated on $A$,
	\item $A$ satisfies $\utakk \kappa$.
	
\end{enumerate}
Then $(1)\Rightarrow (2) \Leftrightarrow (3)$.
\end{obs}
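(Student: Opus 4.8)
The plan is to establish the three implications $(2)\Rightarrow(3)$, $(3)\Rightarrow(2)$, and $(1)\Rightarrow(2)$ separately; together they yield the stated chain $(1)\Rightarrow(2)\Leftrightarrow(3)$. Throughout I assume $\kappa\geq\omega$ (the cases $\lambda\in\{0,1\}$ being trivial), and I will repeatedly use the elementary fact that for infinite cardinals $\mu,\nu<\kappa$ one has $\mu\cdot\nu<\kappa$.

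For $(3)\Rightarrow(2)$, fix $X\in [V]^{<\kappa}$ and $\lambda<\kappa$. By $\utakk\kappa$ there is a family of $\kappa$-many pairwise disjoint paths concentrated on $A$, each of order type $\lambda$. Since the paths are pairwise disjoint, each vertex of $X$ lies in at most one of them, so at most $|X|<\kappa$ of these paths meet $X$. Hence at least one (indeed $\kappa$-many) is disjoint from $X$, and any such path witnesses $(2)$. For $(2)\Rightarrow(3)$, fix $\lambda<\kappa$ and build pairwise disjoint paths $(P_\alpha)_{\alpha<\kappa}$ of order type $\lambda$ concentrated on $A$ by transfinite recursion. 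At stage $\alpha$ put $X_\alpha=\bigcup_{\beta<\alpha}V(P_\beta)$; since $|\alpha|<\kappa$ and each $|V(P_\beta)|=|\lambda|<\kappa$, we have $|X_\alpha|<\kappa$, so $(2)$ applied to $X_\alpha$ and $\lambda$ furnishes a path $P_\alpha$ of order type $\lambda$ concentrated on $A$ and disjoint from $X_\alpha$. As $\lambda<\kappa$ was arbitrary, this produces the families required by $\utakk\kappa$.

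The substantive direction is $(1)\Rightarrow(2)$. Given the path $P$ of size $\kappa$ concentrated on $A$, the key lemma to isolate is that every segment $P\uhp[x,y)$ is itself a path concentrated on $A$ under the induced ordering: if $v$ is a limit of the segment and $x\preceq_P x'\prec_P v$, then cofinality of $\{w\in N_P(v): w\prec_P v\}$ below $v$ in $P$ already produces a neighbour of $v$ inside $[x',v)$, which by concentration of $P$ can be taken in $A$. Granting this, fix $X\in [V]^{<\kappa}$ and $\lambda<\kappa$. Since $(V(P),\prec_P)$ has order type $\geq\kappa$ and $\lambda\cdot\kappa=\kappa$ as ordinals, I can cut the initial segment of $P$ of order type $\kappa$ into $\kappa$-many consecutive blocks, each an interval of order type $\lambda$. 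These blocks are pairwise disjoint segments, hence paths of order type $\lambda$ concentrated on $A$, so at most $|X|<\kappa$ of them meet $X$ and some block is disjoint from $X$, witnessing $(2)$.

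The main obstacle is the verification that intervals inherit both the path property and concentration on $A$. In particular, the delicate point is checking that a point which is a limit of a block is genuinely a $\prec_P$-limit with the same cofinal predecessors, so that the concentration hypothesis on $P$ transfers to the block; one must rule out that the block-limit has been created artificially by truncation. This, together with the ordinal identity $\lambda\cdot\kappa=\kappa$ that underlies the block decomposition, is where the real content lies, while the disjointness-counting arguments in the other implications are routine.
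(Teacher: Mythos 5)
Your proof is correct. The equivalence $(2)\Leftrightarrow(3)$ matches the paper's argument almost verbatim (the paper phrases $(2)\Rightarrow(3)$ as a maximality argument rather than a transfinite recursion, but the content is the same counting of $|X|<\kappa$ against $\kappa$ disjoint paths). For $(1)\Rightarrow(2)$ you take a genuinely different route: the paper notes that $X\cap P$ is bounded in $P$ when $\kappa$ is regular and takes an end segment, and for singular $\kappa$ truncates to $P\uhp\mu$ with $\mu=|X|^+$ and repeats; you instead cut $P\uhp\kappa$ into $\kappa$ consecutive blocks of order type $\lambda$ (via the ordinal identity $\lambda\cdot\kappa=\kappa$) and pigeonhole against $|X|<\kappa$. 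Both arguments ultimately rest on the hereditary fact you isolate, which the paper uses silently here and elsewhere (e.g.\ when selecting the subpaths $R_\beta$ of $P$ in Lemma \ref{lego}): a convex interval of a path concentrated on $A$ is again a path concentrated on $A$, since an interval-limit of a convex block has no immediate $\prec_P$-predecessor either (that predecessor would lie in the block), hence is a genuine $\prec_P$-limit whose cofinal set of neighbours, respectively of $A$-neighbours, meets every final segment $[x',v)$ of the block. Your version buys two things: it needs no regular/singular case split (the paper's singular case with $\mu=|X|^+$ tacitly also requires $\mu>|\lambda|$ to leave room for a segment of order type $\lambda$, a point your decomposition sidesteps), and it yields $(1)\Rightarrow(3)$ directly, since all $\kappa$ blocks are produced at once rather than one path per choice of $X$.
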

\begin{proof}$(1)\Rightarrow (2)$: suppose that $X\in [V]^{<\kappa}$ and $\lambda<\kappa$. If $\kappa$ is regular then $X\cap P$ must be bounded in $P$ and hence an end segment of $P$ is a path disjoint from $X$ which has order type $\kappa$. If $\kappa$ is singular, then $\mu=|X|^+$ is less than $\kappa$ and we repeat the previous argument for $P\uhp \mu$.

(2)$\Rightarrow$(3): suppose that there is $\lambda<\kappa$ and a maximal family $\mc P$ of pairwise disjoint paths concentrated on $A$ of order type $\lambda$ so that $|\mc P|<\kappa$. We can apply (2) to $X=\bigcup \mc P\in [V]^{<{\kappa}}$ to extend $\mc P$ but this contradicts the maximality of $\mc P$. 

(3)$\Rightarrow$(2): suppose that $X\in [V]^{<\kappa}$ and $\lambda<\kappa$. Take a family $\mc P$ of pairwise disjoint paths concentrated on $A$ each of order type $\lambda$ so that $|\mc P|=\kappa$. There is $P\in \mc P$ so that $P\cap X=\emptyset$.

\end{proof}

Clearly, (2) does not imply (1) as $\utakk \kappa$ is easily satisfied in a graph which has no connected component of size $\kappa$. 

The next lemma will be our main tool in constructing paths.

\begin{lemma}\label{3line} Suppose that $\gr$ is a graph, $\kappa\geq \omega$, $A\in [V]^\kappa$ and
\begin{enumerate}
\item $A$ is $\sat \kappa$, and

\end{enumerate}
if $\kappa>\omega$ then
\begin{enumerate}
\setcounter{enumi}{1}

\item $A$ satisfies $\utakk \kappa$, and
\item there is a nice sequence of elementary submodels  $(M_\alpha)_{\alpha<\cf(\kappa)}$ for $\{A,G\}$ covering $A$ so that there is $x_\beta\in A\setm M_\beta,y_\beta\in V\setm M_\beta$ with $\{x_\beta,y_\beta\}\in E$ and $$|N_G(y_\beta)\cap A\cap M_\beta\setm M_{\alpha}|\geq \omega$$ for all $\alpha<\beta<\cf(\kappa)$.
\end{enumerate}
Then $A$ is covered by a path $P$ concentrated on $A$.
\end{lemma}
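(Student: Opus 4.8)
The plan is to reduce, via Lemma~\ref{lego}, to constructing a path that covers $A$ directly, with the elaborate machinery of Lemma~\ref{lego} applied only \emph{inside the blocks} of the chain. The case $\kappa=\oo$ needs nothing beyond assumption~(1): by Observation~\ref{pathconnobs} a countable $\sat\oo$ set is covered by a path of order type $\le\oo$, and such a path is vacuously concentrated on $A$ since it has no $\prec_P$-limit points. So from now on I assume $\kappa>\oo$ and fix the nice chain $(M_\alpha)_{\alpha<\cf(\kappa)}$ supplied by assumption~(3) together with the connecting points $x_\beta\in A\setm M_\beta$, $y_\beta\in V\setm M_\beta$.

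First I would produce, for every $\alpha<\cf(\kappa)$, a \emph{blockwise covering path} $B_\alpha$: a path concentrated on $A$, living in $V\cap(M_{\alpha+1}\setm M_\alpha)$, whose vertex set contains all of $A\cap(M_{\alpha+1}\setm M_\alpha)$ and whose first point I may prescribe in $A$. By Observation~\ref{elem} (using assumption~(1) and the chain of~(3)) the set $A\cap(M_{\alpha+1}\setm M_\alpha)$ is $\sat{|M_{\alpha+1}|}$ in the block. Since $A,G\in M_{\alpha+1}$, $|M_{\alpha+1}|\in M_{\alpha+1}$ and $M_\alpha\in M_{\alpha+1}$, assumption~(2) reflects into $M_{\alpha+1}$ and, as $|M_\alpha|<\kappa$, furnishes inside $M_{\alpha+1}$ a path of order type $|M_{\alpha+1}|$ concentrated on $A$ and disjoint from $M_\alpha$, i.e. lying in the block. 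Feeding this path into Lemma~\ref{lego}, applied in the induced subgraph on $V\cap(M_{\alpha+1}\setm M_\alpha)$ to the $\sat{|M_{\alpha+1}|}$ set $A\cap(M_{\alpha+1}\setm M_\alpha)$, upgrades it to the covering path $B_\alpha$ with prescribed first point. Since $|M_{\alpha+1}|<\kappa$ throughout, this is a genuine use of the already established Lemma~\ref{lego} and involves no circularity.

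Then I would assemble the blocks by recursion on $\beta<\cf(\kappa)$, building end-extending paths $P_\beta$, each concentrated on $A$, each with a designated last point in $A$, maintaining the invariant $A\cap M_{\beta+1}\subseteq V(P_\beta)$. At a successor stage $\beta=\alpha+1$ the path $P_\alpha$ has a top, so I connect this top to the prescribed first point of $B_\beta$ by a finite path supplied by $\sat\kappa$ (assumption~(1)), avoiding the fewer-than-$\kappa$ vertices used so far, and set $P_\beta=P_\alpha\smf(\text{finite path})\smf B_\beta$. At a limit stage $\beta$ the union $P_{<\beta}=\bigcup_{\alpha<\beta}P_\alpha$ already covers $A\cap M_\beta$ but has no top; here the connecting points enter. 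I append $y_\beta$ as the next vertex: by assumption~(3) its $A$-neighbours lie in $A\cap M_\beta\setm M_\alpha$ for every $\alpha<\beta$, so all of them are already on $P_{<\beta}$, and they occur cofinally below $y_\beta$, since any vertex of $P_{<\beta}$ lies in $P_\alpha$ for some $\alpha<\beta$ while infinitely many neighbours of $y_\beta$ lie outside $M_{\alpha+1}$ and are therefore introduced only after stage $\alpha$. Thus $y_\beta$ is a legitimate limit point and $P_{<\beta}\smf(y_\beta)$ is a path. I then append $x_\beta$, finitely connect it to $B_\beta$ using $\sat\kappa$, and set $P_\beta=P_{<\beta}\smf(y_\beta)\smf(x_\beta)\smf(\text{finite path})\smf B_\beta$. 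The desired path is $P=\bigcup_{\beta<\cf(\kappa)}P_\beta$: it covers $A$, and its only limit points are the internal limit points of the $B_\beta$ (handled by their concentration on $A$) and the junctions $y_\beta$ (handled above), so it is concentrated on $A$.

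I expect the main obstacle to be precisely the limit stages: ensuring that each $y_\beta$ is a valid limit point concentrated on $A$. This is what forces the recursion to carry the invariant $A\cap M_\beta\subseteq V(P_{<\beta})$ together with the order-respecting property that every vertex of $A\cap(M_{\gamma+1}\setm M_\gamma)$ is introduced no earlier than stage $\gamma$; the cofinality of the neighbours of $y_\beta$ rests entirely on this. A secondary nuisance is the vertex-disjointness bookkeeping: the stray points $x_\gamma$ ($\gamma<\beta$) produced at earlier limit stages need not lie in $M_{\gamma+1}$, so I must choose these points and the finite connectors — exploiting the abundance built into $\utakk\kappa$ and $\sat\kappa$ — so that they neither collide with the covering paths $B_\delta$ used later nor cause any vertex to be covered twice.
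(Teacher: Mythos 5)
Your overall architecture is the paper's: block paths inside $M_{\alpha+1}\setm M_\alpha$ obtained from Observation \ref{elem}, a reflected instance of $\utakk\kappa$ and Lemma \ref{lego}, glued along the chain with the pairs $x_\beta,y_\beta$ doing work at limits. The genuine gap is at the successor junctions, which is exactly where the paper's proof does its real work. Your block path $B_\alpha$ covers the infinite set $A\cap (M_{\alpha+1}\setm M_\alpha)$, so it has limit order type and \emph{no last point}; Lemma \ref{lego} outputs a path of order type $\nu$ (a limit ordinal, where $\nu=|A\cap (M_{\alpha+1}\setm M_\alpha)|$), not $\nu+1$. To give $P_\alpha$ ``a designated last point in $A$'' you must append a vertex $t$ sitting at a limit position, and for the result to remain a path concentrated on $A$ this $t$ must satisfy that $N_G(t)\cap A\cap V(B_\alpha)$ is cofinal in $B_\alpha$. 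Such a $t$ is not supplied by $\sat{\kappa}$ alone, and it cannot in general be taken from $A$: if $G$ is bipartite with $A$ independent (a case the lemma certainly covers, e.g.\ the main class of $\halff\kappa$), no vertex of $A$ is a limit point of any path concentrated on $A$. Producing $t$ --- and arranging, via the ``cofinal set $C$'' clause of Lemma \ref{lego}, that the cofinal set of $B_\alpha$ lies inside $N_G(t)\cap A$ --- forces you back to assumptions (2) or (3): in the paper this is the point $r$ with $|N_G(r)\cap A\cap M_{\alpha+1}\setm M_\alpha|\geq \cf(\nu)$ (found as a limit point of a long concentrated path when $\kappa$ is a limit cardinal, or by reflecting a consequence of (3) when $\kappa=\nu^+$), together with the $\omega$-tail $R$ threaded through $N_G(y_{\alpha+1})\cap A$. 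Your proposal reserves the connecting pairs $x_\beta,y_\beta$ for limit stages only; in fact a ``$y$-like'' vertex is needed at the top of \emph{every} block, which is precisely why the paper carries the invariant that $N_G(y_{\alpha+1})\cap A\cap M_{\alpha+1}\setm M_\alpha$ is cofinal in $P_{\alpha+1}$ at every stage.

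A secondary problem, which you flag but do not resolve: the finite connectors you extract from $\sat\kappa$ at successor stages can be chosen to avoid the $<\kappa$ vertices already used, but not a set of size $\kappa$ such as $\bigcup_\delta V(B_\delta)$ or $\bigcup_\beta \bigl(N_G(y_\beta)\cap A\bigr)$; so choosing them to miss all later blocks does not follow from the ``abundance'' of $\sat\kappa$. This part is repairable --- reflect $\sat\kappa$ into $M_{\beta+1}$ and route each connector inside $M_{\beta+1}\setm M_\beta$ so that its stray vertices stay in the current block, after which your limit-stage cofinality argument for appending $y_\beta$ goes through --- but as written the avoidance claim is unjustified, and without it both the disjointness of the decomposition and the legitimacy of $y_\beta$ as a limit point are at risk.
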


Note that condition (3) only makes sense for $\kappa>\omega$; indeed, if $A$ is countable and $A\in M_1$ then $A\subseteq M_1$ as well. However, every countably infinite $\sat\omega$ set $A$ is covered by a path of order type $\omega$.

\begin{proof} If $\kappa=\omega$ then we can apply Observation \ref{pathconnobs} to finish the proof. Suppose $\kappa>\omega$.

Let us fix $(M_\alpha)_{\alpha<\cf(\kappa)}$ and $\{x_\alpha,y_\alpha:\alpha<\cf(\kappa)\}$ as in clause (3); we can suppose that $x_\alpha,y_\alpha\in M_{\alpha+1}$. Let $A_\alpha=M_\alpha\cap A$ for $\alpha<\cf(\kappa)$. It suffices to construct a sequence of paths $\{P_\alpha:\alpha<\cf(\kappa)\}$ concentrated on $A$ so that
\begin{enumerate}[(i)]
\item $P_\beta$ end extends $P_\alpha$,
\item $A_\alpha \subseteq P_\alpha\subseteq M_\alpha$,
\item $N_G(y_{\alpha+1})\cap A\cap M_{\alpha+1}\setm M_{\alpha}$ is cofinal in $P_{\alpha+1}$
\end{enumerate}
for all $\alpha<\beta<\cf(\kappa)$. We set $P=\bigcup\{P_\alpha:\alpha<\cf(\kappa)\}$ which will finish the proof.

Suppose we constructed $P_\alpha$ for $\alpha<\beta$ as above; if $\beta$ is a limit ordinal then we set $P_\beta=\bigcup\{P_\alpha:\alpha<\beta\}$. Suppose that $\beta=\alpha +1$; note that $P_\alpha\smf (y_\alpha,x_\alpha)$ is still a path regardless whether $\alpha$ is a limit or successor by (ii) and (iii). It suffices to find a path $S\subs M_{\alpha+1}\setm M_\alpha$ concentrated on $A$ starting at $x_\alpha$ so that  $N(y_{\alpha+1})\cap A\cap M_{\alpha+1}\setm M_{\alpha}$ is cofinal in $S$ and $A_{\alpha+1}\setm A_\alpha\subs S$; indeed, we set $P_{\alpha+1}=P_\alpha\smf (y_\alpha)\smf S$  which finishes the proof. 

We will essentially repeat the proof of Lemma \ref{lego} in the regular case. Recall that $N(y_{\alpha+1})\cap A\cap M_{\alpha+1}\setm M_{\alpha}$ is infinite. First, find a path $R$ of order type $\omega$  in $M_{\alpha+1}\setm (M_{\alpha}\cup \{x_\alpha,y_\alpha\})$ so that $$|R\cap N_G(y_{\alpha+1})\cap A\cap M_{\alpha+1}\setm M_{\alpha}|\geq \omega$$ and $R$ starts at a point $r$ so that $|N_G(r)\cap A_{\alpha+1}\setm A_\alpha|\geq \cf(\nu)$ where $\nu=|A_{\alpha+1}\setm A_\alpha|$. 
The only difficulty here is to find such an $r$; if $\kappa$ is limit we can use $\utakk \kappa$ to find a path $Q\in M_{\alpha+1}$ concentrated on $A$ of size $|M_{\alpha+1}|^+$ and $r$ can be chosen to be the first $|\cf(\nu)|$-limit of $Q$ (note that $\nu<|M_{\alpha+1}|^+$). A finite segment of $Q$ connects $r$ to some $r'\in Q \cap A$ and we continue to construct $R$ from this finite path. If $\kappa$ is a successor then we must have $\kappa=\nu^+$ (by the definition of a nice sequence of models)  and note that $|N_G(y_{\alpha+\nu})\cap A\setm A_\alpha|\geq \cf(\nu)$ for any $\alpha<\kappa$. Reflecting this property into $M_{\alpha+1}$ we find $y,x\in M_{\alpha+1}\setm (M_{\alpha}\cup \{x_\alpha,y_\alpha\})$ so that $\{y,x\}\in E$, $x\in A$ and $|N(y)\cap A_{\alpha+1}\setm A_\alpha|\geq \cf(\nu)$. We can start $R$ by $y$ and $x$ and connect the rest of the points using that $A$ is $\sat \kappa$ in $M_{\alpha+1}\setm M_\alpha$.

Now, we construct $S$ with the above required properties so that it has order type  $\nu+\omega$ and $R$ is the last $\omega$ many points of $S$. Indeed, $\utakk \kappa$ implies that $G\uhp (V\cap M_{\alpha+1}\setm (M_\alpha\cup R\cup \{y_\alpha\}))$ contains a path of order type $\nu$ concentrated on $A_{\alpha+1}\setm A_\alpha$ so by applying Lemma \ref{lego} we can find a path $S'$  starting at $x_\alpha$, concentrated on $A$ and of order type  $\nu$ which covers $A_{\alpha+1}\setm (A_\alpha\cup R)$ while $N_G(r)\cap A$ is cofinal in $S'$; we set $S=S'\smf R$ which finishes the proof.

\end{proof}

\section{The existence of monochromatic paths}\label{secdich}

Our goal in this section is to find large monochromatic paths in certain edge coloured graphs $G$ by finding a set $A\subseteq V(G)$ which satisfies all three conditions of Lemma \ref{3line} in a colour.

\subsection{Preparations} As we stated in the introduction, we aim to deal with certain graphs which are almost complete:

\begin{definition} We call a graph $G=(V,E)$ \emph{$\kappa$-complete} iff $|V|\geq \kappa$ and $$|V\setm N_G(x)|<\kappa$$ for all $x\in V$.
\end{definition}

Let us start with some basic observations.

\begin{obs}\label{complobs} \begin{enumerate}
	\item Any $\kappa$-complete graph $\gr$ is $|V|$-complete.
	\item If $\gr$ is $\kappa$-complete then any subset $X\in[V]^\kappa$ spans a $\kappa$-complete subgraph.
\end{enumerate}
\end{obs}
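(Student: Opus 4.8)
Both parts follow by unwinding the definition of $\kappa$-completeness together with a short cardinal computation; I expect no genuine obstacle beyond careful bookkeeping of neighbourhoods in the induced subgraph for part (2).

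For part (1), I would use the two defining clauses directly. Since $\gr$ is $\kappa$-complete we have $|V|\geq \kappa$, i.e. $\kappa\leq |V|$, and $|V\setm N_G(x)|<\kappa$ for every $x\in V$. Chaining these two facts gives $|V\setm N_G(x)|<\kappa\leq |V|$, hence $|V\setm N_G(x)|<|V|$ for all $x$. Combined with the trivial inequality $|V|\geq |V|$, this is precisely the assertion that $G$ is $|V|$-complete.

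For part (2), fix $X\in [V]^\kappa$ and let $H=G\uhp X$ be the induced subgraph. The size condition $|X|\geq \kappa$ holds since $|X|=\kappa$, so the only content is to bound the non-neighbourhoods inside $H$. The key point is that neighbourhoods shrink correctly under taking induced subgraphs: for $x\in X$ we have $N_H(x)=N_G(x)\cap X$, so a vertex $y\in X$ lies outside $N_H(x)$ exactly when $y\notin N_G(x)$. Consequently
$$X\setm N_H(x)=X\cap (V\setm N_G(x))\subseteq V\setm N_G(x),$$
and by monotonicity of cardinality together with the $\kappa$-completeness of $G$ we obtain $|X\setm N_H(x)|\leq |V\setm N_G(x)|<\kappa$. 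Since this holds for every $x\in X$, the subgraph $H$ is $\kappa$-complete, as required.
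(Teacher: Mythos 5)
Your proposal is correct and follows exactly the paper's argument: part (1) chains $|V\setminus N_G(x)|<\kappa\leq |V|$, and part (2) bounds $|X\setminus N_H(x)|\leq |V\setminus N_G(x)|<\kappa$ via the inclusion of non-neighbourhoods. The extra bookkeeping about induced subgraphs is fine but adds nothing beyond what the paper's one-line proof already contains.
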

\begin{proof}(1) If $G$ is $\kappa$-complete then $|V|\geq \kappa$ and hence $|V\setm N_G(x)|<\kappa\leq |V|$ for all $x\in V$. Thus $G$ is $|V|$-complete.

(2) If $X\subseteq V$ has size $\kappa$ then $|X\setm N_G(x)|\leq |V\setm N_G(x)|<\kappa$.
\end{proof}

To prove our decomposition result about $\kappa$-complete graphs, we need to look at edge-colourings of certain large bipartite graphs. Let $\kappa$ be a cardinal. We let $K_{\kappa,\kappa}$ denote the complete bipartite graph with classes of size $\kappa$.

 We let $H_{\kappa,\kappa}$ denote the graph  $(\kappa\times \{0\}\cup \kappa\times \{1\},E)$ where $$\{(\alpha,i),(\beta,j)\}\in E \iff i=0,j=1 \text{ and } \alpha<\beta\in\kappa.$$

$H_{\kappa,\kappa}$ is a bipartite graph and we call the set of vertices $\kappa\times\{0\}$ in $H_{\kappa,\kappa}$  \emph{the main class of $H_{\kappa,\kappa}$}. If $H$ denotes a copy of $\halff \kappa$ then let $H\uhp \alpha$ stand for $\halff \kappa\uhp \alpha\times 2$ for any $\alpha<\kappa$.\\

In order to carry out our proofs we need to introduce a class of graphs closely related to the graph $\halff \kappa$.

\begin{definition} We say that a graph $G=(V,E)$ is of \emph{type $\halff \kappa$} iff $V=A\cup B$ where $A=\{a_\xi:\xi<\kappa\},B=\{b_\xi:\xi<\kappa\}$ are 1-1 enumerations and $$\{a,b\}\in E(G) \text{ if } a=a_\xi, b=b_\zeta \text{ for some }\xi\leq \zeta<\kappa.$$ 
\end{definition}

We will call $A$ the \emph{main class} of $G$ and $(A,B)$ with the inherited ordering is the $\halff \kappa$-decomposition of $G$. As before, we use the notation $G\uhp \lambda$ to denote $G\uhp\{a_\xi,b_\xi:\xi<\lambda\}$.

We will mainly apply this definition in two cases: when the classes $A$ and $B$ of the graph $G$ of type $\halff \kappa$ are disjoint (i.e. $G$ is isomorphic to \emph{the graph $\halff \kappa$}) and when the main class equals $V(G)$.

\begin{obs}\label{Hbuild} Suppose that $\gr$ is a graph, $A\in [V]^\kappa$ for some cardinal $\kappa$ and $A$ is the increasing union of sets $\{A_\alpha:\alpha<\cf(\kappa)\}$ where $|A_\alpha|<\kappa$ and $$|N[A_\alpha]|=\kappa$$ for all $\alpha<\cf(\kappa)$. Then there is a subgraph $H$ of $G$ of type $\halff \kappa$ with main class $A$. 
\end{obs}
\begin{proof} Find an enumeration $A=\{a_\xi:\xi<\kappa\}$ so that for every $\zeta<\kappa$ there is $\alpha_\zeta<\cf(\kappa)$ with $\{a_\xi:\xi\leq\zeta\}\subseteq A_{\alpha_\zeta}$. Hence $$|N[\{a_\xi:\xi\leq\zeta\}]|\geq |N[A_{\alpha_\zeta}]|=\kappa.$$

Now, we can inductively find vertices $b_\zeta\in N[\{a_\xi:\xi\leq \zeta\}]\setm\{b_\xi:\xi<\zeta\}$ for all $\zeta<\kappa$ and hence $A\cup \{b_\xi:\xi<\kappa\}$ is the type $\halff \kappa$ subgraph.
\end{proof}

\begin{obs}\label{comptype} Suppose that $G=(V,E)$ is of type $\halff \kappa$ with main class $V$. Then there is a $\kappa$-complete graph embedded in $G$.

Conversely, if $\gr$ is a $\kappa$-complete graph of size $\kappa$ then $G$ is of type $\halff \kappa$ with main class $V$.
\end{obs}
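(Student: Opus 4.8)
The plan is to treat the two implications separately: the forward direction is a one-line computation, while the converse is an application of Observation~\ref{Hbuild} after a suitable decomposition of $V$ is arranged.

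For the forward direction, fix a $\halff\kappa$-decomposition $(A,B)$ of $G$ with main class $A=V=\{a_\xi:\xi<\kappa\}$ and $B=\{b_\zeta:\zeta<\kappa\}$, so that $\{a_\xi,b_\zeta\}\in E$ whenever $\xi\le\zeta$. I would show that the induced subgraph $G\uhp B$ is already $\kappa$-complete. Indeed, fix $b_\zeta\in B$ and let $\eta<\kappa$ be the unique index with $b_\zeta=a_\eta$ (this makes sense since $B\subseteq V=A$). For every $\zeta'\ge\eta$ with $\zeta'\neq\zeta$ the pair $\{a_\eta,b_{\zeta'}\}=\{b_\zeta,b_{\zeta'}\}$ is a guaranteed edge, so the non-neighbours of $b_\zeta$ inside $B$ lie in $\{b_{\zeta'}:\zeta'<\eta\}$, a set of size $|\eta|<\kappa$. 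As $|B|=\kappa$, this proves $G\uhp B$ is a $\kappa$-complete graph embedded in $G$. (Note it is $B$, not $V$, that carries the $\kappa$-complete subgraph, since $B$ may be a proper subset of $V$.)

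For the converse, suppose $G$ is $\kappa$-complete with $|V|=\kappa$ and write $S(v)=V\setm N_G(v)$, so $|S(v)|<\kappa$ for all $v$. Since $V\setm N_G[A_\alpha]=\bigcup_{v\in A_\alpha}S(v)$, it suffices to produce an increasing union $V=\bigcup_{\alpha<\cf(\kappa)}A_\alpha$ with $|A_\alpha|<\kappa$ and $\bigl|\bigcup_{v\in A_\alpha}S(v)\bigr|<\kappa$ for all $\alpha$, and then quote Observation~\ref{Hbuild} with $A=V$. If $\kappa$ is regular, fix any enumeration $V=\{v_\eta:\eta<\kappa\}$ and take $A_\alpha=\{v_\eta:\eta<\alpha\}$; then $|A_\alpha|<\kappa$ and a union of fewer than $\cf(\kappa)=\kappa$ sets each of size $<\kappa$ is again of size $<\kappa$. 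If $\kappa$ is singular, fix an increasing cofinal sequence of regular cardinals $(\kappa_i)_{i<\cf(\kappa)}$ together with an enumeration $V=\{v_\eta:\eta<\kappa\}$ and set
$$A_i=\{v_\eta:\eta<\kappa_i\text{ and }|S(v_\eta)|<\kappa_i\}.$$
Each $A_i$ has size $\le\kappa_i<\kappa$, the sequence is increasing, its union is $V$ (every $v_\eta$ enters $A_i$ as soon as $\kappa_i>\max(\eta,|S(v_\eta)|)$), and $\bigl|\bigcup_{v\in A_i}S(v)\bigr|\le\kappa_i\cdot\kappa_i=\kappa_i<\kappa$. In either case Observation~\ref{Hbuild} yields a subgraph $H\le G$ of type $\halff\kappa$ with main class $A=V$, and since being of type $\halff\kappa$ only demands that the guaranteed edges be present while $E(H)\subseteq E(G)$, the same two enumerations witness that $G$ itself is of type $\halff\kappa$ with main class $V$.

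The step I expect to require the most care is the singular case: a careless enumeration could place vertices whose non-neighbourhoods are cofinal in $\kappa$ into a short initial segment, making some $\bigcup_{v\in A_\alpha}S(v)$ of size $\kappa$ and destroying the hypothesis $|N_G[A_\alpha]|=\kappa$ of Observation~\ref{Hbuild}. Stratifying $V$ by the size of the non-neighbourhoods, as above, is precisely what keeps every $A_\alpha$ small in this sense, and it is the only place where the regularity of $\kappa$ genuinely matters.
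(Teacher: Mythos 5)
Your proof is correct and follows essentially the same route as the paper: the forward direction exhibits $G\uhp B$ as the $\kappa$-complete subgraph, and the converse applies Observation~\ref{Hbuild} to initial segments of an enumeration when $\kappa$ is regular and to the stratification $A_\alpha=\{v:\,\text{index}<\kappa_\alpha,\ |V\setm N(v)|<\kappa_\alpha\}$ when $\kappa$ is singular, which is exactly the paper's decomposition. Your extra remarks (spelling out why non-neighbours of $b_\zeta$ in $B$ sit below its index, and why a subgraph of type $\halff\kappa$ on main class $V$ makes $G$ itself of type $\halff\kappa$) are correct fillings-in of details the paper leaves implicit.
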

\begin{proof} If $(A,B)$ is the $\halff\kappa $ decomposition of $G$ then we have $B\subseteq A=V$ and $G\uhp B$ is the $\kappa$-complete subgraph.

Second, suppose that $G$ is $\kappa$-complete and list $V$ in type $\kappa$ as $A=\{a_\xi:\xi<\kappa\}$. If $\kappa$ is regular then $N[\{a_\xi:\xi<\zeta\}]$ has size $\kappa$ for all $\zeta<\kappa$ hence Observation \ref{Hbuild} finishes the proof.

If $\kappa$ is singular then let us take an increasing, continuous and cofinal sequence $\{\kappa_\alpha:\alpha<\cf(\kappa)\}$ in $\kappa$ and let $A_\alpha=\{a_\xi:\xi<\kappa_\alpha, |V\setm N(a_\xi)|<\kappa_\alpha\}\in [V]^{<\kappa}$. Note that $V=\bigcup_{\alpha<\cf(\kappa)}A_\alpha$ is an increasing union of sets of size $<\kappa$ and $N[A_\alpha]$ has size $\kappa$. Again, Observation \ref{Hbuild} can be applied which finishes the proof.
\end{proof}

 \begin{obs}\label{onecol} Suppose that $H$ is of type $\halff \kappa$ for some cardinal $\kappa$. Then there is a path of order type $\kappa$ which covers and is concentrated on the main class of $H$.
\end{obs}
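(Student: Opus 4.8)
The plan is to reduce the statement to Lemma \ref{lego}. Write $(A,B)$ for the $\halff \kappa$-decomposition of $H$, with $A=\{a_\xi:\xi<\kappa\}$ the main class and $B=\{b_\xi:\xi<\kappa\}$, so that $a_\xi\sim b_\zeta$ whenever $\xi\le\zeta$. It suffices to establish two things: that $A$ is $\sat\kappa$ in $H$, and that $H$ contains a \emph{single} path of order type $\kappa$ concentrated on $A$ (not necessarily covering it). Granting these, the implication $(1)\Rightarrow(2)$ of Lemma \ref{lego}, applied to the $\sat\kappa$ set $A$, produces a path of order type $\kappa$ concentrated on $A$ which moreover covers $A$, which is exactly the assertion.

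The $\sat\kappa$ claim is immediate from the adjacency rule: given $a_\xi\ne a_{\xi'}$ in $A$, every $b_\zeta$ with $\zeta\ge\max(\xi,\xi')$ and $b_\zeta\notin\{a_\xi,a_{\xi'}\}$ yields a length-two path $a_\xi\, b_\zeta\, a_{\xi'}$ (both edges present since $\xi,\xi'\le\zeta$); as the $b_\zeta$ are distinct there are $\kappa$-many of these meeting only in $\{a_\xi,a_{\xi'}\}$, so $A$ is $\sat\kappa$.

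For the concentrated path I would first dispose of the case where $A$ and $B$ are disjoint (in particular $H\cong\halff\kappa$), where the path can be written down explicitly: order $V(H)$ in the blocks $\langle b_\xi,a_\xi\rangle_{\xi<\kappa}$, i.e. $b_\xi\prec a_\xi\prec b_{\xi'}$ for $\xi<\xi'$. Every proper initial segment is of size $<\kappa$, so the order type is $\kappa$; the successor edges $\{b_\xi,a_\xi\}$ and $\{a_\xi,b_{\xi+1}\}$ are present because $\xi\le\xi$ and $\xi\le\xi+1$; and a $\prec$-limit is some $b_\lambda$ with $\lambda$ a limit ordinal, below which every $a_\eta$ $(\eta<\lambda)$ is a neighbour of $b_\lambda$ lying cofinally, which simultaneously verifies the path property and concentration on $A$ (as $a_\eta\in A$). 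In fact this path already covers $A$, so Lemma \ref{lego} is not even needed here.

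The only real work is the overlapping case, in particular the main application where the main class equals $V(H)$ and every $b_\zeta$ lies in $A$; then the naive interleaving repeats vertices. Here I would build the concentrated path by transfinite recursion in alternating blocks $\langle b_{\zeta_\mu},a_{\eta_\mu}\rangle$. At a successor step, with current last vertex $a_{\eta_\mu}$ and fewer than $\kappa$ vertices used, I pick a main-class index $\eta$ whose vertex $a_\eta$ is unused (possible since $<\kappa$ indices are used) and then an unused connector $b_\zeta$, distinct from $a_\eta$, with $\zeta\ge\max(\eta_\mu,\eta)$ (possible since all but $<\kappa$ connectors qualify); appending $b_\zeta,a_\eta$ preserves the block structure and all required edges, as $\eta_\mu,\eta\le\zeta$. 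At a limit stage I append a connector $b_\zeta$ with $\zeta\ge\sup_\mu\eta_\mu$, so that the cofinally many $a_{\eta_\mu}$ below it are all its neighbours, giving the path property and concentration on $A$. The main obstacle is precisely this limit bookkeeping when $\kappa$ is singular, where $\sup_\mu\eta_\mu$ could reach $\kappa$ prematurely; I expect to handle it exactly as in the singular case of Lemma \ref{lego}, carrying out the recursion along a nice $\kappa$-chain of elementary submodels (equivalently, keeping the indices $\eta_\mu$ inside an increasing sequence of bounds $\kappa_\rho<\kappa$). With the concentrated path in hand, Lemma \ref{lego} upgrades it to a path of order type $\kappa$ covering and concentrated on $A$, as required.
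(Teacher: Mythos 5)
Your argument is correct in outline but follows a genuinely different route from the paper's. The paper proves Observation \ref{onecol} by a single direct transfinite construction: it builds an increasing chain of paths $P_\alpha$ indexed by the ordinals $\alpha<\kappa$ that are limits of limits, each $P_\alpha$ confined to $H\uhp \alpha+\omega+\omega$, covering $a_\alpha$, and extended in interleaved $\omega$-blocks of fresh $a$'s and $b$'s (with explicit skipping of already-used vertices to handle the overlap of the two classes). You instead factor the problem through Lemma \ref{lego}: since the main class $A$ is $\sat \kappa$ (your length-two-path argument is exactly right), it suffices to exhibit \emph{some} concentrated path of order type $\kappa$, and the covering, which is the source of most of the bookkeeping in the paper's proof (the insertion of $a_\beta$ via $P^-_\beta$, the tracking of $\delta$), comes for free from the lemma. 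Since Lemma \ref{lego} is proved before this observation and its proof does not depend on it, there is no circularity, and your reduction is a legitimate simplification at the cost of invoking a heavier lemma.

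The one loose end is the singular-$\kappa$ bookkeeping in your recursion for the concentrated path, which you flag but do not carry out. It does need an argument --- at a limit stage $\mu$ of cofinality $\cf(\kappa)$ a careless choice of indices would make $\sup_\rho\eta_\rho=\kappa$ and leave no admissible connector --- but elementary submodels are overkill here. The second fix you mention is the right one and is precisely the paper's device: if at every step you take the minimal admissible unused indices (choosing the next main-class vertex \emph{before} the limit connector, so that the connector index can dominate it), then an easy induction shows that after $\mu$ steps all indices used lie below an ordinal of cardinality at most $|\mu|+\omega<\kappa$, so the recursion never stalls and the resulting order type is $2\cdot\kappa=\kappa$. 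Note also that in the overlapping case each used vertex blocks at most one index of either enumeration, so minimal admissible choices always exist; this is the same collision-avoidance the paper builds into its $\omega$-block construction. With that paragraph written out, your proof is complete.
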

This result is trivial for the graph $\halff \kappa$, however we have to be somewhat cautious when the two classes of $H$ intersect.

\begin{proof} Let $A=\{a_\xi:\xi<\kappa\},B=\{b_\xi:\xi<\kappa\}$ witness that $H$ is of type $\halff \kappa$. We define an increasing sequence of paths $\{P_\alpha:\alpha\in D\}$ where $D=\{\alpha<\kappa: \alpha \text{ is a limit of limits}\}$ by induction on $\alpha$ such that
\begin{enumerate}
	\item $P_\alpha$ is a path concentrated on $A$,
	\item $P_\alpha\cap A$ is a cofinal subset of $P_\alpha$,
	\item $P_\alpha\subseteq H\uhp \alpha+\omega+\omega$, and
	\item $P_\alpha$ covers $a_\alpha$
\end{enumerate} for all $\alpha\in D$.

Let us define $P_0$ inductively as $(p^0_n:n\in\omega)$ where 
\[
 p^0_n =
  \begin{cases}
	 a_{l_n} & \text{where } l_n=\min\{l\in\omega:a_{l}\notin\{p^0_m:m<n\}\} \text{ if } n \text{ is even,}\\
	  b_{\omega+k_n} & \text{where } k_n=\min\{k\in\omega:b_{\omega+k}\notin\{p^0_m:m<n\}\} \text{ if } n \text{ is odd.}

  \end{cases}
\]

Suppose that $P_\alpha$ is defined for $\alpha<\beta$ where $\beta\in D$ and let $P_{<\beta}=\bigcup\{P_\alpha:\alpha\in \beta\cap D\}$. Let $$\delta=\min\{\zeta\in\kappa: (P_{<\beta}\cup\{a_\beta\})\subseteq H\uhp \zeta\}.$$ 

It is easy to see that $\delta\leq \beta$.
Observe that $P_{<\beta}\smf (b_{\delta+\omega})$ is a path concentrated on $A$. Let
\[
 P^-_\beta =
  \begin{cases}
	 P_{<\beta}\smf (b_{\delta+\omega}) & \text{if } a_\beta\in  P_{<\beta}\smf (b_{\delta+\omega})\\
	   P_{<\beta}\smf (b_{\delta+\omega},a_\beta,b_{\delta+\omega+1}) & \text{if }  a_\beta\notin  P_{<\beta}\smf (b_{\delta+\omega}).
		
   \end{cases}
\]

By induction on $n<\omega$, define
\[
 p^\beta_n =
  \begin{cases}
	 
	   a_{\delta+l_n} & \text{where } l_n=\min\{l\in\omega:a_{\delta+l}\notin P^-_\beta\cup\{p^\beta_m:m<n\}\} \text{ if } n \text{ is even,}\\
		 b_{\delta+\omega+k_n} & \text{where } k_n=\min\{k\in\omega:b_{\delta+\omega+k}\notin P^-_\beta\cup\{p^\beta_m:m<n\}\} \text{ if } n \text{ is odd.}
		
   \end{cases}
\]
We let $$P_\beta=P^-_\beta\smf(p^\beta_n)_{n\in\omega}.$$ Finally, set $P=\bigcup\{P_\alpha:\alpha\in D\}$ and note that $P$ is a path concentrated on $A$ which also covers $A$.

\end{proof}

Let $(\ih)_{\kappa,r}$ denote the statement that 

\begin{center}

\begin{framed}
  for any $r$-edge colouring of a graph $G$ of type $\halff \kappa$ with main class $A$, there is an $X\subseteq A$ of size $\kappa$ and $i<r$ so that $X$ satisfies all three conditions of Lemma \ref{3line} in colour $i$. 
\end{framed}

\end{center}

Let $(\ih)_{\kappa}$ denote 
\begin{center}
	\begin{framed}
	$(\ih)_{\kappa,r}$ holds for all $r\in\omega$.
\end{framed}
\end{center}

Note that in Lemma \ref{uftrick} we showed that $(\ih)_{\omega}$ holds. Furthermore:
 
\begin{obs}\label{baseobs} For any graph $G$ of type $\halff \kappa$, the main class of $G$ satisfies all three conditions of Lemma \ref{3line}. In particular, $(\ih)_{\kappa,1}$ holds for all $\kappa$.
\end{obs}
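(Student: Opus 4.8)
The plan is to verify the three conditions of Lemma \ref{3line} directly for the main class $A=\{a_\xi:\xi<\kappa\}$ (with $B=\{b_\xi:\xi<\kappa\}$ the second enumeration), exploiting only the defining adjacency $a_\xi\sim b_\zeta$ whenever $\xi\le\zeta$. Condition (1), that $A$ is $\sat\kappa$, I would get for free: given $a_\xi\ne a_\eta$ with $\xi<\eta$, every $b_\zeta$ with $\zeta\ge\eta$ is joined to both, so the length-two paths $a_\xi,b_\zeta,a_\eta$ (for the $\kappa$-many admissible $\zeta$, discarding the at most two $\zeta$ with $b_\zeta\in\{a_\xi,a_\eta\}$) have pairwise distinct interior vertices and thus witness $\sat\kappa$. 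If $\kappa=\omega$ this is all that is required, so I assume $\kappa>\omega$. Condition (2), $\utakk\kappa$, I would deduce immediately from the earlier machinery: Observation \ref{onecol} produces a path of order type $\kappa$ concentrated on $A$, and then the implication $(1)\Rightarrow(3)$ of Observation \ref{utakeq} gives $\utakk\kappa$ at once.

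The real work is condition (3). First I would fix a nice $\kappa$-chain $(M_\alpha)_{\alpha<\cf(\kappa)}$ covering $A$, arranging that not only $A,G$ but also the enumerations $\xi\mapsto a_\xi$, $\xi\mapsto b_\xi$ and $\kappa$ lie in $M_1$ (it is still a nice sequence for $\{A,G\}$ as demanded); the point is the book-keeping equivalence $a_\xi\in M_\gamma\iff\xi\in M_\gamma\iff b_\xi\in M_\gamma$. With this the search for $x_\beta\in A\setm M_\beta$ and $y_\beta\in V\setm M_\beta$ reduces to choosing a single ordinal $\zeta\notin M_\beta\cap\kappa$: I would take $y_\beta=b_\zeta$, $x_\beta=a_\zeta$ (adjacent since $\zeta\le\zeta$, and both outside $M_\beta$), and must then ensure $b_\zeta$ is adjacent to infinitely many $a_\xi$ in $M_\beta\setm M_\alpha$ for every $\alpha<\beta$ simultaneously, i.e.\ infinitely many $\xi\le\zeta$ with $\xi\in(M_\beta\setm M_\alpha)\cap\kappa$.

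The main obstacle is choosing this single $\zeta$ uniformly in $\alpha$, and here the regular and singular cases genuinely diverge. When $\kappa$ is regular, $M_\beta\cap\kappa$ is an ordinal $\sigma_\beta<\kappa$ (clause (5) of niceness), so $\zeta=\sigma_\beta$ works perfectly: $b_{\sigma_\beta}$ is adjacent to all of $A\cap M_\beta=\{a_\xi:\xi<\sigma_\beta\}$, and since $\sigma_\beta$ is a limit ordinal with $\sigma_\alpha=M_\alpha\cap\kappa<\sigma_\beta$, the difference $A\cap M_\beta\setm M_\alpha=\{a_\xi:\sigma_\alpha\le\xi<\sigma_\beta\}$ is automatically infinite. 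When $\kappa$ is singular this argument breaks, because $M_\beta\cap\kappa$ need not be an ordinal and may even be cofinal in $\kappa$, so no single $b_\zeta$ can dominate all of $A\cap M_\beta$. I would circumvent this by not attempting to dominate everything: since the approximating cardinals $\kappa_\alpha$ are strictly increasing with $\kappa_\beta\subseteq M_\beta$ and $|M_\alpha|=\kappa_\alpha<\kappa_\beta$, the initial block $\{\xi<\kappa_\beta:\xi\notin M_\alpha\}$ already has size $\kappa_\beta\ge\omega$ for each $\alpha<\beta$; hence picking any $\zeta\in\kappa\setm M_\beta$ with $\zeta\ge\kappa_\beta$ makes $b_\zeta$ adjacent to all these infinitely many $a_\xi$ at once. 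Either way this furnishes $(x_\beta,y_\beta)$ and establishes (3).

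Finally $(\ih)_{\kappa,1}$ is an immediate corollary: a $1$-edge colouring uses only the colour $0$, in which ``graph property in colour $0$'' is literally ``graph property in $G$'', so $X=A$ witnesses the statement.
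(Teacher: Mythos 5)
Your proof is correct and follows essentially the same route as the paper: conditions (1) and (2) are handled identically (the paper also cites Observations \ref{onecol} and \ref{utakeq}), and for condition (3) the paper likewise picks a single index, namely $\xi_\beta=\min(\kappa\setminus M_\beta)$, and sets $x_\beta=a_{\xi_\beta}$, $y_\beta=b_{\xi_\beta}$. The only cosmetic difference is that the paper avoids your regular/singular case split by showing $\min(\kappa\setminus M_\alpha)+\omega<\xi_\beta$ via elementarity, whereas you use $M_\beta\cap\kappa\in\kappa$ in the regular case and count $|\kappa_\beta\setminus M_\alpha|=\kappa_\beta$ in the singular case; both arguments are valid.
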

\begin{proof} Fix $G$ of type $\halff \kappa$ with main class $A=\{a_\xi:\xi<\kappa\}$  and second class $B=\{b_\xi:\xi<\kappa\}$; we suppose $\kappa>\omega$. $A$ is clearly $\sat \kappa$ and $\utakk \kappa$ is satisfied by Observation \ref{onecol} and Observation \ref{utakeq}. Now, for the third property take any nice sequence of  elementary submodels  $(M_\alpha)_{\alpha<\cf(\kappa)}$ covering $A$ with $A,G\in M_1$ and suppose that the enumeration $\{a_\xi:\xi<\kappa\}$ is also in $M_1$. Let $x_\beta=a_{\xi_\beta},y_\beta=b_{\xi_\beta}$ where $\xi_\beta=\min(\kappa\setm M_\beta)$. 

Now $\{x_\beta,y_\beta\}\in E$, $x_\beta,y_\beta\in M_{\beta+1}\setm M_\beta$ and we need to show that $$|N(y_\beta)\cap A\cap M_\beta\setm M_\alpha|\geq \omega$$ for all $\alpha<\beta$. Fix $\alpha<\beta$ and look at $\xi_\alpha=\min (\kappa\setm M_\alpha)$. As $\xi_\alpha<\xi_\alpha+\omega<\xi_\beta$, we get that $$\{a_{\xi_\alpha+i}:i<\omega\}\subseteq N(y_\beta)\cap A\cap M_\beta\setm M_\alpha.$$

\end{proof}

From now on in this section, we work towards showing that  $(\ih)_{\kappa}$ holds for all $\kappa$. Note that once $(\ih)_\kappa$ is proved, Lemma \ref{3line} implies that every finite-edge coloured graph $G$ of type $\halff\kappa$ contains a monochromatic path of size $\kappa$.

\subsection{The first main step}

We wish to determine if a subset $A$ of an edge coloured graph satisfies condition (3) of Lemma \ref{3line} in a given colour.

\begin{lemma}\label{ml1}  Let $\kappa$ be an uncountable cardinal. Suppose that $c$ is an $r$-edge colouring of a graph $G=(V,E)$ of type $\halff\kappa$ with $\halff \kappa$-decomposition $(A,B)$. If $i<r$ then either

\begin{enumerate}[(a)]
 \item $A$ satisfies condition (3) of Lemma \ref{3line} in colour $i$, or
\item there is $\tilde A\in [A]^{<\kappa}$ so that $A\setm \tilde A$ is covered by a graph $H$ of type $\halff \kappa$ with main class $A\setm \tilde A$ so that $i\notin \ran(c\uhp E(H))$.
\end{enumerate}
\end{lemma}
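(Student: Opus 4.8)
The plan is to split on whether colour $i$ is \emph{avoidable} beyond some small set (yielding alternative (b)) or \emph{unavoidable} (yielding condition (3)). Fix the enumerations $A=\{a_\xi:\xi<\kappa\}$ and $B=\{b_\xi:\xi<\kappa\}$ coming from the $\halff\kappa$-decomposition, so that $a_\xi$ is joined to exactly $\{b_\zeta:\zeta\geq\xi\}$, and write $G_{\neq i}=(V,E\setm c^{-1}(i))$ for the subgraph of $G$ keeping only the edges of colour $\neq i$. For a set $S\in[A]^{<\kappa}$ whose indices are bounded below $\kappa$ I will call $S$ \emph{$i$-thin} if its common neighbourhood in $G_{\neq i}$ still has size $\kappa$, i.e.\ if $\kappa$-many $b_\zeta$ lying above $S$ send no colour-$i$ edge into $S$, and \emph{$i$-thick} otherwise; thickness is clearly monotone under passing to supersets. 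If there is some $\tilde A\in[A]^{<\kappa}$ such that every bounded $S\in[A\setm\tilde A]^{<\kappa}$ is $i$-thin, then I am done with (b): filtering $A\setm\tilde A$ by bounded initial segments of indices gives an increasing union of $<\kappa$-sized sets whose common $G_{\neq i}$-neighbourhoods all have size $\kappa$, so Observation \ref{Hbuild} applied inside $G_{\neq i}$ produces a subgraph $H$ of type $\halff\kappa$ with main class $A\setm\tilde A$ carrying no edge of colour $i$.

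Otherwise, for every $\tilde A\in[A]^{<\kappa}$ there is a bounded $i$-thick $S\subseteq A\setm\tilde A$, and I aim to verify condition (3). Fix a nice $\kappa$-chain $(M_\alpha)_{\alpha<\cf(\kappa)}$ for $\{A,G,c\}$ covering $V$ with the enumerations in $M_1$, and let $A_\alpha=A\cap M_\alpha$. The first task is to reflect thickness into the gaps: exploiting the closure of the nice chain exactly as in Observation \ref{elem} (a thick witness of size $\leq\kappa_{\alpha+1}$ reflected into $M_{\alpha+1}$ is actually a \emph{subset} of $M_{\alpha+1}$), I obtain that for each $\alpha$ the gap $A\cap(M_{\alpha+1}\setm M_\alpha)$ contains an $i$-thick set and hence, by monotonicity, is itself $i$-thick. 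Thus for every $\alpha$ all but $<\kappa$-many of the $b_\zeta$ lying above the gap carry a colour-$i$ edge into $A\cap(M_{\alpha+1}\setm M_\alpha)$.

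The delicate point is upgrading ``one colour-$i$ neighbour per gap'' to ``infinitely many'', which is what condition (3) demands. Here I would split each gap into infinitely many pairwise disjoint bounded $i$-thick blocks $S_0,S_1,\dots$ placed in increasing index intervals (available because thickness persists after deleting $<\kappa$ points). For a fixed block the set of $b_\zeta$ above it failing to send a colour-$i$ edge into it has size $<\kappa$; provided $\cf(\kappa)>\omega$, the union of these countably many small sets is still $<\kappa$, so $\kappa$-many $b_\zeta$ meet \emph{every} block $S_n$ in colour $i$ and therefore have infinitely many colour-$i$ neighbours inside that single gap. Intersecting the (fewer than $\kappa$ many) conditions coming from the single-step gaps below $M_\beta$, for each $\beta<\cf(\kappa)$ I can then select $y_\beta=b_\eta$ with $\eta\geq\sup(M_\beta\cap\kappa)$ (so $y_\beta\notin M_\beta$) having infinitely many colour-$i$ neighbours in $A\cap(M_\beta\setm M_\alpha)$ for every $\alpha<\beta$, and using thickness of the gap just above $M_\beta$ I also secure a colour-$i$ neighbour $x_\beta\in A\setm M_\beta$. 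This is precisely condition (3) of Lemma \ref{3line} in colour $i$, so alternative (a) holds.

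The main obstacle is exactly this quantitative upgrade from colour-$i$ degree $\geq 1$ to degree $\geq\omega$ in a single gap: a merely $i$-thick gap only guarantees a colour-$i$-dominating family of $b$'s, and a priori every such $b$ could have finite colour-$i$ degree (with the mass carried by a few heavy $a$'s), so one must genuinely exploit the \emph{robustness} of thickness (its survival after removing $<\kappa$ vertices) in order to peel off infinitely many disjoint thick blocks. The attendant case $\cf(\kappa)=\omega$, where a countable union of $<\kappa$-sets can fill up $\kappa$, will require a separate argument (for instance, carrying out the block construction inside a single model $M_{\alpha+1}$ of uncountable-cofinality size), as will the routine but fiddly bookkeeping for successor levels $\beta=\gamma+1$ and for singular $\kappa$ when translating the index filtration into the hypotheses of Observation \ref{Hbuild}.
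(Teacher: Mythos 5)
Your thin/thick dichotomy is a genuinely different route from the paper's (which argues by contraposition: assuming condition (3) fails on a stationary set, it applies Fodor's lemma to uniformize the level $\alpha(\beta)$ below which the colour-$i$ traces concentrate, and then feeds the resulting large "$i$-free" common neighbourhoods into Observation \ref{Hbuild}). Your Case 1 is correct and is essentially the same use of Observation \ref{Hbuild} in $G_{\neq i}$ that the paper makes. The block-peeling idea in Case 2 is sound in principle — if pairwise disjoint $i$-thick blocks $S_0,S_1,\dots$ all sit inside one gap, then any $b$ above them lying outside the union of the $<\kappa$-sized failure sets does have infinitely many colour-$i$ neighbours in that gap — and you correctly identify the degree-$1$-to-degree-$\omega$ upgrade as the crux. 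But two steps are genuinely incomplete.

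First, the case $\cf(\kappa)=\omega$ is not a routine appendix: your central counting step ("the union of these countably many small sets is still $<\kappa$") is exactly what fails there, since countably many sets of size $<\kappa$ can exhaust $B$. The suggested repair (run the blocks inside one $M_{n+1}$) does not obviously help, because thickness is a statement about $\kappa$-sized neighbourhoods and there is no reason the failure sets reflect to sets of size $<|M_{n+1}|$. The paper needs a genuinely different device here (Claim \ref{ctblesingclaim}): it takes $Y_n\subseteq N(x_n,i)$ of size $|M_n|^+$ and shrinks it so that all $y\in Y_n$ share one \emph{finite} trace $a_n=N(y,i)\cap A\cap M_n\setm M_\ell$, a $|M_n|^+$ versus $|M_n|$ count that has no analogue in your argument. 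Second, placing infinitely many pairwise disjoint thick blocks \emph{inside a single gap} $A\cap(M_{\alpha+1}\setm M_\alpha)$ rests on the reflected witness $S_n\in M_{\alpha+1}$ actually being a subset of $M_{\alpha+1}$, which needs $|S_n|\leq|M_{\alpha+1}|$. This is automatic when $\kappa=\lambda^+$ (all models have size $\lambda$), but for limit $\kappa$ the minimal size of a thick set disjoint from the previously removed material can grow, and nothing you say bounds it by $\kappa_{\alpha+1}$; so the blocks need not land in the gap. Both points need a real argument (or a switch to something like the paper's pressing-down/trace-fixing machinery) before Case 2 yields condition (3).
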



We need the following claims:

\begin{lclaim}\label{largecofclaim}
Suppose that $\kappa\geq \cf(\kappa)=\mu>\omega$, $c$ is an $r$-edge colouring of a graph $G=(V,E)$ of type $\halff\kappa$ with $\halff \kappa$-decomposition $(A,B)$. Suppose that $\{M_\alpha:\alpha<\mu\}$ is a nice $\kappa$-chain of elementary submodels covering $V$ with $G,A,B,c\in M_1$. If $i< r$ then either
\begin{enumerate}[(a)]
 \item there is a club $ C \subseteq \mu$ so that for every $\beta \in C$ there is $x_\beta\in A\setm M_\beta,y_\beta\in B\setm M_\beta$ such that $c(x_\beta,y_\beta)=i$ and $$|N(y_\beta,i)\cap A\cap M_\beta\setm M_{\alpha}|\geq \omega$$ for all $\alpha<\beta$, or
\item there is $\tilde A\in [A]^{<\kappa}$ so that $A\setm \tilde A$ is covered by a graph $H$ of type $\halff \kappa$ with main class $A\setm \tilde A$ so that $i\notin \ran(c\uhp E(H))$.
\end{enumerate}
\end{lclaim}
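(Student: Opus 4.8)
The plan is to run a dichotomy on the \emph{colour-$i$-avoiding} common neighbourhoods and to feed the two outcomes into the two clauses. Write $G'=(V,E\setm c^{-1}(i))$ for the graph obtained by deleting the colour-$i$ edges, so that for $F\subseteq A$ one has $N_{G'}[F]=\bigcap_{a\in F}\bigl(N_G(a)\setm N_G(a,i)\bigr)$. First I would reformulate clause (b): using the enumeration $A=\{a_\xi:\xi<\kappa\}$ from the $\halff\kappa$-decomposition, Observation \ref{Hbuild} applied inside $G'$ shows that (b) \emph{holds} as soon as there is some $\rho<\kappa$ for which the sets $N_{G'}[\{a_\xi:\rho\le\xi<\delta\}]$ have size $\kappa$ for cofinally many $\delta<\kappa$; indeed, taking $\tilde A=\{a_\xi:\xi<\rho\}$ and a cofinal increasing sequence of such $\delta$ of length $\cf(\kappa)$, Observation \ref{Hbuild} builds a type-$\halff\kappa$ subgraph of $G'$ with main class $A\setm\tilde A$, and this graph avoids colour $i$ by construction. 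Hence I may assume clause (b) fails, which means: for \emph{every} $\rho<\kappa$ there is $\delta^*(\rho)<\kappa$ so that for all $\delta\ge\delta^*(\rho)$ all but $<\kappa$ of the vertices $b$ adjacent to every $a_\xi$ with $\rho\le\xi<\delta$ carry a colour-$i$ edge to $\{a_\xi:\rho\le\xi<\delta\}$. Call this condition $\mathrm{bad}(\rho,\delta)$.

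From eventual badness I would build, by recursion on $s<\cf(\kappa)$, a continuous increasing sequence $(\rho_s)_{s<\cf(\kappa)}$ cofinal in $\kappa$ with $\rho_{s+1}\ge\delta^*(\rho_s)$, so that $\mathrm{bad}(\rho_s,\rho_{s+1})$ holds for every $s$. The core step is a diagonal intersection. Fix a limit $\beta<\cf(\kappa)$ and put $\zeta^\dagger=\rho_\beta=\sup_{s<\beta}\rho_s$. For each $s<\beta$ the set of exceptional vertices at the slice $[\rho_s,\rho_{s+1})$ — those $b_\zeta$ with $\zeta\ge\rho_{s+1}$ carrying \emph{no} colour-$i$ edge into $\{a_\xi:\rho_s\le\xi<\rho_{s+1}\}$ — has size $<\kappa$ by $\mathrm{bad}(\rho_s,\rho_{s+1})$. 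Since $\cf(\beta)\le|\beta|<\cf(\kappa)$, the union of these $<\kappa$-sized exceptional sets over $s<\beta$ still has size $<\kappa$, whereas $\{b_\zeta:\zeta\ge\zeta^\dagger\}$ has size $\kappa$; so I can pick a single vertex $y$ of index $\ge\zeta^\dagger$ that carries a colour-$i$ edge into every slice $[\rho_s,\rho_{s+1})$, $s<\beta$. Its colour-$i$ neighbours in $A$ are then cofinal below $\zeta^\dagger$ and meet infinitely many slices above any prescribed level, and one extra application of $\mathrm{bad}(\rho_\beta,\rho')$ for some $\rho'>\zeta^\dagger$ lets me also demand of $y$ a colour-$i$ edge to some $x=a_\xi$ with $\zeta^\dagger\le\xi<\rho'$, i.e.\ an upward colour-$i$ edge.

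The last task is to phrase this as clause (a), i.e.\ relative to the given chain $(M_\alpha)_{\alpha<\cf(\kappa)}$. For regular $\kappa$ this is clean: there condition (5) of a nice chain gives $M_\alpha\cap\kappa=\gamma_\alpha\in\kappa$, so $A\cap M_\alpha=\{a_\xi:\xi<\gamma_\alpha\}$ is an initial segment, the model-slice $A\cap(M_\beta\setm M_\alpha)$ is exactly the interval $[\gamma_\alpha,\gamma_\beta)$, and by running the recursion so that $(\rho_s)$ interleaves with $(\gamma_s)$ I can arrange $\zeta^\dagger=\gamma_\beta$; then $y\in B\setm M_\beta$, $x\in A\setm M_\beta$, $c(x,y)=i$, and $y$ has infinitely many colour-$i$ neighbours in each slice $A\cap M_\beta\setm M_\alpha$, so $\beta$ is a witness. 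The set of such $\beta$ contains the club of closure points of the recursion, giving clause (a). \textbf{The main obstacle} is exactly this translation into the model language together with securing the downward-cofinal neighbours and the single upward edge for one and the same vertex $y$; and making it uniform across singular $\kappa$, where $M_\beta\cap\kappa$ need no longer be an ordinal (it may be cofinal in $\kappa$) and one must replace initial segments by the scale $(\kappa_\alpha)_{\alpha<\cf(\kappa)}$ of the nice chain, using $\kappa_{\alpha+1}\subseteq M_{\alpha+1}$ to keep the slices inside the appropriate model differences. The cardinal inequality $\cf(\beta)<\cf(\kappa)$ driving the diagonal intersection is what makes the whole scheme possible.
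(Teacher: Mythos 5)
Your overall strategy is a legitimate dual of the paper's: the paper negates clause (a), extracts a stationary set of failures, and uses Fodor's pressing-down lemma (plus, for singular $\kappa$, a uniformization of the finite sets $N(y,i)\cap A\cap M_\beta\setm M_{\alpha(\beta)}$ over a large set $Y_\beta$ of witnesses) to manufacture the colour-$i$-free graph of clause (b); you instead negate clause (b) and construct the witnesses for (a) directly by a diagonal intersection over the ``bad'' levels. For \emph{regular} $\kappa$ your argument is correct: there $M_\alpha\cap\kappa=\gamma_\alpha$ is an ordinal, $A\cap M_\beta\setm M_\alpha$ really is the interval $[\gamma_\alpha,\gamma_\beta)$ of the enumeration, interleaving $(\rho_s)$ with $(\gamma_s)$ gives a club of agreement points, and one colour-$i$ neighbour per slice $[\rho_s,\rho_{s+1})$, $s<\beta$, yields infinitely many colour-$i$ neighbours in every tail $A\cap M_\beta\setm M_\alpha$ because $\beta$ is a limit. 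The cardinality count for the diagonal intersection ($|\beta|<\cf(\kappa)$ many exceptional sets, each of size $<\kappa$) is also fine for both cofinality cases.

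The genuine gap is in the singular case, and your proposed fix (``keep the slices inside the appropriate model differences'' via $\kappa_{\alpha+1}\subseteq M_{\alpha+1}$) does not close it. When $\kappa$ is singular, $M_\alpha\cap\kappa$ is \emph{not} an initial segment: it contains the ordinal $\kappa_\alpha$ as a subset but also up to $\kappa_\alpha$ further ordinals scattered cofinally in $\kappa$. So while $[\kappa_s,\kappa_{s+1})\subseteq M_{s+1}$, the interval is in general not disjoint from $M_\alpha$ for $\alpha\leq s$. Your construction hands you exactly \emph{one} colour-$i$ neighbour $a_{\xi_s}$ of $y$ per slice, i.e.\ $|\beta|<\cf(\kappa)\leq\kappa_\alpha$ many neighbours in total below $\zeta^\dagger$, and nothing prevents all of them from lying inside a single small model $M_\alpha$; in that case $N(y,i)\cap A\cap M_\beta\setm M_\alpha$ as witnessed by your construction is empty, and clause (a) fails for that $\beta$. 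To repair this you must choose the neighbour in the $s$-th slice \emph{outside $M_s$}, which forces you to apply the failure of (b) not to the intervals $\{a_\xi:\rho_s\leq\xi<\delta\}$ but to the coverings $\{a_\xi:\rho_s\leq\xi<\delta\}\setm M_s$ (legitimate, since Observation \ref{Hbuild} accepts any increasing $<\kappa$-sized covering, but a different family for each $s$). This extra uniformization over the non-initial traces $M_s\cap\kappa$ is precisely the point where the paper's own proof has to do additional work in its Case 2, so it cannot be dismissed as routine.
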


\begin{proof}
 Suppose that (a) fails i.e. there is a stationary set $S\subs \mu$ so that for all $\beta\in S$ and $x\in A\setm M_\beta,y\in B\setm M_\beta$ with $c(x,y)=i$ we have $$|N(y,i)\cap A\cap M_\beta\setm M_{\alpha}|<\omega$$ for some $\alpha<\beta$.

 Let $G_{\neq i}=(V,c^{-1}(r\setm \{i\}))$. Note that 
\begin{clobs}\label{obss}
If there is an $\alpha\in S$ and $\lambda<\kappa$ so that $$|B\cap N(x,i)| \leq \lambda$$ for every $x\in A\setm M_\alpha$ then (b) holds with $\tilde A=A\cap M_\alpha$.
 \end{clobs}

Indeed, we can apply Observation \ref{Hbuild} to $A\setm \tilde A$ in the graph $G_{\neq i}$ for the initial segments of the $\halff\kappa$ ordering.

Otherwise, we distinguish two cases:

\textbf{Case 1:} $\kappa$ is regular. Recall that we have $M_\alpha\cap \kappa\in \kappa$ and hence $x\in A\cap M_\alpha,y\in B\setm M_\alpha$ implies  $\{x,y\}\in E$.

Select $x_\beta \in A\setm M_\beta$ and $y_\beta\in B\setm M_\beta$ with $c(x_\beta,y_\beta)=i$; this can be done by Observation \ref{obss}.  Hence $$|N(y_\beta,i)\cap A\cap M_\beta\setm M_{\alpha}|< \omega$$ for some $\alpha<\beta$. That is, there is $\alpha(\beta)<\beta$ so that 
$$N(y_\beta,i)\cap A\cap M_\beta\subs M_{\alpha(\beta)}$$ for all $\beta\in S\cap \lim(\mu)$ (where $\lim(\mu)$ denotes the set of limit ordinals in $\mu$).

Apply Fodor's pressing down lemma to the regressive function $\beta\mapsto \alpha(\beta)$ on the stationary set $S\cap \lim(\kappa)$ and find stationary $T\subseteq S\cap \lim(\kappa)$ and $\tilde \alpha\in \kappa$ so that $\alpha(\beta)=\tilde \alpha$ for all $\beta\in T$. It is easy to see that (b) is satisfied with $\tilde A=A\cap M_{\tilde \alpha}$. Indeed, if $x\in A_\alpha=A\cap M_\alpha\setm \tilde A$ and $\beta\in T\setm \alpha$ then $\{x,y_\beta\}\in E$ and $c(x,y_\beta)\neq i$ (for any $\alpha\in \kappa\setm \tilde \alpha$). In turn $$|N_{G_{\neq i}}[A_\alpha]|\geq |\{y_\beta:\beta\in T\setm \alpha\}|\geq \kappa.$$ Hence we can apply Observation \ref{Hbuild} to $A\setm \tilde A$ in $G_{\neq i}$.

\textbf{Case 2:} $\kappa$ is singular. Recall that $\kappa_\alpha=|M_\alpha|$ is a strictly increasing cofinal sequence of cardinals in $\kappa\setm \cf(\kappa)$. Select $x_\beta \in A\setm M_\beta$  and find $Y_\beta\in [B\setm M_\beta]^{\kappa_\beta^+}$ so that $Y_\beta\subseteq N(x_\beta,i)$ for each $\beta\in S$; this can be done by Observation \ref{obss}. We can suppose, by shrinking $Y_\beta$, that there is a finite set $F_\beta$ and $\alpha(\beta)<\beta$ with $$F_\beta=N(y,i)\cap A\cap M_\beta\setm M_{\alpha(\beta)}$$ for all $y\in Y_\beta$. The importance here is that $\alpha(\beta)$ and $N(y,i)\cap A\cap M_\beta\setm M_{\alpha(\beta)}$ does not depend on $y\in Y_\beta$ which can be done as there are only $|\beta|$ choices for $\alpha(\beta)$ and $\kappa_\beta$ choices for $F_\beta$ while $\kappa_\beta^+$ choices for $y\in Y_\beta$. 

Apply Fodor's pressing down lemma to the regressive function $\beta\mapsto \alpha(\beta)$ and find a stationary $T\subseteq S$ and $\tilde \alpha\in \cf(\kappa)$ so that $\alpha(\beta)=\tilde \alpha$ for all $\beta\in T$. Let $\tilde A=(A\cap M_{\tilde \alpha})\cup \bigcup\{F_\beta:\beta\in T\}$ and note that $|\tilde A|<\kappa$. 

As before, if $x\in A_\alpha=A\cap M_\alpha\setm \tilde A$ and $\beta\in T\setm \alpha$ then $\{x,y\}\in E$ and $c(x,y)\neq i$ for any  $y\in Y_\beta$ and $\alpha\in \kappa\setm \tilde \alpha$. In turn $$|N_{G_{\neq i}}[A_\alpha]|\geq |\bigcup\{Y_\beta:\beta\in T\setm \alpha\}|\geq \kappa.$$ Hence we can apply Observation \ref{Hbuild} to $A\setm \tilde A$ in $G_{\neq i}$.


\end{proof}

\begin{lclaim}\label{ctblesingclaim}
 Suppose that $\kappa>\omega=\cf(\kappa)$ and $c$ is an $r$-edge colouring of a graph $\gr$ of type $\halff\kappa$ with $\halff \kappa$-decomposition $(A,B)$. Suppose that $\{M_n:n\in\omega\}$ is a nice $\kappa$-chain of elementary submodels covering $V$ with $G,A,B,c\in M_1$. If $i<r$ then either
\begin{enumerate}[(a)]
 \item there is an increasing sequence $\{n_k:k\in\omega\}\subseteq \omega$ with $n_0=0$ such that for all $k<\omega$ there is $x_k\in A\setm M_{n_{k+1}},y_k\in B\setm M_{n_{k+1}}$ with $c(x_k,y_k)=i$ and $$|N(y_k,i)\cap A\cap M_{n_{k+1}}\setm M_{n_k}|\geq \omega,$$ or  
\item there is $\tilde A\in [A]^{<\kappa}$ so that $A\setm \tilde A$ is covered by a graph $H$ of type $\halff \kappa$ with main class $A\setm \tilde A$ so that $i\notin \ran(c\uhp E(H))$.
\end{enumerate}
\end{lclaim}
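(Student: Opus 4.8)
The plan is to imitate the structure of the proof of Claim \ref{largecofclaim}, but since $\cf(\kappa)=\omega$ forbids any appeal to Fodor's pressing down lemma, I would replace the stabilisation of the lower cut-off by a greedy extraction followed by a level-by-level pigeonhole. Throughout write $G_{\neq i}=(V,c^{-1}(r\setm\{i\}))$; I assume (a) fails and aim at (b). First I would observe that (a) holds as soon as no index is a \emph{dead end}, where $m\in\omega$ is a dead end if for every $n>m$ there are no $x\in A\setm M_n,\ y\in B\setm M_n$ with $c(x,y)=i$ and $|N(y,i)\cap A\cap M_n\setm M_m|\geq\omega$; indeed, if no $m$ is a dead end one builds the sequence of (a) greedily starting from $n_0=0$. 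Hence the failure of (a) produces a single dead end $m^*$, and the point is that this $m^*$ supplies a \emph{uniform} lower model $M_{m^*}$ below which nothing reflects, which is exactly the role played by the single value $\tilde\alpha$ obtained from Fodor in the proof of Claim \ref{largecofclaim}.

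Next I would run the dichotomy underlying Observation \ref{obss}. If there are $n^*$ and $\lambda<\kappa$ with $|N(x,i)\cap B|\leq\lambda$ for all $x\in A\setm M_{n^*}$, then (b) follows at once with $\tilde A=A\cap M_{n^*}$: taking the initial segments $A_n=\{a_\xi\in A\setm\tilde A:\xi<\kappa_n\}$, all but $<\kappa$ of the $b_\zeta$ with $\zeta\geq\kappa_n$ avoid colour $i$ on every edge into $A_n$, so $|N_{G_{\neq i}}[A_n]|=\kappa$ and Observation \ref{Hbuild} produces the required graph $H$ of type $\halff\kappa$ inside $G_{\neq i}$. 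I may therefore assume the complementary case: for every $n$ there is $x_n\in A\setm M_n$ with $|N(x_n,i)\cap B|>\kappa_n$, and hence a set $Y_n\subseteq N(x_n,i)\cap(B\setm M_n)$ of size $\kappa_n^+$.

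Now the key step. For $n>m^*$ and $y\in Y_n$, the dead end property applied to the pair $x_n,y$ forces $F_y:=N(y,i)\cap A\cap M_n\setm M_{m^*}$ to be \emph{finite}. Since $A\cap M_n\setm M_{m^*}$ has only $\kappa_n$ finite subsets while $|Y_n|=\kappa_n^+$, a pigeonhole — this is what stands in for Fodor — yields $Y_n'\subseteq Y_n$ of size $\kappa_n^+$ on which $F_y$ is a constant finite set $F_n$. I would then set $\tilde A=(A\cap M_{m^*})\cup\bigcup_{n>m^*}F_n$, which has size $<\kappa$. The decisive difficulty, and the place where singularity of cofinality $\omega$ must be handled with care, is that $M_n\cap\kappa$ is \emph{not} an initial segment, so a single $b_\zeta$ cannot be expected to be adjacent to all of $A\cap M_n$. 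This is circumvented using niceness: $\kappa_n\subseteq M_n$, so the initial segments $A_n=\{a_\xi\in A\setm\tilde A:\xi<\kappa_n\}$ lie inside $M_n$, and for any $n'\geq n$ every $y=b_\zeta\in Y_{n'}'$ satisfies $\zeta\geq\kappa_{n'}\geq\kappa_n>\xi$ for each $a_\xi\in A_n$ (so the edge $\{a_\xi,b_\zeta\}$ is present), while $a_\xi\notin F_{n'}$ forces $c(a_\xi,b_\zeta)\neq i$. Thus $\bigcup_{n'\geq n}Y_{n'}'\subseteq N_{G_{\neq i}}[A_n]$, giving $|N_{G_{\neq i}}[A_n]|=\kappa$ for every $n$, and Observation \ref{Hbuild} applied in $G_{\neq i}$ to the increasing union $A\setm\tilde A=\bigcup_nA_n$ delivers the graph $H$ of type $\halff\kappa$ with main class $A\setm\tilde A$ omitting colour $i$, which is precisely (b).

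The main obstacle is twofold, and both difficulties stem from $\cf(\kappa)=\omega$: the absence of Fodor, which I handle by extracting the single dead end $m^*$ and by the level-wise pigeonhole stabilising $F_y$ to $F_n$; and the fact that the models meet $\kappa$ cofinally rather than in an initial segment, which I handle by indexing the segments $A_n$ by the ordinals below $\kappa_n\subseteq M_n$ so that the common neighbours really do form edges into $A_n$.
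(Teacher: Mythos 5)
Your proposal is correct and follows essentially the same route as the paper: negating (a) yields a single uniform level (your $m^*$, the paper's $\ell$), then the same dichotomy on colour-$i$ degrees, the same pigeonhole stabilisation of the finite sets $F_y$ over $Y_n$ of size $\kappa_n^+$, and the same application of Observation \ref{Hbuild} in $G_{\neq i}$. Your treatment is in fact slightly more explicit than the paper's at two points it leaves implicit — the greedy "dead end" extraction of $m^*$ and the use of $\kappa_n\subseteq M_n$ to index the increasing union by initial segments $\{a_\xi:\xi<\kappa_n\}$ so that the adjacency in the $\halff\kappa$ ordering is guaranteed.
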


\begin{proof}
 Suppose that (a) fails; hence there is an $\ell\in \omega$ such that for every $x\in A\setm M_n, y\in B\setm M_n$ with $c(x,y)=i$ we have $$|N(y,i)\cap A\cap M_n\setm M_l|<\omega$$ for all $n\in\omega\setm \ell$.
\begin{clobs}
 If there is $n\in\omega$ and $\lambda<\kappa$ so that $N(x,i)\leq\lambda$ for all $x\in A\setm M_n$ then (b) holds with $\tilde A=A\cap M_n$.
\end{clobs}

Indeed, we can apply Observation \ref{Hbuild} to $A\setm \tilde A$ in the graph $G_{\neq i}=(V,c^{-1}(r\setm \{i\}))$.

Otherwise, we can select $x\in A\setm M_n$ and $Y_n\in [M_{n+1}\setm M_\ell]^{|M_n|^+}$ with $Y_n\subs N(x_n,i)$ for all $n\in\omega\setm \ell$. We can suppose, by shrinking $Y_n$, that there is a finite $a_n\subs A\cap M_n$ so that $$N(y,i)\cap A\cap M_n\setm M_\ell=a_n$$ for all $y\in Y_n$. Let $\tilde A=( A\cap M_\ell)\cup \bigcup\{a_n:n\in\omega\setm \ell\}$. As before, in Case 2 of the proof of Claim \ref{largecofclaim}, applying Observation \ref{Hbuild} to $A\setm \tilde A$ in $G_{\neq i}$ finishes the proof.

\end{proof}

Hence, we arrived at the
 
\begin{proof}[Proof of Lemma \ref{ml1}] Assume that (b) fails in Lemma \ref{ml1}. Hence condition (b) of Claim \ref{ctblesingclaim} (if $\cf(\kappa)=\omega$) or \ref{largecofclaim} (if $\cf(\kappa)>\omega$)  fails for colour $i$. In turn, we have a nice sequence of elementary submodels satisfying condition (3) of Lemma \ref{3line} in colour $i$ by condition (a) of Claim \ref{ctblesingclaim} or \ref{largecofclaim} (respectively).

\end{proof}

\subsection{The second main step} \label{ml2sec}

Now, we would like to determine if, in an edge coloured graph of type $\halff \kappa$, a $\sat \kappa$ subset satisfies $\utakk \kappa$ in some colour.

\begin{lemma}\label{ml2}Let $\kappa$ be an infinite cardinal. Suppose that $c$ is an $r$-edge colouring of a graph $G=(V,E)$ of type $\halff\kappa$ with $\halff \kappa$-decomposition $(A,B)$. Let $I\in [r]^{<r}$, $X\in [A]^\kappa$ and suppose that  $X$ is $\sat \kappa$ in all colours $i\in I$. If $(\ih)_{\lambda}$ holds for $\lambda<\kappa$ then either

\begin{enumerate}[(a)]
 \item there is an $i\in I$ such that $X$ satisfies  $\utak \kappa i$, or
\item there is $ \tilde X\in [X]^{<\kappa}$ and a partition $\{X_j:j\in r\setm I\}$ of $X\setm \tilde X$ such that $$|N(x,j)\cap N(x',j)\cap B|=\kappa$$ for all $x,x'\in X_j$ and $j\in r\setm I$.
\end{enumerate}
 In particular,  the sets $X_j$ given by condition (b) are $\sat \kappa$ in colour $j$ in $X_j$.

Moreover, if $B\subs X$ then there is $j\in I\setm r$ so that $X_j$ is $\kappa$-connected in colour $j$.
\end{lemma}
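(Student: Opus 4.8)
The plan is to work entirely inside alternative (b): we are handed $\tilde X\in[X]^{<\kappa}$, the partition $\{X_j:j\in r\setm I\}$ of $X\setm\tilde X$, and the extra hypothesis $B\subseteq X$. I would first dispatch the preceding ``in particular'' sentence, since it clarifies what is really at stake: each $X_j$ is $\sat\kappa$ in colour $j$ \emph{in the ambient graph} $G$ directly from (b), because for $x\neq x'\in X_j$ any $\kappa$-sized subset of $N(x,j)\cap N(x',j)\cap B$ gives $\kappa$ pairwise internally disjoint colour-$j$ paths $x,z,x'$. The genuine content of the \textbf{Moreover} is therefore to keep these connecting vertices $z$ \emph{inside} $X_{j^*}$ for one well-chosen $j^*$, so that the witnessing paths lie in the induced subgraph $G\uhp X_{j^*}$ and hence witness $\kappa$-connectivity rather than mere unseparability in $G$.

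The first concrete step is a pigeonhole. Since $B\subseteq X$, the set $B\setm\tilde X$ is partitioned by $\{X_j:j\in r\setm I\}$; as $|B|=\kappa$, $|\tilde X|<\kappa$ and $r\setm I$ is finite, there is $j^*\in r\setm I$ with $|B\cap X_{j^*}|=\kappa$. I would take this $j^*$ (or, if necessary, the colour maximising $|B\cap X_j|$) as the candidate and aim to show that $X_{j^*}$ is $\kappa$-connected in colour $j^*$. Using the separator reformulation of $\sat\kappa$, it suffices to prove that for all $x\neq x'\in X_{j^*}$
\[
|N(x,j^*)\cap N(x',j^*)\cap B\cap X_{j^*}|=\kappa .
\]
Here the restriction to $B$ is automatic: since $X_{j^*}\subseteq A$ and every edge of $G$ joins a vertex of $A$ to a vertex of $B$, any common neighbour lying in $X_{j^*}$ already lies in $B\cap X_{j^*}$, and $\kappa$ such common neighbours yield $\kappa$ internally disjoint colour-$j^*$ paths inside $X_{j^*}$, as required.

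Everything thus reduces to a \emph{localisation} statement, and this is exactly the step I expect to be the main obstacle: property (b) on its own only guarantees $\kappa$ common neighbours somewhere in $B$, and a priori they could all fall in $\bigcup_{j'\neq j^*}X_{j'}$. Property (b) alone does not seem to rule this out, so I would not argue from it in isolation but instead re-open the construction of the partition inside the main proof of Lemma \ref{ml2}. The classes $X_j$ are extracted by repeatedly failing alternative (a) in the colours of $I$, and when $B\subseteq X$ this extraction happens \emph{inside} $X$ itself; I would track, alongside the partition, the stronger invariant that the cohesion witnesses for a pair of $X_j$ can be chosen in $B\cap X_j$. With that invariant in hand one feeds Observation \ref{Hbuild}, applied in colour $j^*$ to an increasing exhaustion of $X_{j^*}$, to build a type $\halff\kappa$ subgraph of colour $j^*$ whose main class sits inside $X_{j^*}$; combined with $(\ih)_\lambda$ for $\lambda<\kappa$ this upgrades the global unseparability to genuine $\kappa$-connectivity of $X_{j^*}$. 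An alternative, cleaner route I would try first is purely combinatorial: show that a pair's common colour-$j^*$ neighbours cannot concentrate (up to $<\kappa$ many) in a single foreign class $X_{j'}$ — e.g.\ by deriving from such a concentration that $x$ itself satisfies the defining cohesion of $X_{j'}$ and hence would have been placed there, contradicting the partition. Either way, once localisation is secured the length-two paths complete the argument.
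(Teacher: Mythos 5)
There is a genuine gap, in fact two. First, your proposal never touches the actual content of the lemma, namely the dichotomy: you ``work entirely inside alternative (b)'' as if the partition were handed to you, but the lemma's substance is the implication that if (a) fails then (b) holds. In the paper this is where all the work is: assuming $\utak \kappa i$ fails for every $i\in I$, one extracts a single $\lambda<\kappa$ and a small exceptional set, builds a type $\halff\kappa$ subgraph with main class $X$ minus that set (Observation \ref{Hbuild}), and then uses $(\ih)_{\lambda,|I|}$ via Claim \ref{clmA} to rule out the existence of a $\lambda$-configuration in colours $I$; Claim \ref{clmD} converts this non-existence into the statement that every finite $a\subseteq X\setm\tilde X$ has $\kappa$ many vertices of $B$ joined to $a$ only in colours outside $I$, which is exactly the hypothesis of Claim \ref{clmC}. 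None of this appears in your proposal, and the hypothesis ``$(\ih)_\lambda$ holds for $\lambda<\kappa$'' is never used.

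Second, for the \textbf{Moreover} clause you correctly diagnose the localisation obstacle --- (b) alone only gives common neighbours somewhere in $B$, not in $X_{j^*}$ --- but you do not close it, and your proposed selection of $j^*$ by pigeonhole (any $j$ with $|B\cap X_j|=\kappa$, or the maximiser) is not the right criterion. The paper's resolution is that the partition in Claim \ref{clmC} is \emph{defined} from a uniform ultrafilter $U$ on $B$ containing all sets $B\setm\bigcup\{N(x,i):x\in a,\,i\in I\}$, via $X_j=\{x\in X: N(x,j)\in U\}$. When $B\subseteq X$ one chooses the unique $j$ with $X_j\cap B\in U$ (a strictly stronger property than $|X_j\cap B|=\kappa$, and one that your maximiser need not satisfy); then for $x,x'\in X_j$ the set $N(x,j)\cap N(x',j)\cap X_j\cap B$ is an intersection of three members of $U$, hence of size $\kappa$ by uniformity, and the length-two paths stay inside $X_j$. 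Your ``stronger invariant'' to be tracked through the construction is precisely $U$-membership, but since you never specify it, the localisation step remains unproved; the alternative route you sketch (showing $x$ ``would have been placed'' in $X_{j'}$) cannot be evaluated without a definition of how membership in $X_{j'}$ is decided.
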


The proof of Lemma \ref{ml2} (at the end of Section \ref{ml2sec}) will be achieved through a series of claims below. The main application of Lemma \ref{ml2} is in the proof of Theorem \ref{halff}.


\begin{ldefinition} Suppose that $\lambda$ is a cardinal, $\gr$ is graph with an $r$-edge colouring $c$. A \emph{$\lambda$-configuration in colours $I\subseteq r$} is a pairwise disjoint family $\mc X=\{a_\xi:\xi<\lambda\}\subs [V]^{<\omega}$ and points $\mc Y=\{y_\xi:\xi<\lambda\}$ such that $$y_\zeta\in \bigcup\{N(x,i):x\in a_\xi,i\in I\}$$ for all $\xi\leq \zeta <\lambda$.
\end{ldefinition}


\begin{lclaim} \label{clmA} Suppose that $\lambda$ is a cardinal, $\gr$ is graph with an $r$-edge colouring $c$. Let $\mc X, \mc Y$ be a $\lambda$-configuration in colours $I\subseteq r$. Suppose that for each $i\in I$ there is $Y_i\subseteq V$ so that $\bigcup \mc X$ is $\sat \lambda$ in colour $i$ inside $V_i=\bigcup \mc X\cup Y_i$.

 Then $(\ih)_{\lambda,|I|}$ implies that there is an $i\in I$ and a path $P$ in colour $i$ concentrated on $\bigcup \mc X$ which is inside $V_i$ and has order type $\lambda$.
\end{lclaim}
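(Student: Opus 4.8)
The plan is to encode the $\lambda$-configuration as an edge-coloured graph of type $\halff \lambda$, feed it to $(\ih)_{\lambda,|I|}$, and then pull the resulting monochromatic path back into $G$ using the $\sat \lambda$ hypothesis. First I would build an auxiliary graph $H$ on the vertex set $\mc X \cup \mc Y$, declaring $\{a_\xi, y_\zeta\} \in E(H)$ exactly when $\xi \leq \zeta$; by the defining property of a $\lambda$-configuration this is precisely the edge relation making $H$ a graph of type $\halff \lambda$ with main class $\mc X = \{a_\xi : \xi < \lambda\}$. I would then define an $|I|$-edge colouring $c'$ of $H$ by letting $c'(a_\xi, y_\zeta)$ be some colour $i \in I$ witnessing $y_\zeta \in \bigcup\{N(x,i) : x \in a_\xi\}$; such a colour exists for every edge of $H$ by the configuration hypothesis.

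Next I would apply $(\ih)_{\lambda,|I|}$ to $(H, c')$: this yields a subset $Z$ of the main class of size $\lambda$ together with a colour $i \in I$ such that $Z$ satisfies all three conditions of Lemma \ref{3line} in colour $i$ inside $H$. Lemma \ref{3line} then produces a colour-$i$ path $P_H$ in $H$, of order type $\lambda$, covering and concentrated on $Z$. The key structural observation at this stage is that, since $H$ is bipartite and $Z$ lies in the main class, no main-class vertex can be a $\prec_{P_H}$-limit of $P_H$ (its only neighbours lie in the second class, so concentration on $Z$ would fail there); hence \emph{every} limit point of $P_H$ is one of the vertices $y_\zeta$.

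Finally I would realise $P_H$ as a genuine path in $G$. Traversing $P_H$ in its well-order, I replace each main-class vertex $a_\xi$ by an actual representative $x \in a_\xi \subseteq \bigcup \mc X$, chosen so that the incident $H$-edges of colour $i$ are realised by genuine colour-$i$ edges of $G$ to the neighbouring $y_\zeta$'s, and I splice consecutive representatives together by finite colour-$i$ paths supplied by the $\sat \lambda$ hypothesis inside $V_i$; at each of the $<\lambda$ steps only $<\lambda$ vertices have been used, so these bridges may be taken pairwise disjoint and disjoint from the material already built. Because the limit points of $P_H$ are exactly the $y_\zeta$'s, and each such $y_\zeta$ retains cofinally many colour-$i$ neighbours in $\bigcup \mc X$ below it, the resulting object $P$ is a colour-$i$ path of order type $\lambda$, concentrated on $\bigcup \mc X$, and contained in $V_i$.

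The main obstacle will be this last realisation step: one must verify that the spliced object is a genuine Rado path of order type exactly $\lambda$ and that concentration survives at every limit, which rests on the observation above that all limit points descend from the second-class vertices $y_\zeta$. A related subtlety is that these limit-witnessing vertices are the $y_\zeta$'s themselves, so one must ensure they lie in the ambient set; since $\sat \lambda$-ness of $\bigcup \mc X$ in colour $i$ is preserved when the ambient vertex set is enlarged, one may assume $\mc Y \subseteq V_i$ without affecting the hypothesis.
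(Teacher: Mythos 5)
Your reduction to an auxiliary $|I|$-coloured graph of type $\halff\lambda$ is exactly the paper's strategy, and your observation that every $\prec$-limit of the abstract path must lie among the $y_\zeta$'s is correct. The gap is in the realisation step, and it is precisely the point your last paragraph flags but does not resolve. You replace each finite block $a_\xi$ by a \emph{single} representative chosen to realise the two path-edges to the adjacent $y$'s. First, no single $x\in a_\xi$ need witness both incident colour-$i$ edges of $c'$: your $c'$ only records the existence of \emph{some} witness per edge, and the witnesses for the backward and forward edges may be different elements of $a_\xi$. More seriously, at a limit point $y_\zeta$ of the abstract path, concentration of the abstract path only guarantees that cofinally many blocks $a_{\xi(\mu)}$ below $y_\zeta$ contain \emph{some} vertex $x$ with $c(x,y_\zeta)=i$; these witnesses are in general not the representatives you selected (which were chosen to satisfy unrelated local conditions), so the realised object need not contain any colour-$i$ neighbour of $y_\zeta$ in $\bigcup\mc X$ cofinally below $y_\zeta$. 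This destroys both the path property at $y_\zeta$ and concentration on $\bigcup\mc X$, and no choice of a single representative from a finite block can meet the unboundedly many distinct constraints imposed by the limits above it.

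The paper's proof avoids this with two devices you are missing. First, a pigeonhole preprocessing (merging $|I|+1$ consecutive blocks into one and skipping to $y_{\xi+|I|+1}$) ensures that every edge of the auxiliary graph receives some colour with \emph{at least two} witnesses, and the auxiliary colouring $d$ is defined by this multiplicity-two condition. Second, each abstract main-class vertex $q_\nu=a_{\xi(\nu)}$ is realised by a finite colour-$i$ path $R_\nu$ containing \emph{all} of $a_{\xi(\nu)}$, entering at a vertex $v$ with $c(v,y_{\xi(\mu)})=i$ and exiting at a \emph{different} vertex $w$ with $c(w,y_{\xi(\nu+1)})=i$ --- the multiplicity-two guarantee is exactly what allows one to choose $w\neq v$ --- with the interior of $R_\nu$ supplied by $\sat\lambda$-ness inside $V_i$ while avoiding $\mc Y$ and the other blocks. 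Because every element of every block then appears in the realised path, all witnesses demanded at limit points are automatically present and concentration transfers. If you repair your realisation by including whole blocks rather than single representatives, you will be forced to introduce the multiplicity-two device, at which point your argument becomes the paper's.
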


\begin{proof}
Let $\mc X=\{a_\xi:\xi<\lambda\}$ and $\mc Y=\{y_\xi:\xi<\lambda\}$ denote the $\lambda$-configuration. By setting $a_\xi'=\bigcup \{a_{\xi+i}:i<|I|+1\}$ and $y_\xi'=y_{\xi+|I|+1}$ for $\xi<\lambda$ limit we get that for all limit ordinals $\xi\leq \zeta <\lambda$ there is an $i\in I$ so that $$|\{x\in a_\xi':c(x,y_\zeta')=i\}|\geq 2.$$ As $\{a'_\xi:\xi<\lambda \text{ limit}\},\{y'_\xi:\xi<\lambda \text{ limit}\}$ is also a $\lambda$-configuration in colours $I$, we will suppose that the original $\lambda$-configuration had this property already.

Also, by thinning out, we can suppose that $(\bigcup \mc X)\cap \mc Y=\emptyset$ and for all $i\in I$, $\xi <\lambda$ and $x,x'\in a_\xi$ there are $\lambda$ many disjoint finite $i$-monochromatic paths in $V_i$ from $x$ to $x'$ which avoid $\mc Y$ and all other points of $\bigcup \mc X$.

Define a colouring of the graph $\halff \lambda$ by $$d((\xi,0), (\zeta,1))=i \text{ iff } |\{x\in a_\xi:c(x,y_\zeta)=i\}|\geq 2$$ and $i$ is minimal such. Note that $d$ is well defined by our previous preparation. Now $(\ih)_{\lambda,|I|}$ implies that there is a path $Q$ of colour $i$ and size $\lambda$ concentrated on the main class of $\halff \lambda$ for some $i\in I$.

\begin{sclaim}There is a path $P$ of colour $i$ and order type $\lambda$ in $G\uhp V_i$ concentrated on $\bigcup \mc X$.
\end{sclaim}
\begin{proof}
Let $Q=\{q_\nu:\nu<\lambda\}$ witness the path ordering; recall that each point $q_\nu$ in $Q$ corresponds to a finite set $a_{\xi(\nu)}$ or a single vertex $\{y_{\xi(\nu)}\}$ from the $\lambda$-configuration and we identify $q_\nu$ with this set. Moreover, $q_\nu$ must be of the form $y_{\xi(\nu)}$ for every limit $\nu<\lambda$ as $Q$ is concentrated on the main class of $\halff \lambda$. 

Our goal is to define disjoint finite paths $R_\nu$ of colour $i$ in $G\uhp V_i$ so that $q_\nu\subs R_\nu$ while the concatenation $(R_\nu:\nu<\lambda)$ gives a path of colour $i$ in $G\uhp V_i$. 

Construct  $(R_\nu:\nu<\lambda)$ by induction on $\nu<\lambda$ so that
\begin{enumerate}[(i)]
\item $R_\nu$ is a finite path of colour $i$ in $G\uhp V_i$ and $R_\nu\cap (\bigcup \mc X\cup \mc Y)=q_\nu$,
\item $R_\nu\cap R_\mu =\emptyset$ if $\nu<\mu<\lambda$,
\item $R_\nu=q_\nu$ if $q_\nu=\{y_{\xi(\nu)}\}$,
\end{enumerate}
moreover, if $q_\nu=a_{\xi(\nu)}$ then $\nu=\mu+1$ and we make sure that 
\begin{enumerate}
\item[(iv)] the first point of $R_\nu$ is a vertex $v\in a_{\xi(\nu)}$ so that $c(v,y_{\xi(\mu)})=i$, and
\item[(v)] the last point of $R_\nu$ is a vertex $w\in a_{\xi(\nu)}$ so that $c(w,y_{\xi(\nu+1)})=i$.
\end{enumerate}

\begin{figure}[H]%
\centering

\psscalebox{0.7 0.7} 
{

\begin{pspicture}(0,-1.945)(9.76827,1.945)
\psdots[linecolor=black, dotsize=0.24](0.11826935,-1.745)
\psdots[linecolor=black, dotsize=0.24](8.118269,-1.745)
\rput[bl](0.91826934,-1.945){\Large{$y_{\xi(\mu)}$}}
\rput[bl](8.718269,-1.945){\Large{$y_{\xi(\nu+1)}$}}
\rput[bl](1.3182694,0.055){\Large{$v$}}
\rput[bl](7.3182693,0.055){\Large{$w$}}
\psellipse[linecolor=black, linewidth=0.04, dimen=outer](4.3182693,0.055)(4.2,1.0)
\psline[linecolor=black, linewidth=0.04, linestyle=dashed, dash=0.17638889cm 0.10583334cm, arrowsize=0.05291666666666667cm 2.0,arrowlength=1.4,arrowinset=0.0]{->}(0.11826935,-1.745)(1.9182694,-0.345)
\psline[linecolor=black, linewidth=0.04, linestyle=dashed, dash=0.17638889cm 0.10583334cm, arrowsize=0.05291666666666667cm 2.0,arrowlength=1.4,arrowinset=0.0]{->}(6.3182693,0.055)(7.7182693,-1.345)
\psbezier[linecolor=black, linewidth=0.04, linestyle=dashed, dash=0.17638889cm 0.10583334cm, arrowsize=0.05291666666666667cm 2.0,arrowlength=1.4,arrowinset=0.0]{->}(2.3182693,0.055)(2.3182693,0.855)(2.7182693,1.655)(3.5182693,1.455)(4.3182693,1.255)(3.9182694,-0.945)(4.5182695,-1.345)(5.1182694,-1.745)(5.918269,-1.345)(6.1182694,-0.345)
\psdots[linecolor=black, dotsize=0.24](2.3182693,0.055)
\psdots[linecolor=black, dotsize=0.24](6.3182693,0.055)

\rput[bl](5.7182693,1.455){\Large{$a_{\xi(\nu)}$}}
\rput[bl](2.1182694,1.655){\Large{$R_\nu$}}
\end{pspicture}
}

\caption{Constructing $R_\nu$.}
\end{figure}
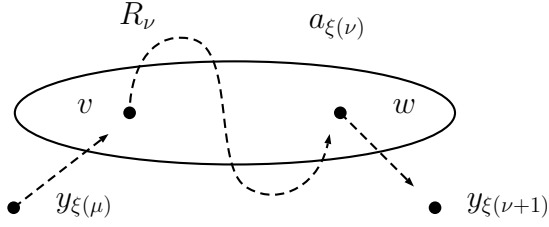

If we can achieve this,  $(R_\nu:\nu<\lambda)$ gives a path of colour $i$ concentrated on $A$.

Note that the only difficulty in this construction is to satisfy the last two requirements; indeed, we always have $\lambda$ many disjoint finite paths of colour $i$ connecting two arbitrary points of any $a_\xi$ (avoiding all other points in question).

How to find the first and last point of $R_\nu$ if $q_\nu=a_{\xi(\nu)}$? As $\nu=\mu+1$ for some $\mu<\lambda$ and by the definition of a path and the colouring $d$ on $\halff \lambda$ we have
$$c(v,y_{\xi(\mu)})=i\text{ for some }v\in a_{\xi(\nu)}$$ and we pick a single such $v\in a_{\xi(\nu)}$ which in turn satisfies $(iv)$ above. 

Second, $d(q_\nu,q_{\nu+1})=i$ hence $\{x\in a_{\xi(\nu)}:c(x,y_{\xi(\nu+1)})=i\}$ has at least two elements so we can pick $$w\in \{x\in a_{\xi(\nu)}:c(x,y_{\xi(\nu+1)})=i\}\setm \{v\}$$ which will satisfy $(v)$ above.

\end{proof}

\end{proof}

\begin{lclaim} \label{clmB}  Let $\kappa$ be an infinite cardinal and $\lambda\leq \cf(\kappa)$. Suppose that $c$ is an $r$-edge colouring of a graph $G=(V,E)$ of type $\halff\kappa$ with $\halff \kappa$-decomposition $(A,B)$ and let $I\subs r$. If for every $\tilde A\in [A]^{<\lambda}$ there is $a\in [A\setm \tilde A]^{<\omega}$ so that $$|B\setm \bigcup\{N(x,i):x\in a, i\in I\}|<\kappa$$ then there is a $\lambda$-configuration $\mc X,\mc Y$ in colours $I$ so that $\bigcup \mc X\subseteq A$.
\end{lclaim}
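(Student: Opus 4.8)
The plan is to build $\mc X$ and $\mc Y$ by a single transfinite recursion on $\xi<\lambda$, at each step feeding the current finite ``footprint'' into the hypothesis to obtain the next block $a_\xi$, and then exploiting $\lambda\leq\cf(\kappa)$ to find a vertex $y_\xi$ of $B$ that simultaneously sees, in some colour of $I$, every block produced so far. Throughout I assume $\lambda$ infinite, which is the case of interest (for finite $\lambda$ the recursion is the same but shorter).

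So first I would run the recursion. Suppose $\{a_\eta,y_\eta:\eta<\xi\}$ have been chosen and put $\tilde A=\bigcup\{a_\eta:\eta<\xi\}$. Each $a_\eta$ is finite and $|\xi|<\lambda$, so $|\tilde A|<\lambda$, i.e.\ $\tilde A\in[A]^{<\lambda}$. The hypothesis then yields a finite set $a_\xi\in[A\setm\tilde A]^{<\omega}$ with
\[
|B\setm N_\xi|<\kappa,\qquad\text{where } N_\xi=\bigcup\{N(x,i):x\in a_\xi,\ i\in I\}.
\]
Since $a_\xi\subseteq A\setm\tilde A$, the family $\{a_\eta:\eta\leq\xi\}$ is automatically pairwise disjoint and contained in $A$, which already secures the requirement on $\mc X$ and the condition $\bigcup\mc X\subseteq A$.

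The key step is the choice of $y_\xi$. I want $y_\xi\in\bigcap\{B\cap N_\eta:\eta\leq\xi\}$, so that $y_\xi$ is joined in some colour of $I$ to a vertex of each $a_\eta$ with $\eta\leq\xi$. Here
\[
B\setm\bigcap\{N_\eta:\eta\leq\xi\}=\bigcup\{B\setm N_\eta:\eta\leq\xi\}
\]
is a union of at most $|\xi|+1<\lambda\leq\cf(\kappa)$ sets, each of size $<\kappa$. This is precisely where $\lambda\leq\cf(\kappa)$ enters: a union of fewer than $\cf(\kappa)$ sets of size $<\kappa$ again has size $<\kappa$. As $|B|=\kappa$, the intersection $\bigcap\{B\cap N_\eta:\eta\leq\xi\}$ still has size $\kappa$, so I may pick $y_\xi$ in it, and in addition distinct from all earlier $y_\eta$ (there being fewer than $\lambda\leq\kappa$ of them) and, if one wishes, outside $\bigcup\{a_\eta:\eta\leq\xi\}$.

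Finally I would verify that $\mc X=\{a_\xi:\xi<\lambda\}$, $\mc Y=\{y_\xi:\xi<\lambda\}$ is a $\lambda$-configuration in colours $I$ with $\bigcup\mc X\subseteq A$: the family $\mc X$ is pairwise disjoint by construction, and for every $\xi\leq\zeta<\lambda$ the choice $y_\zeta\in B\cap N_\xi$ gives exactly $y_\zeta\in\bigcup\{N(x,i):x\in a_\xi,\ i\in I\}$, which is the defining condition. The only genuinely non-routine point is the cardinal computation in the previous paragraph; everything else is bookkeeping that poses no obstacle.
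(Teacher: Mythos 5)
Your proposal is correct and follows essentially the same recursion as the paper: feed the union of the previously chosen finite blocks into the hypothesis to get $a_\xi$, then use $\lambda\leq\cf(\kappa)$ to see that $B$ minus the intersection of the sets $N_\eta$ ($\eta\leq\xi$) has size $<\kappa$, leaving $\kappa$-many candidates for $y_\xi$. The paper additionally restricts the choice of $y_\xi$ to the common neighbourhood $N[\bigcup_{\eta\leq\xi}a_\eta]$, but this is not needed for the configuration as defined, so your streamlined version is fine.
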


\begin{proof} We build the sequences $\mc X=\{a_\xi:\xi<\lambda\}$ and $\mc Y=\{y_\xi:\xi<\lambda\}$ inductively so that $$|B\setm \bigcup\{N(x,i):x\in a_\xi, i\in I\}|<\kappa$$ for all $\xi<\lambda$. Given $\{a_\xi:\xi<\zeta\}$ and  $\{y_\xi:\xi<\zeta\}$ we set $\tilde A=\bigcup\{a_\xi:\xi<\zeta\}$. Our assumption gives a finite set $a_\zeta\in [A\setm \tilde A]^{<\omega}$ so that $$|B\setm \bigcup\{N(x,i):x\in a_\zeta, i\in I\}|<\kappa.$$ 

As $X_\zeta=\bigcup\{a_\xi:\xi\leq \zeta\}$ has size $< \lambda\leq \cf(\kappa)$, 
$X_\zeta$ is contained in an initial segment of the $\halff \kappa$ ordering. In turn, $$|N[X_\zeta]|=\kappa.$$ Finally, as $|X_\zeta|<\kappa$, the set $$Y_\zeta=\{y\in N[X_\zeta]:y\in \bigcup\{N(x,i):x\in a_\xi,i\in I\} \text{ for all }\xi\leq \zeta \}$$ has size $\kappa$. Picking $y_\zeta\in Y_\zeta\setm\{y_\xi:\xi<\zeta\}$ finishes the proof.
\end{proof}

\begin{lclaim} \label{clmC} Suppose that $c$ is an $r$-edge colouring of a graph $G=(V,E)$ of type $\halff\kappa$ with $\halff \kappa$-decomposition $(A,B)$. Let $I\subseteq r$ and $X\subseteq A$. If $$|B\setm \bigcup\{N(x,i):x\in a, i\in I\}|=\kappa$$ for all $a\in [X]^{<\omega}$ then there is a partition $\{X_j:j\in r\setm I\}$ of $X$ so that $$|N(x,j)\cap N(x',j)\cap B|=\kappa$$ for all $x,x'\in X_j$ and $j\in r\setm I$.

In particular,  the sets $X_j$ are $\sat \kappa$ in colour $j$ in $X_j\cup B$ and
if $B\subseteq X$ then there is $j\in I\setm r$ so that $X_j$ is $\kappa$-connected in colour $j$.
\end{lclaim}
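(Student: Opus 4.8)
The plan is to extract the whole partition from a single, carefully chosen ultrafilter on $B$. For $a\in[X]^{<\omega}$ write $N_I[a]=\bigcup\{N(x,i):x\in a,\ i\in I\}$, so the hypothesis reads $|B\setm N_I[a]|=\kappa$ for every finite $a\subseteq X$. First I would record the relevant structure of a graph of type $\halff\kappa$: since $B=\{b_\zeta:\zeta<\kappa\}$ and $X\subseteq A$, every $x=a_{\xi}\in X$ satisfies $N(x)\cap B=\{b_\zeta:\zeta\geq\xi\}$, which is a \emph{co-small} subset of $B$ (its complement has size $|\xi|<\kappa$). This is the signal that the ultrafilter I build should contain every set whose complement has size $<\kappa$.

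Concretely, I would look at the filter $\mc F$ on $B$ generated by the co-$<\kappa$ filter $\{S\subseteq B:|B\setm S|<\kappa\}$ together with the family $\{B\setm N_I[a]:a\in[X]^{<\omega}\}$. The key step is to check that $\mc F$ is proper: a finite intersection of generators reduces to $S\cap(B\setm N_I[a])$, where $S$ is co-small and $a\in[X]^{<\omega}$ is the union of the finitely many finite sets involved; by hypothesis $|B\setm N_I[a]|=\kappa$, and deleting the $<\kappa$ many points of $B\setm S$ still leaves a set of size $\kappa$, in particular a non-empty one. I then extend $\mc F$ to an ultrafilter $U$ on $B$. Because $U$ refines the co-$<\kappa$ filter, every member of $U$ automatically has size $\kappa$: if $|S|<\kappa$ then $B\setm S\in U$, contradicting $S\in U$. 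I expect this properness-and-size bookkeeping to be the only real obstacle, and it is precisely the point where singular $\kappa$ could cause trouble — merely putting the tails $\{b_\zeta:\zeta\geq\xi\}$ into $U$ would not force $U$-members to have full size, which is exactly why I route everything through the co-$<\kappa$ filter.

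With $U$ fixed I would assign to each $x\in X$ a colour. The set $N(x)\cap B$ is co-small, hence lies in $U$, and it is partitioned by colour into the finitely many sets $N(x,j)\cap B$ for $j<r$; so exactly one colour, call it $j(x)$, satisfies $N(x,j(x))\cap B\in U$. Since $B\setm N_I[\{x\}]\in U$, no $i\in I$ can have $N(x,i)\cap B\in U$, and therefore $j(x)\in r\setm I$. Putting $X_j=\{x\in X:j(x)=j\}$ gives the required partition of $X$: if $x,x'\in X_j$ then $N(x,j)\cap B$ and $N(x',j)\cap B$ both lie in $U$, so $N(x,j)\cap N(x',j)\cap B\in U$ has size $\kappa$.

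The two remaining clauses then follow quickly. For $x\neq x'\in X_j$, each of the $\kappa$ many common neighbours $b\in N(x,j)\cap N(x',j)\cap B$ gives a $j$-coloured path $x,b,x'$ lying in $X_j\cup B$, and distinct $b$'s yield internally disjoint paths, so $X_j$ is $\sat\kappa$ in colour $j$ in $X_j\cup B$. Finally, if $B\subseteq X$ then $B=\bigsqcup_{j}(B\cap X_j)$, and since $B\in U$ exactly one index $j_0$ has $B\cap X_{j_0}\in U$; intersecting the common-neighbour sets above with this further $U$-set shows that any two points of $X_{j_0}$ have $\kappa$ many common $j_0$-neighbours inside $X_{j_0}$ itself, so $X_{j_0}$ is $\kappa$-connected in colour $j_0$.
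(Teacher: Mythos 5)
Your proof is correct and follows essentially the same route as the paper: the paper likewise takes a uniform ultrafilter $U$ on $B$ containing every set $B\setminus\bigcup\{N(x,i):x\in a,\ i\in I\}$ and sets $X_j=\{x\in X:N(x,j)\cap B\in U\}$, deducing both the $\sqsubset$-type largeness of common neighbourhoods and the final $\kappa$-connected clause exactly as you do. The only difference is that you spell out the existence of such a uniform ultrafilter via the co-$<\kappa$ filter, a step the paper leaves implicit.
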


\begin{proof}  Take a uniform ultrafilter $U$ on $B$ so that $$B\setm \bigcup\{N(x,i):x\in a,i\in I\}\in U$$ for all $a\in [X]^{<\omega}$. Define $X_j=\{x\in X: N(x,j)\in U\}$ for $j<r$ and note that $X_j=\emptyset$ if $j\in I$ while $\{X_j:j\in r\setm I\}$ partitions $X$.

It is clear that $$|N(x,j)\cap N(x',j)\cap B|=\kappa$$ for all $x,x'\in X_j$ and $j\in r\setm I$ and hence $X_j$ is $\sat \kappa$ in colour $j$. Furthermore, if $B\subseteq X$ then there is a $j\in r\setm I$ so that $X_j\cap B\in U$ and hence $X_j$ is $\kappa$-connected in $j$ as $$|N(x,j)\cap N(x',j)\cap X_j|=\kappa$$  for all $x,x'\in X_j$.
\end{proof}

\begin{lclaim}\label{clmD} Suppose that $H$ is of type $\halff \kappa$ with classes $A,B$, $\lambda<\kappa$ and $c$ is an $r$-edge colouring of $H$ with $I\subseteq r$. If there is no $\lambda$-configuration $\mc X, \mc Y$ in colours $I$ with $\bigcup \mc X\subseteq A$ then there is $\tilde A\in [A]^{<\kappa}$ so that $$|B \setm \bigcup\{N(x,i):x\in a, i\in I\}|=\kappa$$ for all $a\in [A\setm \tilde A]^{<\omega}$.
\end{lclaim}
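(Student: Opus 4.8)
The plan is to prove the contrapositive. Assume no $\tilde A$ as in the conclusion exists; writing $U_a=\bigcup\{N(x,i):x\in a,i\in I\}$, this means that for every $\tilde A\in [A]^{<\kappa}$ there is a finite $a\subseteq A\setm \tilde A$ with $|B\setm U_a|<\kappa$. From this I will build a $\lambda$-configuration $\mc X,\mc Y$ in colours $I$ with $\bigcup\mc X\subseteq A$, contradicting the hypothesis. The easy case is $\lambda\leq \cf(\kappa)$: then the above holds in particular for every $\tilde A\in [A]^{<\lambda}\subseteq [A]^{<\kappa}$, which is exactly the hypothesis of Claim \ref{clmB}, and that claim supplies the configuration directly. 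So I may assume $\cf(\kappa)<\lambda<\kappa$; in particular $\kappa$ is singular, and this is where the work lies.

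The first step is to harvest witnesses. Applying the assumption recursively, I construct pairwise disjoint finite sets $\{a_\xi:\xi<\kappa\}\subseteq [A]^{<\omega}$ with $|B\setm U_{a_\xi}|<\kappa$ for all $\xi$: at stage $\xi<\kappa$ the set $\bigcup_{\eta<\xi}a_\eta$ lies in $[A]^{<\kappa}$, so the assumption applies to it and yields a fresh $a_\xi$. The second step bounds the gaps $G_\xi=B\setm U_{a_\xi}$. Fix an increasing sequence $\langle \kappa_n:n<\cf(\kappa)\rangle$ cofinal in $\kappa$, and for each $\xi$ let $n(\xi)<\cf(\kappa)$ be least with $|G_\xi|<\kappa_{n(\xi)}$. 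The fibres of $\xi\mapsto n(\xi)$ partition $\kappa$ into $\cf(\kappa)$ pieces, and since $\cf(\kappa)<\lambda<\kappa$ one fibre must have size at least $\lambda$ (otherwise $\kappa\leq \cf(\kappa)\cdot\lambda=\lambda<\kappa$). Fixing such a level $n^{*}$ and $\lambda$ indices $\langle \xi_\zeta:\zeta<\lambda\rangle$ in it, every $G_{\xi_\zeta}$ has size $<\kappa_{n^{*}}$, so for each $\zeta<\lambda$
$$\Big|B\setm\bigcap_{\eta\leq\zeta}U_{a_{\xi_\eta}}\Big|=\Big|\bigcup_{\eta\leq\zeta}G_{\xi_\eta}\Big|\leq \lambda\cdot\kappa_{n^{*}}<\kappa.$$

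Finally, since each set $B\cap\bigcap_{\eta\leq\zeta}U_{a_{\xi_\eta}}$ therefore has size $\kappa$, I recursively pick distinct points $y_\zeta$ in it, avoiding $\bigcup_{\zeta<\lambda}a_{\xi_\zeta}$ and the previously chosen $y$'s (possible as only $<\kappa$ points are forbidden). Then $\mc X=\{a_{\xi_\zeta}:\zeta<\lambda\}$ and $\mc Y=\{y_\zeta:\zeta<\lambda\}$ form a $\lambda$-configuration in colours $I$ with $\bigcup\mc X\subseteq A$, the desired contradiction. I expect the singular case to be the only genuine obstacle: once $|\zeta|\geq\cf(\kappa)$, an intersection of $|\zeta|$-many co-small sets $U_{a_\xi}$ can become empty, so the straightforward recursion underlying Claim \ref{clmB} breaks down, and the pigeonhole step — forcing all relevant gaps below a single $\kappa_{n^{*}}$ — is precisely what keeps the initial intersections of size $\kappa$.
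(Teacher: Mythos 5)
Your proof is correct, and while it shares the paper's overall skeleton (argue via the contrapositive and feed the failure of the conclusion into Claim~\ref{clmB}), it handles the crucial singular case by a genuinely different mechanism. The paper fixes an increasing cofinal sequence of \emph{regular} cardinals $(\kappa_\alpha)_{\alpha<\cf(\kappa)}$ with $\kappa_0>\lambda$, applies Claim~\ref{clmB} locally to each initial segment $H\uhp\kappa_\alpha$ (which is of type $\halff{\kappa_\alpha}$, satisfies $\lambda<\cf(\kappa_\alpha)$, and still admits no $\lambda$-configuration), obtains exceptional sets $\tilde A_\alpha\in[A_\alpha]^{<\lambda}$, and sets $\tilde A=\bigcup_\alpha \tilde A_\alpha$; a finite $a\subseteq A\setm\tilde A$ then lies in $A_\alpha\setm\tilde A_\alpha$ for all large $\alpha$, and the witnessing sizes $\kappa_\alpha$ accumulate to $\kappa$. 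You instead work globally in $H$: you harvest $\kappa$ many pairwise disjoint finite witnesses $a_\xi$ with small gaps $G_\xi=B\setm U_{a_\xi}$, and your pigeonhole step --- forcing $\lambda$ many gaps below a single $\kappa_{n^*}$ so that the initial intersections of the corresponding $U_{a_{\xi_\eta}}$ retain size $\kappa$ --- is precisely what lets the recursion underlying Claim~\ref{clmB} survive past $\cf(\kappa)$ steps; your diagnosis of where the naive recursion breaks is accurate. The trade-off is that the paper's reduction is shorter because it reuses Claim~\ref{clmB} as a black box on regular cardinals, whereas your argument is self-contained in the hard case and makes the combinatorial obstruction, and its cure, explicit. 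Since you argue contrapositively you never exhibit $\tilde A$ directly, but none is needed; all the cardinal arithmetic you invoke ($\cf(\kappa)\cdot\lambda=\lambda<\kappa$, $\lambda\cdot\kappa_{n^*}<\kappa$) checks out.
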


\begin{proof} First, suppose that $\kappa=\cf(\kappa)$. Apply Claim \ref{clmB} to the graph $H$ and $\lambda=\kappa$ and find $\tilde A\in [A]^{<\kappa}$ so that $$|B\setm \bigcup\{N(x,i):x\in a, i\in I\}|=\kappa$$ for all $a\in [A\setm \tilde A]^{<\omega}$. 

Second, suppose that $\kappa>\cf(\kappa)$ and fix an increasing cofinal sequence of regular cardinal $(\kappa_\alpha)_{\alpha<\cf(\kappa)}$ in $\kappa$ so that $\kappa_0>\lambda$. Let $H_\alpha$ denote $H\uhp \kappa_\alpha$; $H_\alpha$ is a graph of type $\halff {\kappa_\alpha}$ and let $A_\alpha,B_\alpha$ denote the two classes. Note that $H_\alpha$ still has no $\lambda$-configuration in colours $I$ and hence we can apply Claim \ref{clmB} to the graph $H_\alpha$ with $\lambda<\kappa_\alpha$: there is $\tilde A_{\alpha}\in [A_\alpha]^{<\lambda}$ so that  $$|B_\alpha\setm \bigcup\{N_{H_\alpha}(x,i):x\in a, i\in I\}|=\kappa_\alpha$$ for all $a\in [A_\alpha\setm \tilde A_{\alpha}]^{<\omega}$. 

Let $\tilde A=\bigcup\{\tilde A_{\alpha}:\alpha<cf(\kappa)\}$ and note that $|\tilde A|\leq cf(\kappa)\cdot \lambda<\kappa$. Now, if $a\in [A\setm \tilde A]^{<\omega}$ then $a\subseteq A_\alpha\setm \tilde A_{\alpha}$ for any large enough $\alpha<cf(\kappa)$ and hence $$|B_\alpha\setm \bigcup\{N_{H_\alpha}(x,i):x\in a, i\in I\}|=\kappa_\alpha$$ for any large enough $\alpha<cf(\kappa)$.
In turn $$|B\setm \bigcup\{N_H(x,i):x\in a, i\in I\}|=\kappa.$$

\end{proof}

\begin{proof}[Proof of Lemma \ref{ml2}] Suppose that condition $(a)$  fails. In particular, for all $i\in I$ there is $\lambda_i<\kappa$ and $X^*_i\subs A$ of size less than $\kappa$ so that there is no path of colour $i$  concentrated on $X$ and order type $\lambda_i$ disjoint from $X^*_i$. Let $ \lambda =\max \{\lambda_i:i\in I\}$ and $X^*=\bigcup\{X^*_i:i\in I\}$. Now, there is no path of colour $i\in I$ and of order type $\lambda$ in $X\setm X^*$ concentrated on $X$. 

Now find a graph $H$ of type $\halff \kappa$ in $G$ with main class $X\setm X^*$ and second class $B'$; this can be done by Observation \ref{Hbuild}. As $(\ih)_{\lambda,|I|}$ holds, Claim \ref{clmA} implies that there is no $\lambda$-configuration $\mc X,\mc Y$ in colours $I$ with $\bigcup \mc X\subseteq X\setm X^*$. 

Apply Claim \ref{clmD} in $H$ and find $\tilde A\in [X\setm X^*]^{<\kappa}$ so that $$|B' \setm \bigcup\{N(x,i):x\in a, i\in I\}|=\kappa$$ for all $a\in [X\setm (X^*\cup \tilde A)]^{<\omega}$. Hence Claim \ref{clmC} applied to $X\setm (X^*\cup \tilde A)$ provides the desired partition and hence clause (b) of Lemma \ref{ml2}.

\end{proof}

\subsection{The existence of monochromatic paths}

We arrived at our first main result which shows, together with Lemma \ref{3line}, the existence of large monochromatic paths in edge coloured graphs of type $\halff \kappa$:

\begin{theorem}\label{halff}
  $(\ih)_\kappa$ holds for all infinite $\kappa$. In particular, if $G$ is a graph of type $\halff \kappa$ with a finite-edge colouring then we can find a monochromatic path of size $\kappa$ concentrated on the main class of $G$. 
\end{theorem}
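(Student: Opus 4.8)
The plan is to prove $(\ih)_\kappa$ by induction on the infinite cardinal $\kappa$; the ``In particular'' clause is then immediate, since once $(\ih)_{\kappa,r}$ produces $X\in[A]^\kappa$ and a colour $i$ meeting the three hypotheses of Lemma \ref{3line} in colour $i$, that lemma yields a path $P$ of size $\kappa$ in colour $i$ covering and concentrated on $X$, and being concentrated on $X\subseteq A$ it is concentrated on the main class $A$. For the base $\kappa=\omega$ I already have $(\ih)_\omega$ from Lemma \ref{uftrick} (only condition (1) is in force there). So I would assume $\kappa>\omega$ and that $(\ih)_\lambda$ holds for every infinite $\lambda<\kappa$, and prove $(\ih)_{\kappa,r}$ by a secondary induction on $r$, the case $r=1$ being Observation \ref{baseobs}. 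Throughout I would use the elementary fact that every $X\in[A]^\kappa$ is the main class of a type $\halff\kappa$ subgraph of $G$: writing $A=\{a_\xi:\xi<\kappa\}$, $B=\{b_\xi:\xi<\kappa\}$ and $X=\{a_{\xi_\nu}:\nu<\kappa\}$ with $(\xi_\nu)$ increasing, the vertices $b'_\nu:=b_{\xi_\nu}$ satisfy $\{a_{\xi_\mu},b'_\nu\}\in E$ iff $\mu\le\nu$. Hence Lemma \ref{ml1} is applicable to any such $X$, and moreover each of the three conditions of Lemma \ref{3line} (a statement about paths, neighbourhoods and concentration) passes upward from a subgraph to $G$ since $N_{H'}\subseteq N_G$.

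\textbf{Phase 1.} Fix an $r$-colouring $c$ with main class $A$ and second class $B$. First I would locate a colour $i$ and a set $X\in[A]^\kappa$ that is $\sat\kappa$ in colour $i$ (condition (1)) and satisfies $\utak\kappa i$ (condition (2)). I build a decreasing sequence $A=X_0\supseteq X_1\supseteq\dots$ of $\kappa$-sized sets and an increasing sequence $\emptyset=I_0\subseteq I_1\subseteq\dots$ of colour sets with $|I_k|=k$ and $|N(x,j)\cap N(x',j)\cap B|=\kappa$ for all $x,x'\in X_k$ and $j\in I_k$; this entails that $X_k$ is $\sat\kappa$ in every colour of $I_k$ and is inherited by subsets. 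While $|I_k|<r$ I apply Lemma \ref{ml2} to $X_k,I_k$ (legitimate by the outer hypothesis $(\ih)_\lambda$, $\lambda<\kappa$): in alternative (a) some $i\in I_k$ has $\utak\kappa i$ while $X_k$ is already $\sat\kappa$ in $i$, and Phase 1 is done; in alternative (b) I pass to a $\kappa$-sized piece $X_{k+1}$ and enlarge $I_k$ by the new colour. If (a) never occurs I reach $X_r$ of size $\kappa$ that is $\sat\kappa$ in all $r$ colours. Here I run the proof of Lemma \ref{ml2} with $I=r$: if alternative (a) failed, Claims \ref{clmA} and \ref{clmD} (using $(\ih)_{\lambda,r}$, $\lambda<\kappa$) would give a type $\halff\kappa$ subgraph with some main class $X^\sharp\subseteq X_r$ in which every finite $a\subseteq X^\sharp$ omits $\kappa$ vertices of the second class $B'$ over all colours at once; but $\bigcup_{i<r}N(x,i)=N(x)$, and a single main-class vertex omits fewer than $\kappa$ vertices of $B'$, a contradiction. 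Thus alternative (a) is forced, yielding $X:=X_r$ with conditions (1) and (2).

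\textbf{Phase 2.} Given $X$ and $i$ with (1) and (2), realise $X$ as the main class of a type $\halff\kappa$ subgraph $H$ and apply Lemma \ref{ml1} to $H$ in colour $i$. In alternative (a), $X$ satisfies condition (3) of Lemma \ref{3line} in colour $i$ inside $H$, hence inside $G$; together with (1) and (2) this gives all three conditions for $X$ and $i$, so $(\ih)_{\kappa,r}$ holds. In alternative (b) there is $\tilde X\in[X]^{<\kappa}$ so that $X\setminus\tilde X$ is the main class of a type $\halff\kappa$ subgraph $H'$ avoiding colour $i$, i.e.\ coloured with only $r-1$ colours; the secondary hypothesis $(\ih)_{\kappa,r-1}$ applied to $H'$ produces $X'\subseteq X\setminus\tilde X$ and a colour $i'\neq i$ meeting all three conditions in $H'$, which pass upward to $G$. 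Either way $(\ih)_{\kappa,r}$ holds, completing both inductions.

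The main obstacle is forcing the three conditions to hold \emph{simultaneously for a single set and colour}: condition (3) is not inherited by subsets, whereas (1) and (2) are only secured after shrinking $A$ down to $X$. This is exactly what dictates the two-phase structure—first pin down $i$ with (1),(2) on a subset $X$, then re-run the condition-(3) dichotomy of Lemma \ref{ml1} on a type $\halff\kappa$ subgraph whose main class is precisely $X$. The single most delicate point is the terminal case of Phase 1, where $X_r$ is $\sat\kappa$ in every colour and Lemma \ref{ml2} is not literally applicable; I expect the work to be in checking that its proof degenerates correctly for $I=r$, namely that the partition alternative (b) would be an empty partition of a $\kappa$-sized set, so that the $\utak\kappa i$ alternative is forced by the density of the main class in its second class.
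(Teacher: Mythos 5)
Your proposal is correct and follows essentially the same route as the paper: the same double induction on $\kappa$ and $r$, with Lemma \ref{ml2} (via Claims \ref{clmA}--\ref{clmD}) producing a set that is $\sat\kappa$ and satisfies $\utakk\kappa$ in some colour, and Lemma \ref{ml1} plus the secondary hypothesis $(\ih)_{\kappa,r-1}$ securing condition (3). The only deviations are cosmetic: the paper performs the condition-(3) reduction first and phrases your Phase 1 as choosing a maximal colour set $I$, and it settles the terminal case $I=r$ by applying $(\ih)_{\lambda^*,r}$ to the initial segment $H\uhp\lambda^*$ rather than by your (equally valid) contradiction with Claim \ref{clmD}.
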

\begin{proof} We prove $(\ih)_{\kappa,r}$ by induction on $\kappa$ and $r\in \omega$. $(\ih)_{\omega}$ holds  by Lemma \ref{uftrick} and Lemma \ref{3line} so we suppose that $\kappa>\omega$. Also, $(\ih)_{\kappa,1}$ holds by Observation \ref{baseobs}.



Now fix an $r$-edge colouring of a graph $G$ of type $\halff \kappa$ with $\halff \kappa$-decomposition $(A,B)$. 

First, we can suppose that any $X\in[A]^\kappa$ satisfies condition (3) of Lemma \ref{3line} \emph{in all colours}. Indeed, given $X$ we can find a graph $H_X$ of type $\halff \kappa$ in $G$ with main class $X$ (by applying Observation \ref{Hbuild}). Given any colour $i<r$, Lemma \ref{ml1} applied to $H_X$ and colour $i$ tells us that if $X$ fails condition (3) of Lemma \ref{3line} in colour $i$ then we can find a graph $H'_X$ of type $\halff \kappa$ (with main class $X$ minus a set of size $<\kappa$) which is only coloured by $r\setm\{i\}$. Hence we can apply the inductive hypothesis $(\ih)_{\kappa,r-1}$ to $H'_X$ which finishes the proof.

Now, find a \emph{maximal} $I\subseteq r$ so that there is $X\in [A]^{\kappa}$ such that $X$ is $\sat \kappa$ in all colours $i\in I$. Fix such an $I$ and $X$. The following claim finishes the proof.
\begin{lclaim} There is $i\in I$ such that $\utak \kappa i$ holds for $X$.
\end{lclaim}
\begin{proof}Suppose that $X$ fails $\utak \kappa i$ for all $i\in I$. If $|I|<r$ then apply Lemma \ref{ml2} in $G$ to the set $X$ and set of colours $I$. As  $X$ fails $\utak \kappa i$ for all $i\in I$, condition (b) of Lemma \ref{ml2} must hold; in turn, there is a colour $j\in r\setm I$ and a set $X_j\in [X]^\kappa$ so that $X_j$ is $\sat \kappa$ in colour $j$ as well. The fact that $X_j$ is $\sat \kappa$ in each colour $i\in I\cup \{j\}$ contradicts the maximality of $I$.

Hence $I=r$ must hold. Now, for each $i<r$ there is $\lambda_i<\kappa$ and $A^*_i\subs A$ of size less than $\kappa$ so that there is no path of colour $i$ concentrated on $X$ which has order type $\lambda_i$ and is disjoint from $A^*_i$. Let $ \lambda^* =\max \{\lambda_i:i<r\}$ and $A^*=\bigcup\{A^*_i:i<r\}$. Now, there is no path of colour $i<r$ and of order type $\lambda^*$ which is concentrated on $X$ and is disjoint from $A^*$. There is a graph $H$ of type $\halff \kappa$ in $G$ with main class $X\setm A^*$ (by Observation \ref{Hbuild}) and the initial segment $H\uhp \lambda^*$ is of type $\halff {\lambda^*}$. As $(\ih)_{\lambda^*,r}$ holds, we can find a path of type $\lambda^*$ in $H\uhp \lambda^*$ which is concentrated on the main class and hence on $X$. This path is also disjoint from $A^*$ which contradicts our previous assumption.
\end{proof}

\end{proof}


\section{The first decomposition theorem} \label{firstdec}

Our goal now is to prove a path decomposition result for a large class of bipartite graphs which contains $\halff \kappa$.

\begin{definition} Suppose that $\gr$ is a graph, $A\subseteq V$ and $\kappa$ is a cardinal. We say that $A$ is $\cent {\mc A} \kappa$ (in $G$) iff $\mc A=\{(A^i_\alpha)_{\alpha<\lambda_i}:i\in I\}$ for some finite set $I$ so that 
\begin{enumerate}
 \item $A^i_\alpha \subseteq A^i_\beta$ if $\alpha<\beta<\lambda_i$ and $i\in I$,
\item $A\subseteq \bigcup\{A^i_\alpha:\alpha<\lambda_i\}$ for each $i\in I$, and
\item $$|N_G\bigl[\bigcap_{i\in I}A^i_{\alpha_i}\bigr]|\geq\kappa$$ for all $(\alpha_i)_{i\in I}\in \Pi_{i\in I}\lambda_i$.
\end{enumerate}
\end{definition}

In this section, $\mc A$ will always denote a finite set of $\subseteq$-increasing families (indexed by $I$) and $\vec \lambda=(\lambda_i)_{i\in I}$ denotes the length of these families.

Given $\mc A$ and  $\vec{\alpha}=(\alpha_i)_{i\in I}\in \Pi \vec \lambda$ we will write $[\vec{\alpha}]_{\mc A}$ for $\bigcap_{i\in I}A^i_{\alpha_i}$. We call sets of the form $[\vec{\alpha}]_{\mc A}$ an $\mc A$-box.  Furthermore, $\vec \alpha \leq \vec \beta$ will stand for $\alpha_i\leq \beta_i$ for all $i\in I$.

Note that if $A$ is $\cent {\emptyset} \kappa$ then $|N_G[A]|=\kappa$. Also, the main class of a graph $G$ of type $\halff \kappa$ is clearly  $\cent {\mc A} \kappa$ where $\mc A$ is a single increasing cover formed by the initial segments of the $\halff \kappa$ ordering.

Our final goal in this section is to prove the following:

\begin{theorem}\label{centeredcover} Suppose that $\gr$ is a bipartite graph on classes $A,B$ where $|A|=\kappa$. Suppose that $A$ is $\cent {\mc A} \kappa$ for some $\mc A$. Then for any finite edge colouring of $G$, $A$ is covered by disjoint monochromatic paths of different colours.
\end{theorem}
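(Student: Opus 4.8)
The plan is to prove Theorem \ref{centeredcover} by induction on the number of colours $r$, with a secondary induction on $\kappa$ to dispose of leftovers of size $<\kappa$, using Theorem \ref{halff} together with Lemma \ref{3line} to convert ``connectivity in a single colour'' into genuine covering paths, and using colour-elimination dichotomies in the spirit of Lemma \ref{ml1} and Lemma \ref{ml2} to drive the recursion. The one consequence of the $\cent{\mc A}\kappa$ hypothesis that I use at the outset is that it forces $A$ to be $\sat\kappa$: since each family $(A^i_\alpha)_{\alpha<\lambda_i}$ is an increasing cover of $A$, any two vertices $u,v\in A$ lie in a common box $[\vec\alpha]_{\mc A}$, and then $N_G[[\vec\alpha]_{\mc A}]\subseteq N_G[\{u,v\}]$ has size $\geq\kappa$, which yields $\kappa$ internally disjoint paths $u\,b\,v$ through distinct common neighbours $b\in B$. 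More importantly, every box is a set all of whose subsets still have $\geq\kappa$ common neighbours --- the kind of local density on which Observation \ref{Hbuild} and the arguments of Lemma \ref{ml1} and Lemma \ref{ml2} rest.

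For the inductive step I fix a colour $i$ and run the $\cent{\mc A}\kappa$-analogue of Lemma \ref{ml1}: either $A$ (after deleting a set of size $<\kappa$) satisfies condition (3) of Lemma \ref{3line} in colour $i$, or there is $\tilde A\in[A]^{<\kappa}$ such that $A\setm\tilde A$ is again $\cent{\mc A'}\kappa$ but now computed in the reduced graph $G_{\neq i}=(V,c^{-1}(r\setm\{i\}))$. In the second alternative the inductive hypothesis for $r-1$ colours covers $A\setm\tilde A$ by disjoint monochromatic paths whose colours avoid $i$, and the small set $\tilde A$ is absorbed afterwards: every $x\in\tilde A$ lies in boxes with $\kappa$ common neighbours, hence has $\geq\kappa$ neighbours in some single colour, and since each already-built path has $\kappa$ vertices with $\kappa$ free common neighbours to spare, $x$ can be spliced into a path of that colour without creating a new colour class. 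This is the branch that makes the induction on $r$ (and, through $\tilde A$, on $\kappa$) terminate; in particular when $r=1$ the reduction branch is vacuous, the remaining conditions are forced, and the case is closed directly by Lemma \ref{3line}.

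If the reduction branch fails for every colour --- i.e.\ condition (3) of Lemma \ref{3line} holds in all colours for every $X\in[A]^\kappa$ (apply the dichotomy to each such $X$ and invoke the inductive hypothesis whenever it degenerates) --- then I argue as in the proof of Theorem \ref{halff}. Choosing a maximal set of colours $I$ for which some $X\in[A]^\kappa$ is $\sat\kappa$ in every $i\in I$ and splitting by a uniform ultrafilter as in Claim \ref{clmC}, I obtain a partition $\{X_j:j\in r\setm I\}$ of $A$ (minus a small set) with each $X_j$ being $\sat\kappa$ in colour $j$; feeding $(\ih)_{\lambda}$ for $\lambda<\kappa$ into the $\cent{\mc A}\kappa$-analogue of Lemma \ref{ml2} and using the maximality of $I$ exactly as in Theorem \ref{halff} upgrades this to $\utak{\kappa}{j}$, while the standing assumption gives condition (3) for $X_j$ in colour $j$. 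Thus each $X_j$ satisfies all three hypotheses of Lemma \ref{3line} in its own colour $j$, so Lemma \ref{3line} covers each $X_j$ by a single colour-$j$ path; these paths have pairwise different colours, and the small remainder is absorbed as before.

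The remaining point is disjointness. Since $G$ is bipartite every monochromatic path alternates between $A$ and $B$, so the $A$-parts of the covering paths are automatically disjoint and the only possible collisions occur in $B$; these are avoided by building the paths through fresh common neighbours, exploiting that the $\utakk\kappa$ property of each $X_j$ supplies $\kappa$-many \emph{pairwise disjoint} concentrated paths of each order type, so that at every stage of the construction a common neighbour can be chosen away from the $(<\kappa)$-many used so far. The main obstacle, and where the real work lies, is establishing the centered versions of Lemma \ref{ml1} and Lemma \ref{ml2}: in a graph of type $\halff\kappa$ both dichotomies are driven by the \emph{linear} $\halff\kappa$-ordering together with Fodor's pressing-down lemma along it, whereas for a $\cent{\mc A}\kappa$ set one must run the same pressing-down and $\lambda$-configuration arguments over the \emph{directed system of boxes} $\{[\vec\alpha]_{\mc A}:\vec\alpha\in\Pi\vec\lambda\}$, simultaneously across the finitely many families of $\mc A$, all the while guaranteeing that the degenerate alternative leaves the remainder centered in $G_{\neq i}$ rather than merely $\sat\kappa$.
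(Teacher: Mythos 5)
The central gap is that your argument rests on unproved ``centered analogues'' of Lemma \ref{ml1} and Lemma \ref{ml2}, which you yourself flag as where the real work lies --- but the paper never proves such analogues, and for good reason: the pressing-down arguments in Lemma \ref{ml1} are tied to the linear well-order of an $\halff\kappa$-decomposition and a nice chain of elementary submodels, and there is no club structure on the directed system of boxes $\Pi\vec\lambda$ along which Fodor's lemma could be applied. The paper's actual route sidesteps this entirely: Lemma \ref{embed} embeds a genuine copy $H$ of $\halff\kappa$ with main class inside $A$ (this is the only place the $\cent{\mc A}\kappa$ hypothesis is converted into $\halff\kappa$-structure), Theorem \ref{halff} is applied inside $H$ to produce \emph{one} set $X$ satisfying all three conditions of Lemma \ref{3line} in \emph{one} colour $i$, and Lemma \ref{indedges} (a minimality-of-$|\ran(c\uhp E(G_0))|$ argument) then attaches to $X$ a system of $\kappa$ independent colour-$i$ edges whose $B$-side $Y$ is $\den{\mc A}\kappa$. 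Without some such transfer mechanism, your first branch has no foundation, and the second branch (partitioning $A$ into sets $X_j$ each carrying all three conditions in its own colour) is not what the paper does and is not needed.

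Two further steps would fail as written. First, the absorption of $\tilde A$: knowing $|N_G(x,j)|\geq\kappa$ for some colour $j$ does not let you splice $x$ into an already-built colour-$j$ path, since that path's $B$-vertices may be disjoint from $N_G(x,j)$, and there need not even be a colour-$j$ path in the decomposition produced by the inductive hypothesis. The paper has no absorption step; instead it enlarges the colour-$i$ set to $\bar X=X\cup \{x\in A:|N_G(x,i)\cap Y|=\kappa\}$, reserves a $\den{\mc A}\kappa$ set $Y_1\subseteq Y$, covers \emph{all} of $\bar X$ by a single colour-$i$ path $P$ avoiding $Y_1$, and observes that every $x\in A\setm P$ then loses fewer than $\kappa$ neighbours of $Y_1$ when colour $i$ is deleted --- exactly the hypothesis of Lemma \ref{cover} needed to keep $A\setm P$ centered in the colour-$i$-free graph $G_1$, so that the induction on $r$ closes with no leftover set at all. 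This $\bar X$/$Y_1$ manoeuvre is the heart of the inductive step and is absent from your proposal. Second, your disjointness argument (``choose a common neighbour away from the $<\kappa$-many used so far'') undercounts: each covering path occupies $\kappa$ vertices of $B$, so later paths must avoid $\kappa$-many vertices, which forces one to reserve pairwise disjoint $\den{\mc A}\kappa$ portions of $B$ in advance, as the paper does with the split $B_0=B_0^0\cup B_0^1$ and the reserved set $Y_1$.
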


We start with basic observations:

\begin{obs}\label{firstcentobs} Suppose that $A$ is $\cent {\mc A} \kappa$ in a graph $G$ and $\vec \alpha, \vec \beta\in \Pi \vec \lambda$.
\begin{enumerate}
	\item If $\vec \alpha\leq \vec \beta$ then $[\vec \alpha]_{\mc A}\subseteq [\vec \beta]_{\mc A}$ and hence $N_G[[\vec \beta]_{\mc A}]\subseteq N_G[[\vec \alpha]_{\mc A}]$;
	\item $N_G[[\vec \gamma]_{\mc A}]\subseteq N_G[[\vec \alpha]_{\mc A}]\cap N_G[[\vec \beta]_{\mc A}]$ for $\gamma=\max_\leq\{\vec \alpha,\vec \beta\}$;
	\item for every finite $F\subseteq A$ there is an $\mc A$-box $Z$ covering $F$.
\end{enumerate}
In particular, any two points of $A$ are joined by $\kappa$-many disjoint paths of length 2 and hence $A$ is $\sat \kappa$.
\end{obs}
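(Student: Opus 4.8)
The plan is to read all four assertions straight off the monotonicity that is built into the definition of $\cent{\mc A}{\kappa}$; none of them requires more than bookkeeping with the cover and intersection conditions.

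For (1), I would first unwind $\vec\alpha\leq\vec\beta$ to mean $\alpha_i\leq\beta_i$ for every $i\in I$, so that condition (1) of centeredness gives $A^i_{\alpha_i}\subseteq A^i_{\beta_i}$ for each $i$; intersecting over the finite index set $I$ yields $[\vec\alpha]_{\mc A}\subseteq[\vec\beta]_{\mc A}$. The neighbourhood inclusion then follows because $N_G[\cdot]$ is antitone with respect to $\subseteq$: if $F\subseteq F'$ then $N_G[F']=\bigcap_{v\in F'}N_G(v)\subseteq\bigcap_{v\in F}N_G(v)=N_G[F]$, an intersection over a larger index set being the smaller one. Part (2) is then immediate: writing $\vec\gamma=\max_\leq\{\vec\alpha,\vec\beta\}$ coordinatewise, we have $\vec\alpha\leq\vec\gamma$ and $\vec\beta\leq\vec\gamma$, so (1) gives both $N_G[[\vec\gamma]_{\mc A}]\subseteq N_G[[\vec\alpha]_{\mc A}]$ and $N_G[[\vec\gamma]_{\mc A}]\subseteq N_G[[\vec\beta]_{\mc A}]$, whence the claimed containment in the intersection.

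For (3), given a finite $F\subseteq A$, I would use condition (2) of centeredness: for each $i\in I$ and each $v\in F$ there is an ordinal $\alpha^v_i<\lambda_i$ with $v\in A^i_{\alpha^v_i}$. Since $F$ is finite, $\alpha_i:=\max_{v\in F}\alpha^v_i$ is again an ordinal below $\lambda_i$, and by monotonicity $F\subseteq A^i_{\alpha_i}$. Setting $\vec\alpha=(\alpha_i)_{i\in I}$, the box $Z=[\vec\alpha]_{\mc A}=\bigcap_{i\in I}A^i_{\alpha_i}$ contains $F$, as required. The only step deserving a word of care is that the maximum of the finitely many ordinals $\alpha^v_i$ stays strictly below $\lambda_i$, which is clear as it equals one of them.

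Finally, for the ``in particular'' statement, I would fix $v\neq w\in A$. By (3) there is an $\mc A$-box $Z$ with $\{v,w\}\subseteq Z$, and condition (3) of centeredness gives $|N_G[Z]|\geq\kappa$. Since $v,w\in Z$ we have $N_G[Z]\subseteq N_G(v)\cap N_G(w)$, so every $u\in N_G[Z]$ is a common neighbour of $v$ and $w$; as $G$ has no loops, $u\notin\{v,w\}$, and thus $v,u,w$ is a path of length two. Distinct choices of $u$ give paths sharing only the endpoints $v,w$, so these $\kappa$-many paths witness that $v$ and $w$ are joined by $\kappa$-many finite paths meeting only in $v$ and $w$, which is precisely the definition of $\sat\kappa$. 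I do not expect any genuine obstacle in this observation; the entire argument is monotonicity and a finite-cover bookkeeping, the most substantive point being the index-maximisation in (3).
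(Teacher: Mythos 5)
Your proof is correct; the paper states this observation without proof, and your argument is exactly the intended one: coordinatewise monotonicity of the covers gives (1) and (2), a finite maximum over the index set gives (3), and a box containing $\{v,w\}$ together with condition (3) of centeredness supplies the $\kappa$-many common neighbours witnessing $\sat\kappa$. No gaps.
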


Given a set of increasing covers $\mc A=\{(A^i_\alpha)_{\alpha<\lambda_i}:i\in I\}$ of $A$ and $X\subseteq A$ we write $\mc A\uhp X$ for  $\{(A^i_\alpha\cap X)_{\alpha<\lambda_i}:i\in I\}$.

\begin{obs} Suppose that $A$ is $\cent {\mc A} \kappa$ in a graph $G$. Let $X\subseteq A$, $\vec \alpha\in \vec \lambda$ and $H$ denote the subgraph in $G$ spanned by $X\cup N_G[[\vec \alpha]_{\mc A}]$. Then $X$ is $\cent {\mc A\uhp X} \kappa$ in $H$.

\end{obs}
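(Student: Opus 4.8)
The plan is to verify directly the three clauses in the definition of $\cent{\mc A\uhp X}\kappa$ for the set $X$ inside the induced subgraph $H$. Recall that $\mc A\uhp X=\{(A^i_\alpha\cap X)_{\alpha<\lambda_i}:i\in I\}$, so the finite index set $I$ and the lengths $\vec\lambda$ are unchanged, and for $\vec\beta\in\Pi\vec\lambda$ we have $[\vec\beta]_{\mc A\uhp X}=X\cap [\vec\beta]_{\mc A}$. Clauses (1) and (2) I would dispose of immediately: intersecting each $\subseteq$-increasing family $(A^i_\alpha)_{\alpha<\lambda_i}$ with $X$ keeps it $\subseteq$-increasing, and since $X\subseteq A\subseteq \bigcup_{\alpha<\lambda_i}A^i_\alpha$ we get $X\subseteq \bigcup_{\alpha<\lambda_i}(A^i_\alpha\cap X)$ for each $i\in I$.

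The only real content is clause (3): given $\vec\beta\in\Pi\vec\lambda$, one must show $|N_H[[\vec\beta]_{\mc A\uhp X}]|\geq\kappa$. Since $H$ is the induced (spanned) subgraph on $V(H)=X\cup N_G[[\vec\alpha]_{\mc A}]$, we have $N_H[F]=N_G[F]\cap V(H)$ for every $F\subseteq V(H)$, and note $[\vec\beta]_{\mc A\uhp X}\subseteq X\subseteq V(H)$. The key step is to put $\vec\gamma=\max_\leq\{\vec\alpha,\vec\beta\}$ and to establish the inclusion
$$N_G[[\vec\gamma]_{\mc A}]\subseteq N_H[[\vec\beta]_{\mc A\uhp X}].$$
Granting this, clause (3) is immediate, because $A$ being $\cent{\mc A}\kappa$ in $G$ gives $|N_G[[\vec\gamma]_{\mc A}]|\geq\kappa$.

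To prove the inclusion I would fix $w\in N_G[[\vec\gamma]_{\mc A}]$ and apply Observation \ref{firstcentobs}(2), which yields $N_G[[\vec\gamma]_{\mc A}]\subseteq N_G[[\vec\alpha]_{\mc A}]\cap N_G[[\vec\beta]_{\mc A}]$. From $w\in N_G[[\vec\alpha]_{\mc A}]\subseteq V(H)$ the vertex $w$ lives in $H$, and from $w\in N_G[[\vec\beta]_{\mc A}]$ it is $G$-adjacent to every vertex of $[\vec\beta]_{\mc A}$, in particular to every vertex of $[\vec\beta]_{\mc A\uhp X}=X\cap[\vec\beta]_{\mc A}\subseteq V(H)$; as $H$ is induced, these edges survive in $H$, so $w\in N_H[[\vec\beta]_{\mc A\uhp X}]$. (The degenerate case $[\vec\beta]_{\mc A\uhp X}=\emptyset$ is even easier, since then $N_H[\emptyset]=V(H)\supseteq N_G[[\vec\alpha]_{\mc A}]$ already has size $\geq\kappa$.) I expect no genuine obstacle here: the whole argument is just the monotonicity packaged in Observation \ref{firstcentobs} together with the fact that $H$ contains both $N_G[[\vec\alpha]_{\mc A}]$ and $X$, which is precisely what makes the common neighbourhood of the larger box $[\vec\gamma]_{\mc A}$ a legitimate witness surviving inside $H$.
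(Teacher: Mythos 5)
Your proof is correct and is exactly the routine verification the paper omits (the observation is stated without proof): the key inclusion $N_G[[\vec\gamma]_{\mc A}]\subseteq N_H[[\vec\beta]_{\mc A\uhp X}]$ for $\vec\gamma=\max_\leq\{\vec\alpha,\vec\beta\}$ is just the monotonicity of Observation \ref{firstcentobs} combined with the fact that $H$ is induced on $X\cup N_G[[\vec \alpha]_{\mc A}]$. Nothing is missing.
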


\begin{obs}\label{cofinobs} Suppose that $(\tilde A^i_\alpha)_{\alpha<\tilde\lambda_i}$ is a cofinal subsequence of $(A^i_\alpha)_{\alpha<\lambda_i}$ for each $i\in I$. Let $\mc A$ and $\tilde {\mc A}$ denote $\{(A^i_\alpha)_{\alpha<\lambda_i}:i\in I\}$ and $\{(\tilde A^i_\alpha)_{\alpha<\tilde\lambda_i}:i\in I\}$ respectively. Then a set of vertices $A$ in a graph $\gr$ is $\cent {\mc A} \kappa$ iff $\cent {\tilde{\mc A}} \kappa$. 

In particular, we can always suppose that $\lambda_i=\cf(\lambda_i)$, each cover is strictly increasing and hence $\lambda_i\leq |A|$.
\end{obs}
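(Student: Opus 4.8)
The mechanism behind both assertions is that cofinal subsequences of increasing covers produce the same family of $\mc A$-boxes up to $\subseteq$-cofinal interleaving, combined with the fact that $F\mapsto N_G[F]$ is $\subseteq$-antitone (Observation \ref{firstcentobs}). Writing $\tilde A^i_\alpha=A^i_{\sigma_i(\alpha)}$ for a strictly increasing $\sigma_i\colon\tilde\lambda_i\to\lambda_i$ with cofinal range, I would first record two dual facts. First, every $\tilde{\mc A}$-box equals the $\mc A$-box $\bigcap_i A^i_{\sigma_i(\alpha_i)}$, so the $\tilde{\mc A}$-boxes form a subfamily of the $\mc A$-boxes. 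Second, given any $\vec\alpha\in\Pi\vec\lambda$, cofinality of $\ran\sigma_i$ and clause (1) give $\beta_i<\tilde\lambda_i$ with $A^i_{\alpha_i}\subseteq\tilde A^i_{\beta_i}$, whence $[\vec\alpha]_{\mc A}\subseteq[\vec\beta]_{\tilde{\mc A}}$ and therefore $N_G[[\vec\beta]_{\tilde{\mc A}}]\subseteq N_G[[\vec\alpha]_{\mc A}]$. Since $\bigcup_\alpha\tilde A^i_\alpha=\bigcup_\alpha A^i_\alpha$, the covering clause (2) is also insensitive to the passage.

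Granting this, the equivalence falls out directly. In the forward direction clauses (1) and (2) for $\tilde{\mc A}$ are immediate, and clause (3) holds because by the first fact each $\tilde{\mc A}$-box is already an $\mc A$-box of neighbourhood size $\geq\kappa$. Conversely, clauses (1),(2) transfer the same way, and for clause (3) I would take an arbitrary $\mc A$-box $[\vec\alpha]_{\mc A}$, enlarge it to a $\tilde{\mc A}$-box $[\vec\beta]_{\tilde{\mc A}}$ by the second fact, and combine $|N_G[[\vec\beta]_{\tilde{\mc A}}]|\geq\kappa$ with the antitone containment to get $|N_G[[\vec\alpha]_{\mc A}]|\geq\kappa$. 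I expect no genuine difficulty here; the only point demanding care is keeping the direction of the inclusions in the second fact straight.

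For the ``in particular'' clause I would normalise each family in three steps, invoking the equivalence just proved to stay within $\cent{\mc A}\kappa$. First, replace each $A^i_\alpha$ by $A^i_\alpha\cap A$; this only shrinks boxes, hence enlarges the sets $N_G[\cdot]$, so $\cent{\mc A}\kappa$ is preserved and all sets become subsets of $A$. Second, reindex each family by its distinct values in $\subseteq$-increasing order: this alters neither the union nor the collection of $\mc A$-boxes, so the property persists, and the family becomes a \emph{strictly} increasing chain $(C^i_\gamma)_{\gamma<\mu_i}$ in $A$; picking $x_\gamma\in C^i_{\gamma+1}\setm C^i_\gamma$ injects the successor ordinals below $\mu_i$ into $A$, so $|\mu_i|\leq|A|=\kappa$. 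Third, pass to a cofinal subsequence of length $\cf(\mu_i)$: a cofinal subsequence of a strictly increasing chain remains strictly increasing, its length $\cf(\mu_i)$ is regular, and $\cf(\mu_i)\leq|\mu_i|\leq\kappa$, so all three normalisations hold at once. The only subtlety I would flag is that these steps must be carried out in this order, since a cofinal subsequence of a merely weakly increasing family need not be strictly increasing.
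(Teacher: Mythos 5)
Your argument is correct. The paper states this as an Observation with no proof, and your verification is exactly the routine argument it relies on: antitonicity of $F\mapsto N_G[F]$ turns the mutual cofinal interleaving of the $\mc A$- and $\tilde{\mc A}$-boxes into the stated equivalence, and your three-step normalisation (intersect with $A$, collapse to a strictly increasing chain, then pass to a cofinal subsequence of regular length) is carried out in the right order and correctly supplies the bound $\lambda_i\leq|A|$ that the paper's ``hence'' glosses over.
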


We say that a set of vertices $Y\subseteq V$ is $\den {\mc A} \kappa$ iff $$|Y\cap N_G[[\vec \alpha]_{\mc A}]|\geq \kappa$$ for all $\vec\alpha\in \Pi \vec\lambda$.

\begin{obs}\label{denseobs}Suppose that $A$ is $\cent {\mc A} \kappa$ in a graph $G$ and $Y\subseteq V$ is $\den {\mc A} \kappa$. Then
\begin{enumerate}
	\item $Y\cap N_G[[\vec \alpha]_{\mc A}]$ is $\den {\mc A} \kappa$ for all $\vec\alpha\in \Pi \vec\lambda$, and
	\item for any $X\subseteq A$, $X$ is $\cent {\mc A\uhp X} \kappa$ in $G\uhp (X\cup Y)$. 
\end{enumerate}
\end{obs}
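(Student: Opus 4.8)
The plan is to verify both clauses directly from the definitions, leaning on the monotonicity of common neighbourhoods recorded in Observation \ref{firstcentobs}. Neither clause should present a real difficulty; the work is entirely bookkeeping with the $\mc A$-boxes.

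For clause (1) I would fix $\vec\alpha,\vec\beta\in\Pi\vec\lambda$ and set $\vec\gamma=\max_\leq\{\vec\alpha,\vec\beta\}$, which again lies in $\Pi\vec\lambda$ since $\gamma_i=\max\{\alpha_i,\beta_i\}<\lambda_i$ for each $i$. Observation \ref{firstcentobs}(2) gives $N_G[[\vec\gamma]_{\mc A}]\subseteq N_G[[\vec\alpha]_{\mc A}]\cap N_G[[\vec\beta]_{\mc A}]$, whence
$$Y\cap N_G[[\vec\gamma]_{\mc A}]\subseteq \bigl(Y\cap N_G[[\vec\alpha]_{\mc A}]\bigr)\cap N_G[[\vec\beta]_{\mc A}].$$
Because $Y$ is $\den{\mc A}\kappa$ the left-hand side has size at least $\kappa$, and since $\vec\beta$ was arbitrary this is exactly the statement that $Y\cap N_G[[\vec\alpha]_{\mc A}]$ is $\den{\mc A}\kappa$.

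For clause (2) I would write $G'=G\uhp(X\cup Y)$ and $X^i_\alpha=A^i_\alpha\cap X$, so that $\mc A\uhp X=\{(X^i_\alpha)_{\alpha<\lambda_i}:i\in I\}$. The increasing and covering conditions of being $\cent{\mc A\uhp X}\kappa$ are immediate: intersecting with $X$ preserves inclusions, and $X\subseteq A\subseteq\bigcup_\alpha A^i_\alpha$ yields $X=\bigcup_\alpha X^i_\alpha$. The only substantive point is condition (3). Here I would note $\bigcap_i X^i_{\alpha_i}=X\cap[\vec\alpha]_{\mc A}\subseteq[\vec\alpha]_{\mc A}$, so by monotonicity of $F\mapsto N_G[F]$ we get $N_G[X\cap[\vec\alpha]_{\mc A}]\supseteq N_G[[\vec\alpha]_{\mc A}]$; combining this with the induced-subgraph identity $N_{G'}[F]=N_G[F]\cap(X\cup Y)$ valid for $F\subseteq X\cup Y$ gives
$$N_{G'}\bigl[\textstyle\bigcap_i X^i_{\alpha_i}\bigr]\supseteq N_G[[\vec\alpha]_{\mc A}]\cap Y,$$
and density of $Y$ forces this to have size at least $\kappa$.

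The main (and essentially only) thing to watch is the degenerate case where $X\cap[\vec\alpha]_{\mc A}=\emptyset$: there $N_{G'}[\emptyset]$ must be read as the whole vertex set $X\cup Y$ of $G'$, but the displayed containment and the bound $|Y|\geq\kappa$ (itself a consequence of $Y$ being $\den{\mc A}\kappa$) still hold, so the argument goes through uniformly.
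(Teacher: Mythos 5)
Your proof is correct and is exactly the routine verification the paper has in mind: it states this as an Observation with no proof, leaving the bookkeeping with $\mc A$-boxes (the coordinatewise maximum trick from Observation \ref{firstcentobs}(2) for clause (1), and the antitonicity of $F\mapsto N_G[F]$ together with $N_{G\uhp (X\cup Y)}[F]=N_G[F]\cap(X\cup Y)$ for clause (2)) to the reader. Your handling of the degenerate empty-box case is a sensible extra precaution and does not conflict with anything in the paper.
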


Our first non-trivial result connects the previously developed theory of $\halff \kappa$ to this new notion of $\cent {\mc A} \kappa$ subsets.

\begin{lemma}\label{embed} Suppose that $\gr$ is a bipartite graph on classes $A,B$ where $|A|=\kappa$, and $A$ is $\cent {\mc A} \kappa$ for some $\mc A$. Then there is a copy $H$ of the graph $\halff \kappa$ with main class $X\subseteq A$. 
\end{lemma}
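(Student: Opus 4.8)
The plan is to reduce the whole statement to a single application of Observation \ref{Hbuild}. Concretely, it suffices to produce a set $X\in[A]^\kappa$ together with a $\subseteq$-increasing decomposition $X=\bigcup_{\alpha<\cf(\kappa)}X_\alpha$ such that $|X_\alpha|<\kappa$ and $|N_G[X_\alpha]|\geq\kappa$ for every $\alpha<\cf(\kappa)$. Indeed, Observation \ref{Hbuild} then yields a subgraph $H$ of type $\halff\kappa$ with main class $X$; and since $G$ is bipartite on $(A,B)$ with $X\subseteq A$, every common neighbourhood $N_G[X_\alpha]$ is contained in $B$, so the second class of $H$ lands in $B$ and is automatically disjoint from $X$. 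Thus $H$ is a genuine copy of the graph $\halff\kappa$ with main class $X\subseteq A$, as required.

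To find such an $X$ I would work with the $\mc A$-boxes. Using Observation \ref{cofinobs} I first normalise so that each $\lambda_i=\cf(\lambda_i)$ is regular and $\leq\kappa$. For $a\in A$ let $\vec f(a)=(f_i(a))_{i\in I}$ be its profile, where $f_i(a)=\min\{\alpha<\lambda_i:a\in A^i_\alpha\}$; then $A\cap[\vec\gamma]_{\mc A}=\{a\in A:\vec f(a)\leq\vec\gamma\}$ and $A=\bigcup_{\vec\gamma\in\Pi\vec\lambda}\bigl(A\cap[\vec\gamma]_{\mc A}\bigr)$. The crucial point is that condition (3) gives $|N_G[[\vec\gamma]_{\mc A}]|\geq\kappa$ for \emph{every} box, so any subset of a box has common neighbourhood of size $\geq\kappa$. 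Hence it is enough to find a $\subseteq$-increasing chain of boxes $\bigl([\vec\gamma_\alpha]_{\mc A}\bigr)_{\alpha<\cf(\kappa)}$ with $|A\cap[\vec\gamma_\alpha]_{\mc A}|<\kappa$ for each $\alpha$ but $\bigl|A\cap\bigcup_{\alpha<\cf(\kappa)}[\vec\gamma_\alpha]_{\mc A}\bigr|=\kappa$; setting $X_\alpha=A\cap[\vec\gamma_\alpha]_{\mc A}$ and $X=\bigcup_\alpha X_\alpha$ finishes the reduction.

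Now I split into two cases. If some box has $|A\cap[\vec\gamma]_{\mc A}|=\kappa$, then $X=A\cap[\vec\gamma]_{\mc A}$ works at once: write it as any $\subseteq$-increasing union of pieces of size $<\kappa$, each of which sits inside the box and so has common neighbourhood of size $\geq\kappa$. Otherwise every box meets $A$ in fewer than $\kappa$ points, and here lies the main obstacle: the poset $\Pi\vec\lambda$ need not carry a cofinal chain at all (a finite product of regular cardinals of differing cofinalities does not), so a naive diagonal chain of boxes may fail to accumulate $\kappa$-many points of $A$. The key to getting past this is a cofinality claim: \emph{every} coordinate $i$ with $\lambda_i\neq\cf(\kappa)$ admits a level $\beta_i^\ast<\lambda_i$ for which the slab $\{a\in A:f_i(a)\leq\beta_i^\ast\}$ still has size $\kappa$. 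For if not, the sets $\{a\in A:f_i(a)\leq\beta\}$ are increasing in $\beta<\lambda_i$, each of size $<\kappa$, with union $A$ of size $\kappa$; this forces their sizes to be cofinal in $\kappa$ and the thresholds to be cofinal in $\lambda_i$, which (as $\lambda_i$ is regular) forces $\lambda_i=\cf(\kappa)$, a contradiction.

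Granting the claim, I would freeze, one coordinate at a time — there are only finitely many — every coordinate with $\lambda_i\neq\cf(\kappa)$ at such a level $\beta_i^\ast$, each time passing to a slab of $A$ of size $\kappa$ and repeating the argument; this is legitimate because the ``all boxes small'' hypothesis persists on every slab, and the cofinality claim only uses that the slab has size $\kappa$. After these finitely many steps I am left with a size-$\kappa$ slab $S$ and a set $J$ of coordinates each satisfying $\lambda_j=\cf(\kappa)$; moreover $J\neq\emptyset$, since otherwise $S$ would itself be a single box of size $\kappa$, contradicting the present case. On the frozen coordinates I keep the levels $\beta_i^\ast$ and on $J$ I take the diagonal: let $\vec\gamma_\nu$ have entry $\beta_i^\ast$ for $i\notin J$ and entry $\nu$ for $j\in J$, for $\nu<\cf(\kappa)$. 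This is an honest $\subseteq$-increasing chain of boxes inside $\Pi\vec\lambda$ (the diagonal entries stay below $\lambda_j=\cf(\kappa)$), each box meets $A$ in $<\kappa$ points, and the union of its traces is exactly $S$, because $f_j(a)<\lambda_j=\cf(\kappa)$ holds automatically for $a\in S$ and $j\in J$. Hence $|X|=|S|=\kappa$ and Observation \ref{Hbuild} completes the proof. The delicate part throughout is this last step — extracting a genuine increasing chain of boxes from several covers of incompatible regular lengths — and the cofinality claim is exactly what makes it go through.
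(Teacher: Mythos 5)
Your proof is correct, and while it shares the paper's overall skeleton (normalise via Observation \ref{cofinobs}, freeze some coordinates at levels where the slab keeps size $\kappa$, diagonalise over the remaining coordinates, and finish with Observation \ref{Hbuild}), the key technical step is handled genuinely differently. The paper only records that $\cf(\kappa)\leq\lambda_i$ for each unfrozen coordinate, and then builds the increasing chain of boxes by a delicate cardinality count split into two cases: for limit $\kappa$ it peels off one cardinal successor per coordinate (demanding size $\kappa_\zeta^{+m-k}$ at stage $k$), and for $\kappa=\mu^+$ it uses the fact that a set of size $\mu$ is not an increasing union of $\mu^+$ sets of size $<\mu$. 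Your cofinality claim sharpens the paper's observation to $\lambda_i=\cf(\kappa)$ for every unfrozen coordinate, after which the literal diagonal $\vec\gamma_\nu$ works uniformly with no case split; this is a real simplification. I checked the claim: if every slab has size $<\kappa$ then (since $\lambda_i$ is regular, so either $\lambda_i<\kappa$ or $\lambda_i=\kappa=\cf(\kappa)$, the latter excluded) the slab sizes are unbounded in $\kappa$, the set of jump points is closed unbounded in the regular $\lambda_i$, hence the distinct sizes form a strictly increasing sequence of order type exactly $\lambda_i$ cofinal in $\kappa$, forcing $\cf(\kappa)=\lambda_i$. Your write-up of this step is compressed --- you should spell out why the order type of the value set is exactly $\lambda_i$ and not merely at least $\lambda_i$ (a family of $\lambda_i$ disjoint nonempty intervals of a regular $\lambda_i$ is already cofinal, so nothing lies beyond it) --- but the argument is sound. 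Two further points in your favour: the iterative freezing is legitimate exactly as you say, since both the ``all boxes small'' hypothesis and the claim relativise to any size-$\kappa$ slab; and your remark that the second class of $H$ automatically lands in $B$ and is therefore disjoint from $X$ addresses the distinction between ``a copy of $\halff\kappa$'' and ``a graph of type $\halff\kappa$'' more explicitly than the paper does.
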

\begin{proof} We can suppose that  $\lambda_i=\cf(\lambda_i)\leq \kappa$ for all $i\in I$ by Observation \ref{cofinobs}. Find a maximal $J\subseteq I$ such that there is $\alpha_j<\lambda_j$ for $j\in J$ so that $X_{-1}=\bigcap_{j\in J}A^j_{\alpha_j}$ has size $\kappa$. Note that $J$ might be empty in which case $X_{-1}=A$. Note that $X_{-1}=\bigcup\{X_{-1}\cap A^i_\alpha:\alpha<\lambda_i\}$ is a union of sets of size $<\kappa$ and hence $\cf(\kappa)\leq \lambda_i=\cf(\lambda_i)$ for all $i\in I\setm J$. Without loss of generality, $I\neq J$ otherwise $K_{\kappa,\kappa}$ embeds into $G$. Let us fix $J$, $\alpha_j$ and $A^j_{\alpha_j}$ for $j\in J$ as above.

First, suppose that $\kappa$ is a limit cardinal and take a strictly increasing cofinal sequence $(\kappa_\xi)_{\xi<\cf(\kappa)}$ in $\kappa$. Now inductively find $(\alpha_i(\xi))_{i\in I\setm J}\in \Pi_{i\in I\setm J}\lambda_i$ for $\xi<\cf(\kappa)$ so that $(\alpha_i(\xi))_{i\in I\setm J}\leq (\alpha_i(\zeta))_{i\in I\setm J}$ and $$X_\xi=X_{-1}\cap \bigcap_{i\in I\setm J}A^i_{\alpha_i(\xi)} \text{ has size at least } \kappa_\xi$$ for all $\xi\leq \zeta<\cf(\kappa)$. 

Suppose $(\alpha_i(\xi))_{i\in I\setm J}$ is constructed for $\xi<\zeta$. List $I\setm J$ as $\{i_0, ..., i_m\}$. First, find $\alpha_{i_0}(\zeta)\in \lambda_{i_0}\setm \sup\{\alpha_{i_0}(\xi):\xi<\zeta\}$ such that $$|X_{-1}\cap A^{i_0}_{\alpha_{i_0}(\zeta)}|\geq \kappa_\zeta^{+m}.$$

If we have $\alpha_{i_0}(\zeta),..., \alpha_{i_{k-1}}(\zeta)$ for some $k<m$ so that $$|X_{-1}\cap \bigcap_{l< k} A^{i_l}_{\alpha_{i_l}(\zeta)}|\geq \kappa_\zeta^{+m-k}$$ then find $\alpha_{i_k}(\zeta)\in \lambda_{i_k}\setm \sup\{\alpha_{i_k}(\xi):\xi<\zeta\}$ so that $$|X_{-1}\cap \bigcap_{l\leq k} A^{i_l}_{\alpha_{i_l}(\zeta)}|\geq \kappa_\zeta^{+m-k-1}.$$ This finishes the inductive construction.

Let $X=\bigcup\{X_\xi:\xi<\cf(\kappa)\}$ and note that $X_\xi$ has size $<\kappa$ and $|N[X_\xi]|=\kappa$ since $X_\xi$ is an $\mc A$-box for each $\xi<\cf(\kappa)$. Observation \ref{Hbuild} can be applied now to find a copy $H$ of $\halff \kappa$ with main class $X$.

If $\kappa=\mu^+$ we inductively find $(\alpha_i(\xi))_{i\in I\setm J}\in \Pi_{i\in I\setm J}\lambda_i$ for $\xi<\cf(\kappa)$ so that $$X_\xi=X_{-1}\cap \bigcap_{i\in I\setm J}A^i_{\alpha_i(\xi)} \text{ has size } \mu$$ and $X_\xi \subsetneq X_\zeta$ for all $\xi\leq \zeta<\kappa$. First, note that $\lambda_i=\kappa$ for all $i\in I\setm J$. As before, suppose $(\alpha_i(\xi))_{i\in I\setm J}$ is constructed for $\xi<\zeta$ and list $I\setm J$ as $\{i_0, ..., i_m\}$. Fix $x\in X_{-1}\setm \bigcup\{X_\xi:\xi<\zeta\}$. Suppose we have  $\alpha_{i_0}(\zeta),..., \alpha_{i_{k-1}}(\zeta)$ for some $k<m$ so that $$|X_{-1}\cap \bigcap_{l< k} A^{i_l}_{\alpha_{i_l}(\zeta)}|=\mu$$ and $x\in X_{-1}\cap \bigcap_{l< k} A^{i_l}_{\alpha_{i_l}(\zeta)}$. We claim that there is $\alpha_{i_k}(\zeta)\in \kappa\setm \sup\{\alpha_{i_k}(\xi):\xi<\zeta\}$ so that  $$|X_{-1}\cap \bigcap_{l\leq k} A^{i_l}_{\alpha_{i_l}(\zeta)}|=\mu$$ and $x\in X_{-1}\cap \bigcap_{l\leq k} A^{i_l}_{\alpha_{i_l}(\zeta)}$. Indeed, we cannot write a set of size $\mu$ as an increasing union of ${\mu^+}$ sets of size $<\mu$. 

Finally, let $X=\bigcup\{X_\xi:\xi<\kappa\}$. As before, $X_\xi$ has size $<\kappa$ and $|N[X_\xi]|=\kappa$ for each $\xi<\kappa$. Hence Observation \ref{Hbuild} can be applied to find a copy $H$ of $\halff \kappa$ with main class $X$.
\end{proof}

The next lemma shows that the property of being ``$\cent {\mc A} \kappa$ for some $\mc A$'' is inherited by subgraphs in a strong sense.

\begin{lemma}\label{cover} Suppose that $\gr$ is a bipartite graph on classes $A,B$ and $A$ is $\cent {\mc A} \kappa$ for some $\mc A$. Suppose that $H$ is a subgraph of $G$ such that $$|N_G[x]\setm N_H[x]|<\kappa$$ for all $x\in X=V(H)\cap A$. Then there is a finite $\mc A'\supseteq \mc A\uhp X$ so that $X$ is $\cent {\mc {A'}} \kappa$ in $H$.
\end{lemma}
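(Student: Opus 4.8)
The plan is to reduce the whole statement to a bookkeeping problem about the neighbours that $H$ loses. For each $x\in X$ I would set $D_x=N_G(x)\setm N_H(x)$; since $H$ is a subgraph of $G$ we have $N_H(x)\subseteq N_G(x)$, so the hypothesis $|N_G[x]\setm N_H[x]|<\kappa$ says exactly that $|D_x|<\kappa$. The key identity to record is that for every $Z\subseteq X$,
$$N_H[Z]=\bigcap_{x\in Z}\bigl(N_G(x)\setm D_x\bigr)=N_G[Z]\setm\bigcup_{x\in Z}D_x .$$
Any family of the form $\mc A'=\mc A\uhp X\cup\{\text{finitely many increasing covers of }X\}$ has the feature that each of its boxes $Z'$ selects one level from every family of $\mc A\uhp X$, so $Z'\subseteq X\cap[\vec\alpha]_{\mc A}\subseteq[\vec\alpha]_{\mc A}$ for some $\vec\alpha\in\Pi\vec\lambda$, whence $N_G[Z']\supseteq N_G[[\vec\alpha]_{\mc A}]$ has size $\geq\kappa$. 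Since deleting a set of size $<\kappa$ from a set of size $\geq\kappa$ leaves a set of size $\geq\kappa$, the displayed identity reduces everything to choosing the extra covers so that $\bigl|\bigcup_{x\in Z'}D_x\bigr|<\kappa$ for every box $Z'$ of $\mc A'$.

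To arrange this I would use $\bigl|\bigcup_{x\in Z'}D_x\bigr|\leq|Z'|\cdot\sup_{x\in Z'}|D_x|$ together with the fact that the product of two cardinals $<\kappa$ is again $<\kappa$; so it suffices to force $|Z'|<\kappa$ and $\sup_{x\in Z'}|D_x|<\kappa$ on every box. For the first I would add the cover of $X$ by the initial segments $(C_\gamma)_{\gamma<|X|}$ of a well ordering of order type $|X|\leq\kappa$: it is $\subseteq$-increasing, covers $X$, and all of its levels have size $<\kappa$, so each box sits inside some $C_\gamma$ and therefore has size $<\kappa$. For the second, if $\kappa$ is regular nothing more is needed, since $|Z'|<\kappa$ already forces $\sup_{x\in Z'}|D_x|<\kappa$ by regularity; if $\kappa$ is singular I would fix an increasing cofinal sequence of cardinals $(\kappa_\xi)_{\xi<\cf(\kappa)}$ in $\kappa$ and add the cover $E_\xi=\{x\in X:|D_x|<\kappa_\xi\}$, which is increasing and covers $X$ because $|D_x|<\kappa=\sup_\xi\kappa_\xi$, and which forces $\sup_{x\in Z'}|D_x|\leq\kappa_\xi<\kappa$ on any box contained in $E_\xi$.

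Finally I would take $\mc A'$ to be $\mc A\uhp X$ together with $(C_\gamma)_{\gamma<|X|}$ and, when $\kappa$ is singular, $(E_\xi)_{\xi<\cf(\kappa)}$; this family is finite and contains $\mc A\uhp X$, and every member is $\subseteq$-increasing and covers $X$ (for the members of $\mc A\uhp X$ this is inherited from $A\subseteq\bigcup_\alpha A^i_\alpha$). For an arbitrary box $Z'$ the two preceding paragraphs give $|Z'|<\kappa$ and $\sup_{x\in Z'}|D_x|<\kappa$, hence $\bigl|\bigcup_{x\in Z'}D_x\bigr|<\kappa$ and therefore $|N_H[Z']|\geq\kappa$; this is exactly the assertion that $X$ is $\cent{\mc A'}{\kappa}$ in $H$. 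I expect the only genuine obstacle to be the singular case, where shrinking the boxes below $\kappa$ does \emph{not} by itself bound $\bigl|\bigcup_{x\in Z'}D_x\bigr|$, because the sizes $|D_x|$ may be cofinal in $\kappa$; the stratifying cover $(E_\xi)$ is precisely what repairs this, while for regular (in particular successor) $\kappa$ it is unnecessary.
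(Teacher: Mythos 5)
Your proposal is correct and follows essentially the same route as the paper's proof: augment $\mc A\uhp X$ with an increasing cover by sets of size $<\kappa$ to bound the boxes, and (when needed) a cover stratifying the defect sets $D_x=N_G[x]\setminus N_H[x]$ by size, then conclude via $|N_H[Z]|\geq |N_G[Z]|$ minus a union of size $|Z|\cdot\sup_{x\in Z}|D_x|<\kappa$. The only cosmetic difference is that the paper adds the stratifying cover whenever $\kappa$ is a limit cardinal while you add it only when $\kappa$ is singular, correctly observing that regularity makes it superfluous otherwise.
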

\begin{proof} We define $\mc A'$ by extending $\mc A\uhp X$ with at most two new covers depending on the size of $X$ and on $\kappa$ being a limit or successor cardinal.

 First, if $X$ happens to have size $\kappa$ then let $(X^0_\alpha)_{\alpha<\cf(\kappa)}$ be an increasing sequence of subsets of $X$ of size less than $\kappa$ with union $X$. We put $(X^0_\alpha)_{\alpha<\cf(\kappa)}$ into $\mc A'$ if $|X|=\kappa$.

Second, if $\kappa$ is a limit cardinal then let us take a strictly increasing cofinal sequence $(\kappa_\alpha)_{\alpha<\cf(\kappa)}$ in $\kappa$ and let $$X^1_\alpha=\{x\in X:|N_G[x]\setm N_H[x]|\leq\kappa_\alpha \}$$ for $\alpha<\cf(\kappa)$. We put $(X^1_\alpha)_{\alpha<\cf(\kappa)}$ into $\mc A'$ as well if $\kappa$ is a limit.

Let us show that $\mc A'$ works. If $Z\subseteq X$ is an $\mc A'$-box then $|Z|<\kappa$ and there is $\lambda<\kappa$ such that $|N_G[x]\setm N_H[x]|\leq \lambda$ for all $x\in Z$. In particular $$|\bigcup_{x\in Z} N_G[x]\setm N_H[x]|\leq |Z|\cdot \lambda <\kappa.$$ Also, $|N_G[Z]|=\kappa$ as $Z$ is contained in an $\mc A$-box. Hence the set $$N_H[Z]=N_G[Z]\setm \bigl(\bigcup_{x\in Z} N_G[x]\setm N_H[x]\bigr)$$ has size $\kappa$.

\end{proof}

Lemma \ref{cover} is the reason we work with this new class of bipartite graphs instead of $\halff \kappa$. Note that if $X$ is a subset of the main class of $\halff \kappa$ then $X$ is not necessarily covered by a subgraph isomorphic to $\halff \lambda$ for some $\lambda\leq \kappa$.

The next lemma is our final preparation to the proof of Theorem \ref{centeredcover}.

\begin{lemma}\label{indedges} Suppose that $\gr$ is a bipartite graph on classes $A,B$ where $|A|=\kappa$ and $A$ is $\cent {\mc A} \kappa$ for some $\mc A$. Let $c$ be a finite edge colouring of $G$ and suppose that $G_0$ is a subgraph of $G$ with classes $V(G_0) \cap A=A_0$ and $V(G_0)\cap B=B_0$. If 
\begin{enumerate}
	\item $|A_0|=\kappa$ and $B_0$ is  $\den {\mc A} \kappa$ in $G$, and
	\item $|\ran(c\uhp E(G_0))|$ is minimal among subgraphs $G_0$ of $G$ satisfying (1)
	\end{enumerate}
	then
	\begin{enumerate}
	\setcounter{enumi}{2}
	\item for every $i\in \ran(c\uhp E(G_0))$ and every $X\in [A_0]^\kappa$ there is a set of $\kappa$ independent edges $\{\{x_\alpha,y_\alpha\}:\alpha<\kappa\}\subseteq c^{-1}(i)$ so that $\{x_\alpha:\alpha<\kappa\}\subseteq X$ and $\{y_\alpha:\alpha<\kappa\}$ is $\den {\mc A} \kappa$ in $G$.
\end{enumerate} 
\end{lemma}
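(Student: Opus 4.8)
The plan is to prove the implication $(1)\wedge(2)\Rightarrow(3)$ by playing a failure of $(3)$ against the minimality in $(2)$: assuming $(1)$ and that $(3)$ fails for some colour $i\in\ran(c\uhp E(G_0))$ and some $X\in[A_0]^\kappa$, I would construct a subgraph $G_0'$ of $G$ still satisfying $(1)$ but with $|\ran(c\uhp E(G_0'))|<|\ran(c\uhp E(G_0))|$, contradicting $(2)$. By Observation \ref{cofinobs} I may assume each cover in $\mc A$ has length $\lambda_i=\cf(\lambda_i)\leq\kappa$, so that the number of $\mc A$-boxes is at most $\kappa^{|I|}=\kappa$.

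The engine is a dichotomy applied boxwise. Call a box $[\vec\beta]_{\mc A}$ \emph{$i$-good from $X$} if for every $X^*\in[X]^{<\kappa}$ the set $\{y\in N_G[[\vec\beta]_{\mc A}]:N_G(y,i)\cap(X\setm X^*)\neq\emptyset\}$ has size $\kappa$. If every box is $i$-good from $X$, then I would build the required $\kappa$ independent $i$-edges by a transfinite recursion of length $\kappa$, running through a bookkeeping list of requirements (box, ordinal $<\kappa$); there are only $\kappa$ of these. At each stage the set of used vertices has size $<\kappa$, so taking $X^*$ to be the already-used part of $X$, $i$-goodness furnishes $\kappa$ candidates $y\in N_G[[\vec\beta]_{\mc A}]$ having an $i$-neighbour in the untouched part of $X$, and one selects a fresh such pair. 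The resulting edges lie in $c^{-1}(i)$ and their $B$-endpoints meet every $N_G[[\vec\beta]_{\mc A}]$ in $\kappa$ points, i.e. form a $\den{\mc A}{\kappa}$ set, which is exactly $(3)$ and contradicts its failure.

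Hence some box $[\vec\beta_0]_{\mc A}$ is \emph{not} $i$-good from $X$: there are $X^*\in[X]^{<\kappa}$ and $Z\in[B]^{<\kappa}$ so that no $y\in N_G[[\vec\beta_0]_{\mc A}]\setm Z$ has an $i$-edge to $X\setm X^*$. Here I would exploit that the box-neighbourhoods are monotone and downward directed (Observation \ref{firstcentobs}): given any box $[\vec\gamma]_{\mc A}$ and a fixed $x_0\in X\setm X^*$ one finds $\vec\delta\geq\max\{\vec\gamma,\vec\beta_0\}$ with $x_0\in[\vec\delta]_{\mc A}$, whence the $\geq\kappa$ vertices of $N_G[[\vec\delta]_{\mc A}]\setm Z$ are all joined to $x_0$ by a non-$i$ edge and lie inside $N_G[[\vec\gamma]_{\mc A}]$. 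This shows that the bipartite graph spanned by $X\setm X^*$ and $\bigcup\{N_G[[\vec\delta]_{\mc A}]\setm Z:\vec\delta\geq\vec\beta_0\}$, once all $i$-edges are deleted, still carries a $\den{\mc A}{\kappa}$ class on the $B$-side while keeping a $\kappa$-sized $A$-side; this is the candidate $G_0'$, and it omits colour $i$.

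The step I expect to be the main obstacle is this final reduction: to contradict $(2)$ the graph $G_0'$ must use strictly fewer colours than $G_0$, i.e. its colour set must be \emph{contained in} $\ran(c\uhp E(G_0))$, not merely omit $i$. Deleting $i$-edges from the $G$-complete bipartite pieces above produces edges whose colours may fall outside $\ran(c\uhp E(G_0))$, so the naive $G_0'$ need not beat the minimum. Resolving this forces one to run the whole dichotomy inside $G_0$ (matching with $E(G_0)$-edges, and certifying reachability and density through $B_0$ rather than through the $G$-complete neighbourhoods); the delicate point is that passing to $G_0$-edges can destroy the reachability used to certify $\den{\mc A}{\kappa}$. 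I would address it by keeping the $A$-side of $G_0'$ inside $X\setm X^*$ and arguing, via a secondary pressing-down/largeness bookkeeping, that $\kappa$-many $B_0$-vertices in each $N_G[[\vec\gamma]_{\mc A}]$ retain a (necessarily non-$i$) $G_0$-edge into $X\setm X^*$. Reconciling this $G$-level density with the $G_0$-level colour budget is the crux of the whole argument.
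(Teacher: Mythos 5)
Your overall strategy --- greedily build the independent system of $i$-coloured edges so as to meet every box-neighbourhood $\kappa$ often, and convert a failure into a subgraph showing that $|\ran(c\uhp E(G_0))|$ was not minimal --- is the same as the paper's, and the first half of your argument (the bookkeeping construction under the hypothesis that every box is ``$i$-good from $X$'') is fine. But the second half has a genuine gap, and you have located it yourself: your candidate $G_0'$ is spanned by \emph{$G$-edges} between $X\setm X^*$ and $N_G[[\vec\beta_0]_{\mc A}]\setm Z$, so although it omits colour $i$ it may pick up colours outside $\ran(c\uhp E(G_0))$, and minimality is not contradicted. The ``secondary pressing-down/largeness bookkeeping'' you invoke to repair this is never carried out, so the proof does not close.

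The repair is much simpler than what you sketch, and the difficulty you call ``the crux'' is not actually there. Run the greedy construction with $y_\xi\in B_0\cap N_G[[\vec\alpha(\xi)]_{\mc A}]$ from the start, enumerating $\Pi\vec\lambda$ so that each box recurs $\kappa$ times. If the construction halts at some stage $\zeta<\kappa$, set $A_1=X\setm\{x_\xi:\xi<\zeta\}$ and $B_1=B_0\cap N_G[[\vec\alpha(\zeta)]_{\mc A}]\setm\{y_\xi:\xi<\zeta\}$, and take $G_1=G_0\uhp(A_1\cup B_1)$: this is an \emph{induced subgraph of $G_0$}, so $\ran(c\uhp E(G_1))\subseteq\ran(c\uhp E(G_0))\setm\{i\}$ automatically, since no $G$-edge (a fortiori no $G_0$-edge) between $A_1$ and $B_1$ has colour $i$. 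Your worry that ``passing to $G_0$-edges can destroy the reachability used to certify'' density rests on a misreading: being $\den {\mc A} \kappa$ is a property of the vertex set computed in the ambient graph $G$ (it says $|B_1\cap N_G[[\vec\gamma]_{\mc A}]|\geq\kappa$ for every box), and condition (1) asks for exactly this; no edges of $G_0$ enter into its verification. Since $B_0$ is $\den {\mc A} \kappa$ in $G$, Observation \ref{denseobs} gives that $B_0\cap N_G[[\vec\alpha(\zeta)]_{\mc A}]$ is as well, and deleting the $<\kappa$ used vertices preserves this. Hence $G_1$ satisfies (1) with strictly fewer colours, contradicting (2). In short, the missing idea is to witness the failure \emph{inside $B_0$} and to take the new subgraph as a restriction of $G_0$ rather than of $G$; once this is done, no pressing-down argument is needed.
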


\begin{proof} Suppose $\mc A=\{(A^i_\alpha)_{\alpha<\lambda_i}:i\in I\}$ and $\vec \lambda = (\lambda_i)_{i\in I}$ as before. Again, we can suppose that $\Pi \vec \lambda$ has size $\leq \kappa$ by Observation \ref{cofinobs}. Take a subgraph $G_0$ of $G$ which satisfies (1) and suppose that (3) fails; we will show that $|\ran(c\uhp E(G_0))|$ is not minimal i.e. (2) fails. 

Let $i\in \ran(c\uhp E(G_0))$ and $X\in [A_0]^\kappa$ witness that condition (3) fails.  Enumerate $\Pi \vec \lambda$ as $\{\vec \alpha(\xi):\xi<\kappa\}$ such that each $\vec \alpha \in  \Pi \vec \lambda$ appears $\kappa$ times. Start inductively building independent edges  $\{\{x_\xi,y_\xi\}:\xi<\zeta\}\subseteq c^{-1}(i)$ from $X$ so that $y_\xi\in B_0\cap N_G[[\vec \alpha(\xi)]_{\mc A}]$. There must be a $\zeta<\kappa$ such that we cannot pick $\{x_\zeta,y_\zeta\}$. That is, every edge from $X\setm \{x_\xi:\xi<\zeta\}$ to $B_0\cap N_G[[\vec \alpha(\zeta)]_{\mc A}]\setm \{y_\xi:\xi<\zeta\}$ is not coloured $i$. Let $A_1= X\setm \{x_\xi:\xi<\zeta\}$ and $B_1=B_0\cap N_G[[\vec \alpha(\zeta)]_{\mc A}]\setm \{y_\xi:\xi<\zeta\}$. It is easy to see that $G_1=G_0\uhp A_1\cup B_1$ satisfies (1); indeed, $A_1$ has size $\kappa$ and Observation \ref{denseobs} implies that $B_1$ is $\den {\mc A} \kappa$ in $G$. Finally, $i\notin \ran(c\uhp E(G_1))$ implies  $|\ran(c\uhp E(G_1))|<|\ran(c\uhp E(G_0))|$ and we are done.

\end{proof}

\begin{proof}[Proof of Theorem \ref{centeredcover}]
We prove the statement by induction on $r\geq 1$ for every $\gr$, $\mc A$ and $c$ simultaneously. 

First, suppose $r=1$. Lemma \ref{embed} implies that we can find a copy $H$ of $\halff \kappa$ in $G$ with main class $X\subseteq A$. Hence, by Theorem \ref{halff}, there is a path $P$ of size $\kappa$ which is concentrated on $X$. As $A$ is $\sat \kappa$ (by Observation \ref{firstcentobs}) we can cover $A$ by a single path in $G$ using Lemma \ref{lego}.

Now, suppose we proved the statement for $r-1$ and fix $\gr$, $\mc A$ and an $r$-edge colouring $c$. We will show that there is a colour $i<r$ and a path $P$ of colour $i$ in $G$ such that $A\setm P$ is one class of a bipartite subgraph $G_1$ of $G$ so that 
\begin{enumerate}[(i)]
	\item $V(G_1)\cap P=\emptyset$,
	\item $i\notin \ran(c\uhp E(G_1))$,
	\item $A\setm P$ is $\cent {\mc {A'}} \kappa$ in $G_1$ for some finite $\mc A' \supseteq \mc A$.
\end{enumerate} 
Once we find such a path $P$ and subgraph $G_1$, applying the inductive hypothesis finishes the proof.

First, take a subgraph $G_0$ of $G$ with classes $V(G_0) \cap A=A_0$ and $V(G_0)\cap B=B_0$ such that
\begin{enumerate}
	\item $|A_0|=\kappa$ and $B_0$ is  $\den {\mc A} \kappa$ in $G$, and
	\item $|\ran(c\uhp E(G_0))|$ is minimal among subgraphs of $G$ satisfying (1).
	\end{enumerate}
	
	Find a partition of $B_0$ into $B^0_0$ and $B^1_0$ so that both sets are $\den {\mc A} \kappa$ in $G$. Let $G^l_0=G_0\uhp (A_0\cup B^l_0)$ for $l<2$ and note that $\ran(c\uhp E(G^1_0))=\ran(c\uhp E(G_0))$ by (2). Hence, by Lemma \ref{indedges}, for every $i\in \ran(c\uhp E(G_0))$ and every $X\in [A_0]^\kappa$ there is a set of $\kappa$ independent edges $\{\{x_\alpha,y_\alpha\}:\alpha<\kappa\}\subseteq c^{-1}(i)$ so that $\{x_\alpha:\alpha<\kappa\}\subseteq X$ and $\{y_\alpha:\alpha<\kappa\}\subseteq B^1_0$ is $\den {\mc A} \kappa$ in $G$.

 Now, embed a copy $H$ of $\halff \kappa$ in $G^0_0$ using Lemma \ref{embed}. By Theorem \ref{halff}, we can find  $i<r$ and a set $X$ in the main class of $H$ which satisfies all three conditions of Lemma \ref{3line} in colour $i$. By (2), there is a set of $\kappa$ independent edges $\{\{x_\alpha,y_\alpha\}:\alpha<\kappa\}\subseteq c^{-1}(i)$ in $G^1_0$ so that $\{x_\alpha:\alpha<\kappa\}\subseteq X$ and $Y=\{y_\alpha:\alpha<\kappa\}\subseteq B^1_0$ is $\den {\mc A} \kappa$ in $G$.

\begin{figure}[H]%
\centering

\psscalebox{0.8 0.8} 
{
\begin{pspicture}(1,-2.485)(12.99,2.485)
\definecolor{colour0}{rgb}{0.8,0.8,0.8}
\psellipse[linecolor=black, linewidth=0.04, fillstyle=solid,fillcolor=colour0, dimen=outer](8.68,0.995)(1.7,0.3)
\psline[linecolor=black, linewidth=0.04](12.58,-0.465)(1.0,-0.465)(1.0,-2.465)(12.58,-2.465)
\psframe[linecolor=black, linewidth=0.04, dimen=outer](5.4,1.535)(2.4,0.535)
\psframe[linecolor=black, linewidth=0.04, dimen=outer](11.8,1.535)(6.2,0.535)
\psframe[linecolor=black, linewidth=0.04, dimen=outer](9.2,-0.865)(2.2,-2.065)
\rput[bl](0.0,-1.465){\Large{$A$}}
\rput[bl](9.84,-1.605){\Large{$A_0$}}
\rput[bl](1.36,0.955){\Large{$B^0_0$}}
\rput[bl](12.4,0.935){\Large{$B^1_0$}}
\psellipse[linecolor=black, linewidth=0.04, fillstyle=solid,fillcolor=colour0, dimen=outer](5.1,-1.465)(2.5,0.4)
\psline[linecolor=black, linewidth=0.04, linestyle=dashed, dash=0.17638889cm 0.10583334cm](3.2,-1.465)(3.0,0.935)(3.8,-1.465)(3.6,0.935)(4.6,-1.465)(4.2,0.935)(6.0,-1.465)(5.0,0.935)
\psdots[linecolor=black, dotsize=0.16](5.2,-1.465)
\psdots[linecolor=black, dotsize=0.16](5.8,-1.465)
\psdots[linecolor=black, dotsize=0.16](6.4,-1.465)
\psdots[linecolor=black, dotsize=0.16](7.6,0.935)
\psdots[linecolor=black, dotsize=0.16](8.2,0.935)
\psdots[linecolor=black, dotsize=0.16](9.0,0.935)
\psline[linecolor=black, linewidth=0.04](5.2,-1.465)(7.6,0.935)
\psline[linecolor=black, linewidth=0.04](5.8,-1.465)(8.2,0.935)
\psline[linecolor=black, linewidth=0.04](6.4,-1.465)(9.0,0.935)
\rput[bl](8.0,-1.665){\Large{$X$}}
\rput[bl](10.74,0.855){\Large{$Y$}}
\psbezier[linecolor=black, linewidth=0.04, linestyle=dotted, dotsep=0.10583334cm, arrowsize=0.05291666666666667cm 2.0,arrowlength=1.4,arrowinset=0.0]{->}(8.24,2.315)(7.0764923,2.545615)(6.315542,2.393765)(5.96,2.0086586)(5.604458,1.6235522)(5.651494,0.7601294)(6.26,-0.025)
\rput[bl](8.54,1.995){\large{edges of colour $i$}}
\psbezier[linecolor=black, linewidth=0.04, linestyle=dotted, dotsep=0.10583334cm, arrowsize=0.05291666666666667cm 2.0,arrowlength=1.4,arrowinset=0.0]{->}(1.98,2.315)(1.096492,2.265615)(0.655542,1.9337649)(0.54,1.5086585)(0.42445797,1.0835522)(0.9514942,0.12012937)(2.6,0.055)
\rput[bl](2.3,2.075){\large{copy of $H_{\kappa,\kappa}$}}
\end{pspicture}
}
\caption{Preparing the cover of $A$.}
\label{coverfig}
\end{figure}
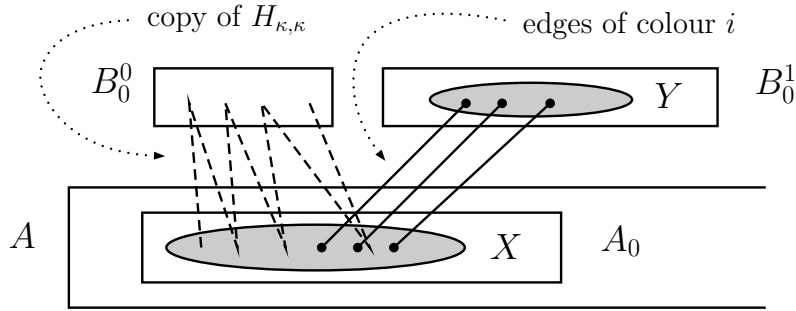

 Let $$\bar X=X\cup \{x\in A:|N_G(x,i)\cap Y|=\kappa\}.$$ Note that $\bar X$ is still $\sat \kappa$ in colour $i$ in $G$.

\begin{claim} There is $Y_1\in[Y]^\kappa$ so that $Y_1$ is $\den {\mc A}\kappa$ in $G$ and $$|N_G(x,i)\cap Y\setm Y_1|=\kappa$$ for all $x\in \bar X\setm X$.
\end{claim}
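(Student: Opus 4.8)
The plan is to build $Y_1$ by a single transfinite recursion of length $\kappa$ that feeds the two competing demands simultaneously. The point to exploit is that both families of constraints are \emph{small} while each relevant target set is \emph{large}. By Observation \ref{cofinobs} we may assume $|\Pi \vec\lambda|\leq \kappa$, so there are at most $\kappa$ many $\mc A$-boxes; and since $\bar X\subseteq A$ with $|A|=\kappa$, there are at most $\kappa$ many vertices in $\bar X\setm X$. On the other hand, $Y\cap N_G[[\vec\alpha]_{\mc A}]$ has size $\kappa$ for every $\vec\alpha\in\Pi\vec\lambda$ because $Y$ is $\den{\mc A}\kappa$, and $N_G(x,i)\cap Y$ has size $\kappa$ for every $x\in\bar X\setm X$ by the definition of $\bar X$.

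Accordingly, I would list the requirements as a sequence $(R_\eta)_{\eta<\kappa}$ in which every $\mc A$-box $[\vec\alpha]_{\mc A}$ and every vertex $x\in\bar X\setm X$ occurs $\kappa$-often; this is possible since there are at most $\kappa$ requirements and $\kappa\cdot\kappa=\kappa$. I then recursively choose distinct points $w_\eta\in Y$ together with a label: if $R_\eta$ is a box $[\vec\alpha]_{\mc A}$, pick $w_\eta\in Y\cap N_G[[\vec\alpha]_{\mc A}]$ not chosen at any earlier stage and reserve it for $Y_1$; if $R_\eta$ is a vertex $x$, pick $w_\eta\in N_G(x,i)\cap Y$ not chosen at any earlier stage and reserve it for $W$. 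At stage $\eta$ only $|\eta|<\kappa$ points have been used (as $\kappa$ is a cardinal), while each target set has size $\kappa$, so a fresh point is always available. This availability step is the only place where largeness is used, and it is the crux of the argument; it is routine rather than a genuine obstacle, the one thing to be careful about being that the ``not chosen earlier'' clause keeps all the $w_\eta$ distinct so that the $Y_1$-assignments and $W$-assignments never conflict.

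Finally I would set $W=\{w_\eta:R_\eta\text{ is a vertex requirement}\}$ and $Y_1=Y\setm W$. Because each box $[\vec\alpha]_{\mc A}$ occurs $\kappa$-often and each such occurrence contributes a distinct point of $Y_1\cap N_G[[\vec\alpha]_{\mc A}]$, we get $|Y_1\cap N_G[[\vec\alpha]_{\mc A}]|=\kappa$ for all $\vec\alpha\in\Pi\vec\lambda$; hence $Y_1$ is $\den{\mc A}\kappa$, and in particular $|Y_1|=\kappa$, so $Y_1\in[Y]^\kappa$. Likewise each $x\in\bar X\setm X$ occurs $\kappa$-often and each occurrence contributes a distinct point of $W\cap N_G(x,i)$ with $W\subseteq Y\setm Y_1$, whence $|N_G(x,i)\cap(Y\setm Y_1)|=\kappa$ for every $x\in\bar X\setm X$, which is exactly what the claim demands.
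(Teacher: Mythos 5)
Your proposal is correct and is exactly the ``easy induction of length $\kappa$'' that the paper invokes without detail: a single transfinite recursion of length $\kappa$ in which every $\mc A$-box and every vertex of $\bar X\setm X$ is treated $\kappa$-often, using at each stage that the relevant target set has size $\kappa$ while only $<\kappa$ points have been consumed. The bookkeeping (reducing to $|\Pi\vec\lambda|\leq\kappa$ via Observation \ref{cofinobs}, keeping the chosen points distinct, and taking $Y_1=Y\setm W$) is all sound, so nothing further is needed.
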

\begin{proof} The proof goes by an easy induction of length $\kappa$.
\end{proof}

Note that $\bar X$ still satisfies all three condition of Lemma \ref{3line} in $V\setm Y_1$ and $|N_G(x,i)\cap Y_1|<\kappa$ for all $x\in A\setm \bar X$. Now find a path $P$ of colour $i$ in $V\setm Y_1$ which covers $\bar X$; this can be done by Lemma \ref{3line}. Note that $A\setm P$ is $\cent {\mc A\uhp A\setm P} \kappa$ in $G\uhp (A\setm P\cup Y_1)$ and the subgraph $$G_1=(A\setm P\cup Y_1, c^{-1}(r\setm\{i\}))$$

satisfies the assumptions of Lemma \ref{cover}. In particular, $A\setm P$ is $\cent {\mc {A'}} \kappa$ for some finite $\mc A'\supseteq \mc A\uhp A\setm P$ in $G_1$. This finishes the proof.

\end{proof}

\section{The main decomposition theorem} \label{seconddec}

At this point, it would be rather easy to show (using Theorem \ref{centeredcover}) that every $\kappa$-complete graph is covered by $2r$ (not necessarily disjoint) monochromatic paths. However, we prove the following much stronger theorem which is the main result of this paper:

\begin{theorem}\label{maindecomp} Suppose that $c$ is a finite-edge colouring of a $\kappa$-complete graph $G=(V,E)$. Then the vertices can be partitioned into disjoint monochromatic paths of different colours.
\end{theorem}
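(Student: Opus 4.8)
The plan is to reduce the theorem to Theorem~\ref{centeredcover} by a transfinite construction. First I would normalise the hypothesis: by Observation~\ref{complobs} we may assume $\kappa=|V|$, so that $G$ is $\kappa$-complete of size $\kappa$, and then argue by induction on $\kappa$. The base case $\kappa=\omega$ is the classical countable statement: since $N_G[F]$ is cofinite (hence infinite) for every finite $F\subseteq V$, Theorem~\ref{radostrong} applied to $A=V$ partitions $V$ into finitely many disjoint monochromatic paths of different colours. For $\kappa>\omega$ the goal is to build $r$ pairwise disjoint monochromatic paths $\{P_i:i<r\}$ of distinct colours whose union is $V$, constructing all of them \emph{simultaneously}, layer by layer.

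The engine is a nice $\kappa$-chain of elementary submodels $(M_\alpha)_{\alpha<\cf(\kappa)}$ covering $V$ with $G,c\in M_1$, giving the continuous increasing filtration $N_\alpha=V\cap M_\alpha$ and the layers $D_\alpha=N_{\alpha+1}\setm N_\alpha$ with $|D_\alpha|<\kappa$. Because $G$ is $\kappa$-complete, any two vertices have $\kappa$ common neighbours, so $V$ is $\sat\kappa$ and Observation~\ref{elem} applies to the layers. The fact I would exploit repeatedly is that each $v\in D_\alpha$ omits fewer than $\kappa$ vertices, hence has $\kappa$ neighbours in the \emph{future} reservoir $V\setm M_{\alpha+1}$; consequently any subset of $D_\alpha$ of size $<\kappa$ is $\cent{\mc A}{\kappa}$, for a suitable finite $\mc A$ built from $\kappa$-completeness exactly as in Observation~\ref{comptype} and Lemma~\ref{cover} (two covers absorb the singular case), in the almost-complete bipartite graph joining it to a reserved size-$\kappa$ piece of the future.

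The recursion step then runs as follows. At stage $\alpha$ I will have covered $N_\alpha$ by initial segments of the $P_i$; let $U_\alpha\subseteq D_\alpha$ be the part of the new layer not already used as a connecting vertex at earlier stages. I reserve a fresh $\den{\mc A}{\kappa}$ set $R_\alpha\subseteq V\setm M_{\alpha+1}$ disjoint from all previous reservations (possible since each stage consumes only $<\kappa$ future vertices, so by stage $\alpha$ fewer than $\kappa$ have been used; the nice-chain clauses handle the successor and singular cases), apply Theorem~\ref{centeredcover} to the bipartite graph on $(U_\alpha,R_\alpha)$ to cover $U_\alpha$ by disjoint monochromatic paths of different colours, and end-extend each ongoing $P_i$ by the colour-$i$ segment of this layer-cover, using the freedom in Lemma~\ref{lego} to prescribe first points and cofinal sets. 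At limit stages I take unions, arranging — exactly as in the limit steps of Lemma~\ref{3line} — that the previously covered vertices are $\prec$-cofinal below the limit, so that each $\bigcup_\alpha P_i^\alpha$ remains a genuine Rado path.

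The hard part is the colour-coherent merging across stages. Unlike Theorem~\ref{centeredcover}, which covers a single class and is free to choose which colours it uses, here each layer-cover must be attached colour by colour to the correct ongoing path: to extend $P_i$ I need its current last vertex to be joined in colour $i$ to the first vertex of the new colour-$i$ segment. To keep this possible I would maintain, as an invariant, for each colour $i$ a reservoir inside $R_\alpha$ in which the end of $P_i$ has $\kappa$ colour-$i$ neighbours — precisely the role played by the tracked $\den{\mc A}{\kappa}$ set $Y$ in the proof of Theorem~\ref{centeredcover} — so that the attachment edge always exists and the extension stays within the controlled use of the current layer. Managing these $r$ reservoirs at once, together with the bookkeeping that every reserved future vertex is itself eventually covered in its own layer (which is why one covers only the as-yet-unused part $U_\alpha$), is the technical heart; the threading of the $r$ paths through limit stages is then a multi-colour version of the single-path limit construction already carried out in Lemmas~\ref{3line} and~\ref{lego}.
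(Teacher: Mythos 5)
There is a genuine gap, and it sits exactly where you locate ``the technical heart.'' Your plan presupposes that each of the $r$ paths $P_i$ can be end-extended \emph{in its own colour} through every layer and every limit stage, which requires that the current end of $P_i$ always has $\kappa$ (or at least cofinally many) colour-$i$ neighbours in the future reservoir, and that at every $\prec$-limit of $P_i$ the earlier colour-$i$ neighbours are cofinal. Nothing guarantees either of these for a given colour: a colour may simply fail to support any path of size $\kappa$ at all, and the ``reservoir invariant'' you want to maintain for all $r$ colours simultaneously cannot in general be established. This is precisely the content of the dichotomy in Lemma \ref{ml1}: for each colour $i$, \emph{either} condition (3) of Lemma \ref{3line} holds in colour $i$, \emph{or} one must discard colour $i$ wholesale on a large subgraph and recurse with $r-1$ colours. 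Your proposal never engages with the second horn. A second, related problem is that Theorem \ref{centeredcover} is a black box: it does not let you prescribe which colours occur in the layer-cover, where the colour-$i$ segment starts, or that its first point is a colour-$i$ neighbour of the current end of $P_i$; the freedom of Lemma \ref{lego} to prescribe first points and cofinal sets is only available \emph{inside} a single colour class that is already known to be $\sat\kappa$ and to carry a concentrated path, which is exactly what is in question. Colour-$j$ segments produced at different stages need not be concatenable into one Rado path.

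The paper's proof is organised to avoid this simultaneity entirely. Claims \ref{2of3} and \ref{3of3} (resting on Lemmas \ref{ml1}, \ref{ml2} and Theorem \ref{halff}) isolate a \emph{single} colour $i$ and a \emph{maximal} $\kappa$-connected set $A$ in colour $i$, together with a set $Y\in[A]^\kappa$ such that $A\setminus Z$ remains a single colour-$i$ path for \emph{every} $Z\subseteq Y$ --- a robustness property that replaces your layer-by-layer threading. Maximality of $A$ then forces every $x\in V\setminus A$ to have fewer than $\kappa$ colour-$i$ neighbours in $Y$, so the bipartite graph from $V\setminus A$ to $Y$ using only the colours $\neq i$ is almost complete; Lemma \ref{cover} and Theorem \ref{centeredcover} cover $V\setminus A$ there by disjoint monochromatic paths of different colours, none of colour $i$, and the bite $Z\subseteq Y$ these paths take out of $A$ is absorbed by the robustness of Claim \ref{3of3}. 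So Theorem \ref{centeredcover} is invoked exactly once, on the complement of the one big colour class, rather than once per layer, and the colour-coherence problem you flag never arises.
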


\begin{proof} We can suppose $\kappa>\omega$. First, note that any $\kappa$-complete graph $\gr$ is actually $|V|$-complete; thus it suffices to prove the theorem for $\kappa$-complete graphs of size $\kappa$. The next arguments will be reminiscent of the proof of Theorem \ref{halff}.

\begin{lclaim} \label{2of3} Suppose that $c$ is an $r$-edge colouring of $G$ with $r\in \omega$. Then  there is $A\in[V]^\kappa$ and $i<r$ so that $A$ is $\kappa$-connected in colour $i<r$ and satisfies $\utak \kappa i$ in $G\uhp A$ at the same time.
\end{lclaim}
\begin{proof}
Suppose there is no such $A$. By a finite induction, we construct sets $A_0\supseteq A_1 \supseteq \dots$ of size $\kappa$ and a 1-1 sequence $i_0,i_1,\dots$ in $r$ so that $A_k$ is $\kappa$-connected in colour $i_k$. 

Suppose $k=0$. As $G$ is of type $\halff \kappa$ with main class $V$ (see Observation \ref{comptype}) we can apply Claim \ref{clmC} with $I=\emptyset$ and $X=V$. We find a colour $i_0$ and a set $A_0$ of size $\kappa$ which is connected in colour $i_0$.

Suppose $k<r-1$ and we defined $A_{k}$. As $A_j$ must fail $\utak \kappa {i_j}$ in $G\uhp A_j$ for all $j\leq k$, we have $A^*_j\in [A_j]^{<\kappa}$ and $\lambda_j<\kappa$ such that there is no path $P$ in colour $i_j$ in $G\uhp (A_j\setm A^*_j)$ which is concentrated on $A_j$ and has order type $\lambda_j$. Let $A^*=\bigcup\{A_{j}^*:j\leq k\}$ and  $\lambda=\max\{\lambda_{j}:{j}\leq k\}$. Note that $H=G\uhp (A_k\setm A^*)$ is of type $\halff \kappa$ with main class $A_k\setm A^*$ and there is no $\lambda$-configuration in colours $I=\{i_{j}:j\leq k\}$ inside $H$. Indeed, otherwise Claim \ref{clmA} would imply that there is a path of type $\lambda$ in colour $i_{j}$ inside in $G\uhp (A_j\setm A^*_j)$ for some $j\leq k$ (recall that $A_k\setm A^*$ is $\sat \kappa$ in colour $i_j$ in $G\uhp (A_j\setm A^*_j$). Hence, Claim \ref{clmD} and \ref{clmC} implies that we can find a set $A_{k+1}\in [A_k]^\kappa$ and colour $i_{k+1}\in r\setm \{i_{j}:j\leq k\}$ so that $A_{k+1}$ is $\kappa$-connected in colour $i_{k+1}$.

Suppose we defined $A_{r-1}$. By assumption, $A_{r-1}$ fails $\utak \kappa i$ in $G\uhp A_{r-1}$  for all $i<r$. However, Theorem \ref{halff} implies the existence of a monochromatic path of size $\kappa$ in some colour $i<r$ which in turn implies that $\utak \kappa {i}$ must hold for some $i<r$ by Observation \ref{utakeq}.

\end{proof}


\begin{lclaim} \label{3of3} There are sets $A,Y\in [V]^\kappa$ and $i<r$ so that $Y\subseteq A$ and $A\setm Z$ satisfies all three conditions of Lemma \ref{3line} in colour $i$ in $G\uhp (A\setm Z)$ for all  $Z\subseteq Y$. Moreover, we can suppose that $A$ is a \emph{maximal} $\kappa$-connected subset.
\end{lclaim}
In particular, $A\setm Z$  is a single path of colour $i$ for \emph{every choice of $Z\subseteq Y$} by Lemma \ref{3line}. 

\begin{proof} This claim is proved by induction on $r$. If $r=1$ then let $A=V$ and let $Y\subseteq A$ such that $A\setm Y$ and $Y$ has size $\kappa$. Given $Z\subs Y$, we know that $G\uhp (A\setm Z)$ is $\kappa$-complete and hence of type $\halff \kappa$ with main class $A\setm Z$. Hence, by Observation \ref{baseobs}, $A\setm Z$  satisfies all three conditions of Lemma \ref{3line} in $G\uhp (A\setm Z)$.

Suppose that $r>1$. Now, we can suppose that any set $X\in [V]^\kappa$ satisfies condition (3) of Lemma \ref{3line} in $G\uhp X$ in \emph{all colours} $i<r$. Indeed, note that $G\uhp X$ is of type $\halff \kappa$ with main class $X$ and suppose $X$ fails condition (3) of Lemma \ref{3line} in $G\uhp X$ in some colour $i<r$. Now Lemma \ref{ml1} implies that there is $\tilde X\in [X]^{<\kappa}$ so that $X\setm \tilde X$ is covered by a subgraph $H$ of $G\uhp X$ of type $\halff \kappa$ with main class $X\setm \tilde X$ so that $i\notin \ran(c\uhp E(H))$. Without loss of generality $V(H)\cap \tilde X=\emptyset$ i.e. $V(H)$ is the main class of $H$. Hence Observation \ref{comptype} implies that we can find a $\kappa$-complete subgraph $G'$ in $H$; the inductive hypothesis can be applied to $G'$ as $i\notin \ran(c\uhp E(G'))$.

Now, take $A\in[V]^\kappa$  which is a maximal $\kappa$-connected subset in some colour $i<r$ and satisfies $\utak \kappa i$ in $G\uhp A$; this can be done by Claim \ref{2of3}. It is easy to see that we can find $Y\in [A]^\kappa$ so that $A\setm Z$ is still $\kappa$-connected in colour $i$ and satisfies $\utak \kappa i$ in $G\uhp (A\setm Z)$ for any $Z\subseteq Y$. Indeed, we construct $Y$ by an induction of length $\cf(\kappa)$: let $\{\kappa_\alpha:\alpha<\cf(\kappa)\}$ be a cofinal sequence of cardinals in $\kappa$ ($\kappa_\alpha=\lambda$ if $\kappa=\lambda^+$) and let $A_\alpha\in [A]^{\kappa_\alpha}$ increasing so that $A=\bigcup\{A_\alpha:\alpha<\cf(\kappa)\}$. Define sets $Y_\alpha\in [A]^{\kappa_\alpha}$, $W_\alpha\in [A]^{\kappa_\alpha}$ for $\alpha<\cf(\kappa)$ so that $Y_\alpha\cap W_\beta=\emptyset$ for all $\alpha,\beta< \cf(\kappa)$ and
\begin{enumerate}
	\item there are $\kappa_\alpha$ many disjoint paths of order type $\kappa_\alpha$ and colour $i$ in $G\uhp W_\alpha$ concentrated on $A$, and
	\item for any $u\neq v \in A_\alpha$, there are $\kappa_\alpha$ many disjoint paths of colour $i$ from $v$ to $u$ in $W_\alpha\cup\{u,v\}$.
\end{enumerate}
It is clear that $Y=\bigcup\{Y_\alpha:\alpha<\cf(\kappa)\}$ is as desired. As $A\setm Y$ satisfies (3) of Lemma \ref{3line}, we are done.

\end{proof}

Find $A,Y\subs V$ and $i<r$ as in Claim \ref{3of3} with $A$ being a maximal $\kappa$-connected subset in colour $i$. Let $X=V\setm A$. Let $H$ denote the bipartite subgraph of $G$ on classes $X,Y$ where $\{v,w\}\in E(H)$ iff $v\in Y,w\in X$ and $c(v,w)\neq i$. Note that $$|Y\setm N_H(x)|<\kappa \text{ for all }x\in X;$$ otherwise $ A\cup \{x\}$ is still $\kappa$-connected in colour $i$. 

If $K$ denotes the complete bipartite graph on classes $X,Y$ then $X$ is $\cent \emptyset \kappa$ in $K$. Furthermore, the subgraph $H$ of $K$ satisfies the conditions of Lemma \ref{cover} and hence there is a  finite $\mc A'$ so that $X$ is $\cent {\mc A'} \kappa$ in $H$. 

By Theorem \ref{centeredcover}, there is a set of disjoint monochromatic paths $\mc Q$ in $H$ which covers $X$; recall that $i\notin \ran(c\uhp E(H))$ and hence none of the paths in $\mc Q$ has colour $i$.  Note that $Z=(\cup \mc Q)\setm X\subseteq Y$ and hence $V\setm \cup \mc Q= A\setm Z$ satisfies all three conditions of Lemma \ref{3line} in colour $i$ in $G\uhp (V\setm \cup \mc Q)$. In particular, $V\setm \cup \mc Q$ is a single path $P$ in colour $i$ and hence $\mc Q\cup \{P\}$ is a decomposition of $V(G)$ into disjoint monochromatic paths of different colours.

\end{proof}

\section{Open problems}\label{probl_sec}

It is a natural question if one can extend our result to infinite complete bipartite graphs:

\begin{conj}\label{biconj} Suppose that the edges of an infinite complete bipartite graph are coloured with $r\in \omega$ colours. Then we can partition the vertices into $2r-1$ disjoint monochromatic paths.
\end{conj}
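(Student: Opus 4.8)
The plan is to run the strategy of Theorem~\ref{maindecomp} symmetrically in the two classes. First I would record the obstruction that forces balancedness: if a monochromatic path $P$ meets $B$ in a set of size $\mu$ with $\cf(\mu)>\oo$, then each high $\prec_P$-limit of $P$ has cofinally many neighbours below it, which forces the $A$-vertices of $P$ to be cofinal in $P$, so $|P\cap A|=\mu$ as well; hence a \emph{finite} partition into monochromatic paths can exist only when the two classes have equal size (the excess vertices of the larger class would have no available neighbours once the smaller class is exhausted, and could only be covered by singletons). So I reduce immediately to $G=K_{\kappa,\kappa}$, and, as in Theorem~\ref{maindecomp}, to $\kappa>\oo$.

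The target $2r-1=1+(r-1)+(r-1)$ suggests the following accounting: one spanning path of a single colour $i$ for a large ``core'', together with at most $r-1$ paths of colours $\ne i$ covering the leftover of each class. Concretely, mimicking Claim~\ref{2of3} (now using Claim~\ref{clmC} and Claim~\ref{clmD} on the bipartite colour graphs), I would find a colour $i$ and a \emph{maximal} pair $A_1\subseteq A$, $B_1\subseteq B$ of size $\kappa$ so that $A_1\cup B_1$ is $\sat{\kappa}$ in colour $i$ and satisfies $\utak{\kappa}{i}$; Theorem~\ref{halff} guarantees that some colour works. By maximality every $a\in A\setm A_1$ has fewer than $\kappa$ colour-$i$ neighbours in $B_1$, and symmetrically every $b\in B\setm B_1$ has fewer than $\kappa$ colour-$i$ neighbours in $A_1$ -- this is the bipartite, two-sided form of the maximal-connectivity fact used at the end of Theorem~\ref{maindecomp}.

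Next, as in Claim~\ref{3of3}, I would reserve inside the core two sets $Y_B\subseteq B_1$ and $Y_A\subseteq A_1$ of size $\kappa$, built by a length-$\cf(\kappa)$ induction with enough redundancy that, for every $Z_A\subseteq Y_A$ and $Z_B\subseteq Y_B$, the remainder $(A_1\cup B_1)\setm(Z_A\cup Z_B)$ still satisfies all three conditions of Lemma~\ref{3line} in colour $i$. Now consider the bipartite graph $H_A$ between $A\setm A_1$ and $Y_B$ with colour $i$ \emph{deleted}: since each leftover vertex lost fewer than $\kappa$ colour-$i$ edges, Lemma~\ref{cover} shows $A\setm A_1$ is still $\cent{\mc{A}'}{\kappa}$ in $H_A$, so Theorem~\ref{centeredcover} covers $A\setm A_1$ by at most $r-1$ disjoint monochromatic paths of distinct colours $\ne i$, consuming only vertices of $Y_B$. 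Symmetrically I cover $B\setm B_1$ by at most $r-1$ paths of colours $\ne i$, consuming only vertices of $Y_A$. Finally, writing $Z_A,Z_B$ for the consumed parts, the core $(A_1\cup B_1)\setm(Z_A\cup Z_B)$ is a single colour-$i$ path by Lemma~\ref{3line}; together these give $1+(r-1)+(r-1)=2r-1$ disjoint monochromatic paths covering $V$.

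The main obstacle -- and the reason the statement is only conjectural -- is the two-sided bookkeeping that keeps the bound at exactly $2r-1$. One must produce the core and the reserved sets $Y_A,Y_B$ so that the \emph{two} independent applications of Theorem~\ref{centeredcover} draw their $B$- and $A$-vertices from disjoint reservoirs while the core survives the removal of \emph{both} $Z_A$ and $Z_B$ as a single path; establishing the bipartite analogues of Claims~\ref{2of3} and~\ref{3of3} with this simultaneous, two-sided control is exactly the gap. A cruder version of the argument (covering one class, then the other, without sharing the core economically) already yields a partition into $2r$ paths, but shaving off the last path seems to require first settling the finite balanced bipartite case, where the value $2r-1$ originates.
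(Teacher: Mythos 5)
This statement is Conjecture~\ref{biconj}: the paper offers no proof of it. The only things the paper establishes in its direction are that Theorem~\ref{centeredcover} yields a cover of an infinite complete bipartite graph by $2r$ monochromatic paths that are \emph{not necessarily disjoint}, together with pointers to the finite case in \cite{Po2} and the countably infinite case in \cite{thesis}. So there is no proof in the paper to compare yours against, and you are right to present your argument as an attack on an open problem rather than a completed proof.

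As a sketch, your outline is a sensible transplant of the proof of Theorem~\ref{maindecomp}, and the gap you name is the genuine one: in the complete graph case the colour $i$ and the maximal $\kappa$-connected core come from the ``moreover'' clause of Claim~\ref{clmC}, which requires $B\subseteq X$ and therefore does not apply in a genuinely bipartite graph, where $\kappa$-connectedness of a pair $(A_1,B_1)$ in one colour needs simultaneous control of both sides. Verifying conditions (2) and (3) of Lemma~\ref{3line} for $A_1\cup B_1$ inside the induced bipartite graph, and keeping them stable under removal of \emph{both} reservoirs $Z_A$ and $Z_B$, is exactly the missing bipartite analogue of Claims~\ref{2of3} and~\ref{3of3}. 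Two smaller points. First, your closing claim that a cruder version ``already yields a partition into $2r$ paths'' overstates what the paper's machinery gives: the paper only asserts a non-disjoint \emph{cover} by $2r$ paths, and upgrading that to a partition already runs into the same disjointness bookkeeping. Second, your preliminary observation that the two classes must have equal cardinality is correct and worth keeping: in a bipartite graph every infinite Rado path meets each class in a set of the same cardinality as the path (the earlier neighbours of any vertex lie in the opposite class and are cofinal below it, so both traces meet every $\omega$-block), so finitely many paths cannot exhaust an unbalanced $K_{\kappa,\lambda}$, and the conjecture must be read as concerning $K_{\kappa,\kappa}$.
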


Note that Theorem \ref{centeredcover} implies that we can find a cover (not necessarily disjoint) by $2r$ monochromatic paths. Conjecture \ref{biconj} appeared for finite graphs in \cite{Po2} and is proved for the countably infinite case in \cite{thesis}. \\


One can consider the monochromatic path decomposition problem when the edges of the complete graph are coloured with infinitely many colours. There is a simple limitation of proving a monochromatic path decomposition theorem, namely one might not be able to decompose the vertices into sets so that each set is connected in some colour. This problem was investigated by A. Hajnal, P. Komj\'ath, L. Soukup and I. Szalkai in \cite{conn_decomp}.

Let us remain in the realm of $\omega$-colourings for now. A possible first step towards a general result could be looking at the following Ramsey-theoretic problem: let $\mb P$ denote the class of cardinals $\kappa$ such that for every edge colouring $c:[\kappa]^2\to \omega$ of $K_\kappa$ there is a monochromatic path of size $\kappa$.  It is easy to colour the edges of $K_\omg$ with $\omega$ colours without monochromatic cycles and hence $\omega_1\notin \mb P$. Furthermore, note that if $\kappa$ satisfies the partition relation $\kappa\to (\kappa)^2_\omega$ then $\kappa\in \mb P$ hence many large cardinals are in $\mb P$.

\begin{prob} Can we prove that $\mb P$ is non empty in ZFC? If so, what is $\min \mb P$? 
\end{prob}

 $\omega_2$ or $\mathfrak{c}^+$ seem to be natural candidates for $\min \mb P$. 

\bibliographystyle{plain}
\bibliography{thesis}

\end{document}